\definecolor{myurlcolor}{rgb}{0.6,0,0}
\definecolor{mycitecolor}{rgb}{0,0,0.9}
\definecolor{myrefcolor}{rgb}{0,0,0.9}
\newcommand{\catname}[1]{\mathsf{#1}}
\newcommand{\category}[1]{\mathsf{#1}}
\let\hom\relax
\DeclareMathOperator{\hom}{hom}
\newcommand{\maps}{\colon}
\newcommand{\n}{\mathbf{n}}
\newcommand{\m}{\mathbf{m}}
\newcommand{\SC}{{\S(C)}}
\newcommand{\Ob}{\mathrm{Ob}}
\newcommand{\E}{\mathcal{E}}
\newcommand{\N}{\mathbb{N}}
\newcommand{\R}{\mathbb{R}}
\newcommand{\Boole}{\mathbb{B}}
\newcommand{\G}{\Gamma}
\newcommand{\SG}{\mathrm{SG}}
\newcommand{\DG}{\mathrm{DG}}
\newcommand{\MG}{\mathrm{MG}}
\newcommand{\MGplus}{{\mathrm{MG}^{+}}}
\newcommand{\DMG}{\mathrm{DMG}}
\newcommand{\HG}{\mathrm{HG}}
\newcommand{\Op}{\mathbf{Op}} 
\newcommand{\define}[1]{{\bf \boldmath{#1}}}
\renewcommand{\O}{\mathrm{O}}
\newcommand{\A}{\category A}
\newcommand{\B}{\category B}
\newcommand{\C}{\category C}
\newcommand{\D}{\category D}
\newcommand{\NM}{\catname{NetMod}}
\newcommand{\Mon}{\catname{Mon}}
\renewcommand{\S}{\catname{S}}
\newcommand{\Set}{\catname{Set}}
\newcommand{\Inj}{\catname{Inj}}
\newcommand{\Cat}{\catname{Cat}}
\newcommand{\CAT}{\catname{CAT}}
\newcommand{\MC}{\catname{mCat}}     
\newcommand{\BMC}{\catname{bmCat}}   
\newcommand{\SMC}{\catname{smCat}}   
\newcommand{\SSMC}{\catname{ssmCat}} 
\renewcommand{\m}{\catname{m}}     
\newcommand{\sm}{\catname{sm}}   
\newcommand{\bm}{\catname{bm}} 
\newcommand{\Fib}{\catname{Fib}}     
\newcommand{\ICat}{\catname{ICat}}   
\newcommand{\MICat}{\catname{mICat}} 
\newcommand{\BMICat}{\catname{bmICat}} 
\newcommand{\SMICat}{\catname{smICat}}  
\newcommand{\SSMICat}{\catname{ssmICat}}  
\newcommand{\twocat}{\mathcal{C}}
\newcommand{\Int}{\textstyle{\int}}
\newcommand{\inv}{^{-1}}
\DeclareMathOperator{\op}{op}
\newcommand{\To}{\Rightarrow}
\newcommand{\spl}{\mathbf{split}}
\newcommand{\Ghat}{\widehat G}
\newcommand{\Gahat}{\widehat \Gamma}
\newcommand{\gahat}{\widehat \gamma}
\newcommand{\gn}{\mathfrak g}
\newcommand{\CN}{\mathrm{O}}
\newcommand*\pgfdeclareanchoralias[3]{%
  \expandafter\def\csname pgf@anchor@#1@#3\expandafter\endcsname
     \expandafter{\csname pgf@anchor@#1@#2\endcsname}}
\tikzset{
    circnode/.style={
      circle, draw=red, very thin, outer sep=0.025em, minimum size=2em,
      fill=red, text centered},
    integral/.style={
      circle, draw=black, very thick, outer sep=0.025em,
      minimum size=2em, fill=blue!5, text centered},
    multiply/.style={
      circle, draw=black, very thick, outer sep=0.025em,
      minimum size=2em, fill=blue!5, text centered},
    zero/.style={
      circle, draw=black, very thick, minimum size=0.15cm, fill=black,
      inner sep=0, outer sep=0},
    bang/.style={
      circle, draw=black, very thick, minimum size=0.15cm, fill=green!10,
      inner sep=0, outer sep=0},
    delta/.style={
      regular polygon, regular polygon sides=3, minimum size=0.4cm, inner
      sep=0, outer sep=0.025em, draw=black, very thick, fill=green!10},
    codelta/.style={
      regular polygon, regular polygon sides=3, shape border rotate=180, minimum size=0.4cm,
      inner sep=0, outer sep=0.025em, draw=black, very thick, fill=green!10},
    plus/.style={
      regular polygon, regular polygon sides=3, shape border rotate=180, minimum size=0.4cm,
      inner sep = 0, outer sep=0.025em, draw=black, very thick, fill=black},
    coplus/.style={
      regular polygon, regular polygon sides=3, minimum size=0.4cm,
      inner sep = 0, outer sep=0.025em, draw=black, very thick, fill=black},
    sqnode/.style={
      regular polygon, regular polygon sides=4, minimum size=2.6em,
      draw=black, very thick, inner sep=0.2em, outer sep=0.025em,
      fill=yellow!10, text centered},
    bigcirc/.style={
      circle, draw=black, very thick, text width=1.6em, outer sep=0.025em,
      minimum height=1.6em, fill=blue!5, text centered}
}
\tikzstyle{tri}=[regular polygon,regular polygon sides=3,shape border rotate=1
\definecolor {processblue}{cmyk}{0.9,0.5,0,0}
\tikzstyle{simple}=[-,line width=2.000]
\tikzstyle{arrow}=[-,postaction={decorate},decoration={markings,mark=at position .5 with {\arrow{>}}},line width=1.100]
\tikzstyle{none}=[inner sep=-1pt]
\tikzstyle{species}=[circle,fill=none,draw=black,scale=0.75]
\tikzstyle{transition}=[rectangle,fill=none,draw=black,scale=1.15]
\tikzstyle{empty}=[circle,fill=none, draw=none]
\tikzstyle{inputdot}=[circle,fill=black,draw=black, scale=.5]
\tikzstyle{dot}=[circle,fill=black,draw=black]
\tikzstyle{bounding}=[circle,dashed, fill=none,draw=black, scale=9.00]
\tikzstyle{triplebounding}=[circle,dashed, fill=none,draw=black, scale=30.00]
\tikzstyle{simple}=[-,draw=black,line width=1.000]
\tikzstyle{inarrow}=[-,draw=black,postaction={decorate},decoration={markings,mark=at position .5 with {\arrow{>}}},line width=1.000]
\tikzstyle{tick}=[-,draw=black,postaction={decorate},decoration={markings,mark=at position .5 with {\draw (0,-0.1) -- (0,0.1);}},line width=1.000]
\tikzstyle{inputarrow}=[->,draw=black, shorten >=.05cm]
\newcommand{\beq}{\begin{equation}}
\newcommand{\eeq}{\end{equation}}
\newcommand{\beqa}{\begin{eqnarray}}
\newcommand{\eeqa}{\end{eqnarray}}
\theoremstyle{plain}
\newtheorem{thm}{Theorem}
\newtheorem{lem}[thm]{Lemma}
\newtheorem{prop}[thm]{Proposition}
\theoremstyle{remark}
\theoremstyle{definition}
\newtheorem{defn}[thm]{Definition}
\newtheorem{ex}[thm]{Example}
\begin{document}

\title{Network Models}
\author{
\begin{tabular}{cccc}
\small John C.\ Baez\footnote{Department of Mathematics, University of California, Riverside CA, 92521, USA}{ }\footnote{Centre for Quantum Technologies, National University of Singapore, 117543, Singapore} & \small John Foley\footnote{Metron, Inc., 1818 Library St., Suite 600, Reston, VA 20190, USA} & \small Joe Moeller$^*$ & \small Blake S.\ Pollard\footnote{NIST, Mail Stop 8970, 100 Bureau Drive, Gaithersburg, MD 20899, USA}
\\
\small baez@math.ucr.edu & \small foley@metsci.com & \small moeller@math.ucr.edu & \small 
blake.pollard@nist.gov
\end{tabular}
}

\date{\today}

\maketitle 

\begin{abstract}
    \noindent Networks can be combined in various ways, such as overlaying one on top of another or setting two side by side. We introduce `network models' to encode these ways of combining networks. Different network models describe different kinds of networks. We show that each network model gives rise to an operad, whose operations are ways of assembling a network of the given kind from smaller parts. Such operads, and their algebras, can serve as tools for designing networks. Technically, a network model is a lax symmetric monoidal functor from the free symmetric monoidal category on some set to $\Cat$, and the construction of the corresponding operad proceeds via a symmetric monoidal version of the Grothendieck construction.
\end{abstract}

\section{Introduction}
\label{sec:intro}

In this paper we study operads suited for designing networks. These could be networks where the vertices represent fixed or moving agents and the edges represent communication channels. More generally, they could be networks where the vertices represent entities of various types, and the edges represent relationships between these entities, e.g.\ that one agent is committed to take some action involving the other. This paper arose from an example where the vertices represent planes, boats and drones involved in a search and rescue mission in the Caribbean \cite{CommNet, CompTask}. However, even for this one example, we want a flexible formalism that can handle networks of many kinds, described at a level of detail that the user is free to adjust.

To achieve this flexibility, we introduce a general concept of `network model'. Simply put, a network model is a \emph{kind} of network. Any network model gives an operad whose operations are ways to build larger networks of this kind by gluing smaller ones. This operad has a `canonical' algebra where the operations act to assemble networks of the given kind. But it also has other algebras, where it acts to assemble networks of this kind \emph{equipped with extra structure and properties}. This flexibility is important in applications. 

What exactly is a `kind of network'?  At the crudest level, we can model networks as simple graphs. If the vertices are agents of some sort and the edges represent communication channels, this means we allow at most one channel between any pair of agents. However, simple graphs are too restrictive for many applications. If we allow multiple communication channels between a pair of agents, we should replace simple graphs with `multigraphs'. Alternatively, we may wish to allow directed channels, where the sender and receiver have different capabilities: for example, signals may only be able to flow in one direction. This requires replacing simple graphs with `directed graphs'. To combine these features we could use `directed multigraphs'. It is also important to consider graphs with colored vertices, to specify different types of agents, and colored edges, to specify different types of channels. This leads us to `colored directed multigraphs'. All these are examples of what we mean by a `kind of network'. Even more complicated kinds, such as hypergraphs or Petri nets, are likely to become important as we proceed. 
Thus, instead of separately studying all these kinds of networks, we introduce a unified notion that subsumes all these variants: a `network model'. Namely, given a set $C$ of `vertex colors', a \define{network model} $F$ is a lax symmetric monoidal functor $F \maps \S(C) \to \Cat$, where $\S(C)$ is the free strict symmetric monoidal category on $C$ and $\Cat$ is the category of small categories, considered with its cartesian monoidal structure. Unpacking this definition takes a little work. It simplifies in the special case where $F$ takes values in $\Mon$, the category of monoids. It simplifies further when $C$ is a singleton, since then $\S(C)$ is the groupoid $\S$, where objects are natural numbers and morphisms from $m$ to $n$ are bijections $\sigma \maps \{1,\dots,m\} \to \{1,\dots,n\}$.
If we impose both these simplifying assumptions, we have what we call a \define{one-colored network model}: a lax symmetric monoidal functor $F \maps \S \to \Mon$. 
As we shall see, the network model of simple graphs is a one-colored network model, and so are many other motivating examples.

Joyal began an extensive study of functors $F \maps \S \to \Set$, which are now commonly called `species' \cite{BLL,Joyal,Joyal2}.
Any type of extra structure that can be placed on finite sets and transported along bijections defines a species if we take $F(n)$ to be the set of structures that can be placed on the set $\{1, \dots, n\}$. From this perspective, a one-colored network model is a species with some extra operations. 

This perspective is helpful for understanding what a
one-colored network model $F \maps \S \to \Mon$ is actually like. If we call elements of $F(n)$ `networks with $n$ vertices', then:
\begin{enumerate}
    \item Since $F(n)$ is a monoid, we can \define{overlay} two networks with the same number of vertices and get a new one. We denote this operation by
    \[
        \cup \colon F(n) \times F(n) \to F(n) . 
    \]
    For example:
    \[\scalebox{0.8}{
    \begin{tikzpicture}
    	\begin{pgfonlayer}{nodelayer}
    	    \node [style=none, scale = 1.2] () at (5,2) {$\cup$};
    	    \node [style=none, scale = 1.2] () at (9,2) {=};
    		\node [style=species]  (1) at (3.75, 2.75) {2};
    		\node [style=species]  (2) at (2.25, 2.75) {1};
    		\node [style=species]  (3) at (2.25, 1.25) {4};
    		\node [style=species]  (4) at (3.75, 1.25) {3};
    		\node [style=species]  (5) at (7.75, 2.75) {2};
    		\node [style=species]  (6) at (6.25, 2.75) {1};
    		\node [style=species]  (7) at (6.25, 1.25) {4};
    		\node [style=species]  (8) at (7.75, 1.25) {3};
    		\node [style=species]  (9) at (11.75, 2.75) {2};
    		\node [style=species]  (10) at (10.25, 2.75) {1};
    		\node [style=species]  (11) at (10.25, 1.25) {4};
    		\node [style=species]  (12) at (11.75, 1.25) {3};
    	\end{pgfonlayer}
    	\begin{pgfonlayer}{edgelayer}
    		\draw [style=simple] (2) to (1);
    		\draw [style=simple] (3) to (4);
    		\draw [style=simple] (6) to (5);
    		\draw [style=simple] (5) to (7);
    		%
    		\draw [style=simple] (10) to (9);
    		\draw [style=simple] (9) to (11);
    		\draw [style=simple] (11) to (12);
    	\end{pgfonlayer}
    \end{tikzpicture}}\]
     \item Since $F$ is a functor, the group $S_n$ acts on the monoid $F(n)$. Thus, for each $\sigma \in S_n$, we have a monoid automorphism that we call
    \[\sigma \maps F(n) \to F(n)  . \]
    For example, if $\sigma = (2\,3) \in S_3$, then
    \[\scalebox{0.8}{
    \begin{tikzpicture}
    	\begin{pgfonlayer}{nodelayer}
    	    \node [style=none, scale = 1.2] () at (5,2) {$\sigma\maps$};
    	    \node [style=none, scale = 1.2] () at (9,2) {$\mapsto$};
    		\node [style=species]  (1) at (7.75, 2.75) {2};
    		\node [style=species]  (2) at (6.25, 2.75) {1};
    		\node [style=species]  (3) at (7, 1.25) {3};
    		\node [style=species]  (13) at (11.75, 2.75) {2};
    		\node [style=species]  (14) at (10.25, 2.75) {1};
    		\node [style=species]  (15) at (11, 1.25) {3};
    	\end{pgfonlayer}
    	\begin{pgfonlayer}{edgelayer}
    		\draw [style=simple] (2) to (1);
    		\draw [style=simple] (1) to (3);
    		\draw [style=simple] (14) to (15);
    		\draw [style=simple] (15) to (13);
    	\end{pgfonlayer}
    \end{tikzpicture}}\]
    \item Since $F$ is lax monoidal, we have an operation
    \[
        \sqcup \colon F(m) \times F(n) \to F(m+n)
    \]
    We call this operation the \define{disjoint union} of networks. For example:
    \[\scalebox{0.8}{
    \begin{tikzpicture}
    	\begin{pgfonlayer}{nodelayer}
    	    \node [style=none, scale = 1.2] () at (5,2) {$\sqcup$};
    	    \node [style=none, scale = 1.2] () at (9,2) {=};
    		\node [style=species]  (1) at (3.75, 2.75) {2};
    		\node [style=species]  (2) at (2.25, 2.75) {1};
    		\node [style=species]  (3) at (3, 1.25) {3};
    		\node [style=species]  (5) at (7.75, 2.75) {2};
    		\node [style=species]  (6) at (6.25, 2.75) {1};
    		\node [style=species]  (7) at (6.25, 1.25) {4};
    		\node [style=species]  (8) at (7.75, 1.25) {3};
    		\node [style=species]  (9) at (14.75, 2.75) {5};
    		\node [style=species]  (10) at (13.25, 2.75) {4};
    		\node [style=species]  (11) at (13.25, 1.25) {7};
    		\node [style=species]  (12) at (14.75, 1.25) {6};
    		\node [style=species]  (13) at (11.75, 2.75) {2};
    		\node [style=species]  (14) at (10.25, 2.75) {1};
    		\node [style=species]  (15) at (11, 1.25) {3};
    	\end{pgfonlayer}
    	\begin{pgfonlayer}{edgelayer}
    		\draw [style=simple] (2) to (1);
    		\draw [style=simple] (1) to (3);
    		\draw [style=simple] (6) to (5);
    		\draw [style=simple] (5) to (7);
    		\draw [style=simple] (7) to (8);
    		\draw [style=simple] (10) to (9);
    		\draw [style=simple] (9) to (11);
    		\draw [style=simple] (11) to (12);
    		\draw [style=simple] (14) to (13);
    		\draw [style=simple] (15) to (13);
    	\end{pgfonlayer}
    \end{tikzpicture}}\]
\end{enumerate}
The first two operations are present whenever we have a 
functor $F \maps \S \to \Mon$. The last two are present whenever we have a lax symmetric monoidal functor $F \maps \S \to \Set$. When $F$ is a one-colored network model we have all three---and unpacking the definition further, we see that they obey some equations, which we list in Theorem \ref{thm:equations}. For example, the `interchange law' 
\[(g \cup g') \sqcup (h \cup h') = (g \sqcup h) \cup (g' \sqcup h') \]
holds whenever $g,g' \in F(m)$ and $h, h' \in F(n)$.

In Section \ref{sec:netmod} we study one-colored network models more formally, and give many examples. In Section \ref{sec:models_from_monoids} we describe a systematic procedure for getting one-colored network models from monoids. In Section \ref{sec:netmod_C} we study general network models and give examples of these. In Section \ref{sec:cat_netmod} we describe a category $\NM$ of network models, and show that the procedure for getting network models from monoids is functorial. We also make $\NM$ into a symmetric monoidal category, and give examples of how to build new networks models by tensoring old ones.

Our main result is that any network model gives a typed operad, also known as a `colored operad' or `symmetric multicategory' \cite{Yau}. A typed operad describes ways of sticking together things of various types to get new things of various types. An algebra of the operad gives a particular specification of these things and the results of sticking them together. A bit more precisely, a typed operad $O$ has:
\begin{itemize}
    \item a set $T$ of \define{types},
    \item sets of \define{operations} $O(t_1,...,t_n ; t)$ where $t_i, t \in T$,
    \item ways to \define{compose} any operation
    \[f \in O(t_1,\dots,t_n ;t) \]
    with operations 
    \[g_i \in O(t_{i1},\dots,t_{i k_i}; t_i)   \qquad (1 \le i \le n) \] 
    to obtain an operation 
    \[f \circ (g_1,\dots,g_n) \in O(t_{1i}, \dots, t_{1k_1}, \dots, 
    t_{n1}, \dots t_{n k_n}; t), \]
    \item and ways to permute the arguments of operations,
    \end{itemize}
    which obey some rules \cite{Yau}. An algebra $A$ of $O$ specifies a set $A(t)$ for each type $t \in T$ such that the operations of $O$ act on these sets. Thus, it has:
    \begin{itemize}
    \item for each type $t \in T$, a set $A(t)$ of \define{things} of type $t$,
    \item ways to \define{apply} any operation
    \[f \in O(t_1, \dots, t_n ; t) \]
    to things
    \[a_i \in A(t_i)  \qquad (1 \le i \le n) \]
    to obtain a thing
    \[\alpha(f)(a_1, \dots, a_n) \in A(t). \]
\end{itemize}
Again, we demand that some rules hold \cite{Yau}. 

 In Thm.\ \ref{thm:one-colored_network_operads} we describe the typed operad $\CN_F$ arising from a one-colored network model $F$. The set of types is $\N$, since we can think of `network with $n$ vertices' as a type. The sets of operations are given as follows:
\[\CN_F(n_1, \dots, n_k; n) = \left\{ 
\begin{array}{cl}  S_n \times F(n) & \textrm{if } 
n_1 + \cdots + n_k = n \\
\emptyset & \textrm{otherwise.} 
\end{array}  \right. \]
The key idea here is that we can overlay a network in $F(n)$ on the disjoint union of networks with $n_1, \dots, n_k$ vertices and get a new network with $n$ vertices as long as $n_1 + \cdots n_k = n$. We can also permute the vertices; this accounts for the group $S_n$. But the most important fact is that \emph{networks serve as operations to assemble networks}, thanks to our ability to overlay them.

Using this fact, we show in Ex.\ \ref{ex:canonical_algebra} that the operad $\CN_F$ has a canonical algebra $A_F$ whose elements are simply networks of the kind described by $F$:
\[A_F(n) = F(n) .\]
In this algebra any operation
\[(\sigma,g) \in  \CN_F(n_1, \dots , n_k; n) = 
S_n \times F(n) \] 
acts on a $k$-tuple of networks
\[h_i \in A_F(n_i) = F(n_i)   \qquad (1 \le i \le k) \]
to give the network
\[\alpha(\sigma,g)(h_1, \dots, h_k) =  g \cup \sigma(h_1 \sqcup \cdots \sqcup h_k) \in A_F(n) .\]
In other words, we first take the disjoint union of the networks $h_i$, then permute their vertices with $\sigma$, and then overlay the network $g$.

An example is in order, since the generality of the formalism may hide the simplicity of the idea. The easiest example of our theory is the network model for simple graphs. In Ex.\ \ref{ex:simple_graph} we describe a one-colored network model $\SG \maps \S \to \Mon$ such that $\SG(n)$ is the collection of simple graphs with vertex set $\n = \{1,\dots,n\}$. Such a simple graph is really a collection of 2-element subsets of $\n$, called `edges'. Thus, we may overlay simple graphs $g,g' \in \SG(n)$ by taking their union $g \cup g'$. This operation makes $\SG(n)$ into a monoid. 

Now consider an operation $f \in \CN_\SG(3,4,2;9)$. This is an element of $S_9 \times \SG(9)$: a permutation of the set $\{1,\dots, 9\}$ together with a simple graph having this set of vertices. If we take the permutation to be the identity for simplicity, this operation is just a simple graph $g \in \SG(9)$. We can draw an example as follows:
\[\scalebox{0.8}{
\begin{tikzpicture}
	\begin{pgfonlayer}{nodelayer}
		\node [style=species] (0) at (2, 1) {$3$};
		\node [style=species] (1) at (4.75, -3.25) {$9$};
		\node [style=species] (2) at (7.5, 2.5) {$5$};
		\node [style=species] (3) at (1, 2.5) {$1$};
		\node [style=none] (4) at (0, 3) {};
		\node [style=none] (5) at (7, -2.5) {};
		\node [style=species] (6) at (4.75, -1.75) {$8$};
		\node [style=bounding] (7) at (4.75, -2.45) {};
		\node [style=bounding] (8) at (2, 2) {};
		\node [style=none] (9) at (8.75, 3) {};
		\node [style=species] (10) at (6.25, 2.5) {$4$};
		\node [style=species] (11) at (6.25, 1) {$6$};
		\node [style=bounding] (12) at (7, 1.75) {};
		\node [style=species] (13) at (3, 2.5) {$2$};
		\node [style=species] (14) at (7.5, 1) {$7$};
	\end{pgfonlayer}
	\begin{pgfonlayer}{edgelayer}
		\draw [style=simple] (0) to (11);
		\draw [style=simple] (3) to (13);
	\end{pgfonlayer}
\end{tikzpicture}
}\]
The dashed circles indicate that we are thinking of this simple graph as an element of $\CN(3,4,2;9)$: an operation that can be used to assemble simple graphs with 3, 4, and 2 vertices, respectively, to produce one with 9 vertices.

Next let us see how this operation acts on the canonical algebra $A_\SG$, whose elements are simple graphs. Suppose we have elements $a_1 \in A_\SG(3)$, $a_2 \in A_\SG(4)$ and $a_3 \in A_\SG(2)$:
\[
\scalebox{0.8}{
\begin{tikzpicture}
	\begin{pgfonlayer}{nodelayer}
		\node [style=species] (0) at (2, 1) {$3$};
		\node [style=species] (1) at (4.75, -3.25) {$2$};
		\node [style=species] (2) at (7.5, 2.5) {$2$};
		\node [style=species] (3) at (1, 2.5) {$1$};
		\node [style=none] (4) at (0, 3) {};
		\node [style=none] (5) at (7, -2.5) {};
		\node [style=species] (6) at (4.75, -1.75) {$1$};
		\node [style=bounding] (7) at (4.75, -2.45) {};
		\node [style=bounding] (8) at (2, 2) {};
		\node [style=none] (9) at (8.75, 3) {};
		\node [style=species] (10) at (6.25, 2.5) {$1$};
		\node [style=species] (11) at (6.25, 1) {$3$};
		\node [style=bounding] (12) at (7, 1.75) {};
		\node [style=species] (13) at (3, 2.5) {$2$};
		\node [style=species] (14) at (7.5, 1) {$4$};
	\end{pgfonlayer}
	\begin{pgfonlayer}{edgelayer}
		\draw [style=simple] (13) to (0);
		\draw [style=simple] (10) to (2);
		\draw [style=simple] (2) to (11);
		\draw [style=simple] (11) to (14);
		\draw [style=simple] (6) to (1);
	\end{pgfonlayer}
\end{tikzpicture}
}
\]
We can act on these by the operation $f$ to obtain $\alpha(f)(a_1,a_2,a_3) \in A_\SG(9)$. It looks like this:
\[\scalebox{0.8}{
\begin{tikzpicture}
	\begin{pgfonlayer}{nodelayer}
		\node [style=species] (0) at (2, 1) {$3$};
		\node [style=species] (1) at (4.75, -3.25) {$9$};
		\node [style=species] (2) at (7.5, 2.5) {$5$};
		\node [style=species] (3) at (1, 2.5) {$1$};
		\node [style=none] (4) at (0, 3) {};
		\node [style=none] (5) at (7, -2.5) {};
		\node [style=species] (6) at (4.75, -1.75) {$8$};
		\node [style=none] (9) at (8.75, 3) {};
		\node [style=species] (10) at (6.25, 2.5) {$4$};
		\node [style=species] (11) at (6.25, 1) {$6$};
		\node [style=species] (13) at (3, 2.5) {$2$};
		\node [style=species] (14) at (7.5, 1) {$7$};
		\node [style=triplebounding] (15) at (4.25, .55) {};
	\end{pgfonlayer}
	\begin{pgfonlayer}{edgelayer}
		\draw [style=simple] (3) to (13);
		\draw [style=simple] (13) to (0);
		\draw [style=simple] (10) to (2);
		\draw [style=simple] (0) to (11);
		\draw [style=simple] (2) to (11);
		\draw [style=simple] (11) to (14);
		\draw [style=simple] (6) to (1);
	\end{pgfonlayer}
\end{tikzpicture}}\]
We have simply taken the disjoint union of $a_1$, $a_2$, and $a_3$ and then overlaid $g$, obtaining a simple graph with 9 vertices.

The canonical algebra is one of the simplest algebras of the operad $O_\SG$. We can define many more interesting algebras for this operad. For example, we might wish to use this operad to describe communication networks where the communicating entities have locations and the communication channels have limits on their range. To include location data, we can choose $A(n)$ for $n \in \N$ to be the set of all graphs with $n$ vertices where each vertex is an actual point in the plane $\R^2$. To handle range-limited communications, we could instead choose $A(n)$ to be the set of all graphs with $n$ vertices in $\R^2$ where an edge is permitted between two vertices only if their Euclidean distance is less than some specified value. This still gives a well-defined algebra: when we apply an operation, we simply omit those edges from the resulting graph that would violate this restriction.

Besides the plethora of interesting algebras for the operad $O_\SG$, and useful homomorphisms between these, one can also modify the operad by choosing another network model. This provides additional flexibility in the formalism. Different network models give different operads, and the construction of operads from network models is functorial, so morphisms of network models give morphisms of operads. 

The technical heart of our paper is Section \ref{sec:Grothendieck}, which provides the machinery to construct operads from network models in a functorial way. This section is of independent interest, because it describes enhancements of the well-known `Grothendieck construction' of the category of elements $\Int F$ of a functor $F \maps C \to \Cat$, where $C$ is any small category. For example, suppose $C$ is symmetric monoidal and $F \maps C \to \Cat$ is lax symmetric monoidal, where we give $\Cat$ its cartesian symmetric monoidal structure. Then we show $\Int F$ is symmetric monoidal. Moreover, we show that the construction sending the lax symmetric monoidal functor $F$ to the symmetric monoidal category $\Int F$ is functorial. 

In Section \ref{sec:operads} we apply this machinery to build operads from network models. In Section \ref{sec:algebras} we describe some algebras of these operads, and in Ex.\ \ref{ex:range_limit_algebra} we discuss an algebra whose elements are networks of range-limited communication channels. In future work we plan to give many more detailed examples, and to explain how these algebras, and the homomorphisms between them, can be used to design and optimize networks.

\section{One-colored network models}
\label{sec:netmod}

We begin with a special class of network models: those where the vertices of the network have just one color. To define these, we use $\S$ to stand for a skeleton of the groupoid of finite sets and bijections:

\begin{defn}
    Let $\S$, the \define{symmetric groupoid}, be the groupoid for which:
    \begin{itemize}
        \item objects are natural numbers $n \in \N$,
        \item a morphism from $m$ to $n$ is a bijection $\sigma \colon \{1,\dots,m\} \to
        \{1,\dots,n\}$
    \end{itemize}
    and bijections are composed in the usual way.
\end{defn}

\noindent
There are no morphisms in $\S$ from $m$ to $n$ unless $m = n$. For each $n \in \N$, the morphisms $\sigma \maps n \to n$ form the symmetric group $S_n$. It is convenient to write $\n$ for the set $\{1,\dots,n\}$, so that a morphism $\sigma \maps n \to n$ in $\S$ is the same as a bijection $\sigma \maps \n \to \n$.

There is a functor $+ \maps \S \times \S \to \S$ defined as follows. Given $m, n \in \N$ we let $m + n$ be the usual sum, and given $\sigma \in S_m$ and $\tau \in S_n$, let $\sigma+\tau \in S_{m+n}$ be as follows:
\begin{equation}
\label{eq:plus}
    (\sigma + \tau)(j)=
    \begin{cases}
        \sigma(j)&\text{if } j\leq m
        \\\tau(j-m)+m&\text{otherwise.}
    \end{cases}
\end{equation}
For objects $m, n \in \S$, let $B_{m,n}$ be the block permutation of $m+n$ which swaps the first $m$ with the last $n$. For example $B_{4,3} \maps 7 \to 7$ is the permutation $(1473625)$:
\[\begin{tikzpicture}
	\begin{pgfonlayer}{nodelayer}
		\node [style=empty] (1) at (1, 1.5) {};
		\node [style=empty] (2) at (1.5, 1.5) {};
		\node [style=empty] (3) at (2, 1.5) {};
		\node [style=empty] (4) at (2.5, 1.5) {};
		\node [style=empty] (5) at (3, 1.5) {};
		\node [style=empty] (6) at (3.5, 1.5) {};
		\node [style=empty] (7) at (4, 1.5) {};
		\node [style=empty] (1a) at (1, 0) {};
		\node [style=empty] (2a) at (1.5, 0) {};
		\node [style=empty] (3a) at (2, 0) {};
		\node [style=empty] (4a) at (2.5, 0) {};
		\node [style=empty] (5a) at (3, 0) {};
		\node [style=empty] (6a) at (3.5, 0) {};
		\node [style=empty] (7a) at (4, 0) {};
	\end{pgfonlayer}
	\begin{pgfonlayer}{edgelayer}
		\draw [style=simple] (1.center) to (4a.center);
		\draw [style=simple] (2.center) to (5a.center);
		\draw [style=simple] (3.center) to (6a.center);
		\draw [style=simple] (4.center) to (7a.center);
		\draw [style=simple] (5.center) to (1a.center);
		\draw [style=simple] (6.center) to (2a.center);
		\draw [style=simple] (7.center) to (3a.center);
	\end{pgfonlayer}
\end{tikzpicture} \]
The tensor product $+$ and braiding $B$ give $\S$ the structure of a strict symmetric monoidal category. This follows as a special case of Prop.\ \ref{prop:free}. 

\begin{defn}
\label{defn:network_model}
    A \define{one-colored network model} is a lax symmetric monoidal functor 
    \[F \maps  \S \to \Mon .\]
    Here $\Mon$ is the category with monoids as objects and monoid homomorphisms as morphisms, considered with its cartesian monoidal structure.
\end{defn}
\noindent For lax symmetric monoidal functors, see Mac Lane \cite{Mac Lane}, who however calls them just symmetric monoidal functors. 

Many examples of network models are given below.  A pedestrian way to verify that these examples really are network models is to use the following result:

\begin{thm}
\label{thm:equations}
    A one-colored network model $F \maps \S \to \Mon$ is the same as:
    \begin{itemize}
        \item a family of sets $\{F(n)\}_{n\in \N}$
        \item distinguished \define{identity} elements $e_n \in F(n)$
        \item a family of \define{overlaying} functions $\cup \maps F(n) \times F(n) \to F(n)$
        \item a bijection $\sigma \maps F(n) \to F(n)$ for each $\sigma \in S_n$
        \item a family of \define{disjoint union} functions $\sqcup \maps F(m) \times F(n) \to F(m+n)$
    \end{itemize}

    satisfying the following equations:
    \begin{enumerate}
        \item $e_n \cup g = g \cup e_n = g$
        \item $g_1 \cup (g_2 \cup g_3) = (g_1 \cup g_2) \cup g_3$
        \item $\sigma(g_1 \cup g_2) = \sigma g_1 \cup \sigma g_2$
        \item $\sigma e_n = e_n$
        \item $(\sigma_2 \sigma_1) g = \sigma_2 (\sigma_1 g)$
        \item $1 (g) = g$
        \item $(g_1 \cup g_2) \sqcup (h_1 \cup h_2) = (g_1 \sqcup h_1) \cup (g_2 \sqcup h_2)$
        \item $e_m \sqcup e_n = e_{m+n}$
        \item $\sigma g \sqcup \tau h = (\sigma + \tau) (g \sqcup h)$
        \item $g_1 \sqcup (g_2 \sqcup g_3) = (g_1 \sqcup g_2) \sqcup g_3$
        \item $e_0 \sqcup g = g \sqcup e_0 = g$
        \item $B_{m,n} (h \sqcup g) = g \sqcup h$
    \end{enumerate}
    for $g, g_i \in F(n)$, $h, h_i \in F(m)$, $\sigma, \sigma_i \in S_n$, $\tau \in S_m$, and $1$ the identity of $S_n$.
\end{thm}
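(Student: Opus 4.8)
The plan is to treat this as an unpacking result: the claim is that a lax symmetric monoidal functor $F \maps \S \to \Mon$ carries exactly the data (1)--(5), and that its defining axioms translate precisely into the twelve displayed equations. I would prove the correspondence by reading the data and equations off a given functor, and then observe that each step is reversible, so that data (1)--(5) satisfying the equations reassembles into a functor. Because $\Hom_\S(n,n) = S_n$ with no morphisms between distinct objects, a functor out of $\S$ is nothing more than a monoid $F(n)$ for each $n$ together with, for each $\sigma \in S_n$, an endomorphism $F(\sigma) \maps F(n) \to F(n)$ in $\Mon$; this observation is what keeps the bookkeeping finite.

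First I would extract the pointwise structure. The monoid $F(n)$ supplies the underlying set (item 1), its unit $e_n$ (item 2), and its multiplication $\cup$ (item 3), and the monoid axioms in the codomain $\Mon$ are exactly equations (1) and (2). Writing $F(\sigma)$ as $\sigma \maps F(n) \to F(n)$ gives item (4); that each $F(\sigma)$ is a homomorphism of monoids is equations (3) and (4), while functoriality of $F$ restricted to the groups $S_n$ is equations (5) and (6).

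Next I would unpack the lax monoidal structure. The laxator is a natural transformation with components $\phi_{m,n} \maps F(m) \times F(n) \to F(m+n)$ in $\Mon$; renaming $\phi_{m,n}$ as $\sqcup$ gives item (5). That each $\phi_{m,n}$ is a homomorphism out of the product monoid $F(m) \times F(n)$ yields the interchange law (7) from multiplicativity and the relation (8) from unit preservation. Naturality of $\phi$ with respect to a morphism $(\sigma,\tau)$ of $\S \times \S$ is exactly equation (9), since the tensor functor sends $(\sigma,\tau)$ to $\sigma + \tau$. The lax unit is a homomorphism $\{\ast\} \to F(0)$ out of the trivial monoid, so it must select $e_0$; the associativity coherence axiom then reduces, using strictness of $+$ on $\S$, to equation (10), the left and right unit coherence axioms reduce to equation (11), and the symmetry coherence axiom, with the braiding $B_{m,n}$ of $\S$ on one side and the swap of $(\Mon,\times)$ on the other, reduces to equation (12).

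The step I expect to require the most care is the bookkeeping of the coherence diagrams: I must check that the trivial associator and unitors of the strict category $\S$, combined with the canonical but nontrivial associator and unitors of $(\Mon,\times)$, collapse the three lax-monoidal coherence axioms into the strict identities (10) and (11), and that the symmetry hexagon collapses to the single equation (12). I anticipate no conceptual obstacle here, only attention to indices such as $m+n$ versus $n+m$, which coincide precisely because $\S$ is skeletal. Finally, since every equation was obtained by rewriting one functorial axiom as an identity of structured maps, the passage is visibly invertible: given (1)--(5) obeying the twelve equations, (1)--(2) rebuild the monoids, (3)--(6) rebuild the functor, (7)--(9) reconstitute $\phi$ as a natural transformation of monoid homomorphisms, and (10)--(12) supply exactly the lax symmetric monoidal coherence, establishing the asserted equivalence.
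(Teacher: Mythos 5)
Your proposal is correct and follows essentially the same route as the paper's proof: unpack the functor into the monoids $F(n)$ with their $S_n$-actions (equations (1)--(6)), then unpack the laxator into the homomorphisms $\sqcup$ with naturality and the coherence axioms (equations (7)--(12)). Your extra observation that the lax unit map $\{\ast\} \to F(0)$ is forced to pick out $e_0$ (so it carries no data) is a point the paper leaves implicit, but it is a refinement of the same argument rather than a different approach.
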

\begin{proof}
    Having a functor $F \maps \S \to \Mon$ is equivalent to having the first four items satisfying equations 1--6. The binary operation $\cup$ gives the set $F(n)$ the structure of a monoid, with $e_n$ acting as the identity. Equation 1 tells us $e_n$ acts as an identity, and Equation 2 gives the associativity of $\cup$. Equations 3 and 4 tell us that $\sigma$ is a monoid homomorphism. Equations 5 and 6 say that the map $(\sigma,g) \mapsto \sigma g$ defines an action of $S_n$ on $F(n)$ for each $n$. All of these actions together give us the functor $F \maps \S \to \Mon$.
    
    That the functor is lax monoidal is equivalent to having item 5 satisfying Equations 7--11. Equations 7 and 8 tell us that $\sqcup$ is a family of monoid homomorphisms. Equation 9 tells us that it is a natural transformation. Equation 10 tells us that the associativity hexagon diagram for lax monoidal functors commutes for $F$. Equation 11 implies the commutativity of the left and right unitor square diagrams. That the lax monoidal functor is symmetric is equivalent to Equation 12. It tells us that the square diagram for symmetric monoidal functors commutes for $F$. 
\end{proof}

This is one of the simplest examples of a network model:

\begin{ex}[\textbf{Simple graphs}] \label{ex:simple_graph}
    Let a \define{simple graph} on a set $V$ be a set of 2-element subsets of $V$, called \define{edges}. There is a one-colored network model $\SG \maps \S \to \Mon$ such that $\SG(n)$ is the set of simple graphs on $\n$.
    
    To construct this network model, we make $\SG(n)$ into a monoid where the product of simple graphs $g_1, g_2 \in \SG(n)$ is their union $g_1 \cup g_2$. Intuitively speaking, to form their union, we `overlay' these graphs by taking the union of their sets of edges. The simple graph on $\n$ with no edges acts as the unit for this operation. The groups $S_n$ acts on the monoids $\SG(n)$ by permuting vertices, and these actions define a functor $\SG \maps \S \to \Mon$.
    
    Given simple graphs $g \in \SG(m)$ and $h \in \SG(n)$ we define $g \sqcup h \in \SG(m + n)$ to be their disjoint union. This gives a monoid homomorphism $\sqcup \maps \SG(m) \times \SG(n) \to \SG(m + n)$ because 
    \[(g_1 \cup g_2) \sqcup (h_1 \cup h_2) = (g_1 \sqcup h_1) \cup (g_2 \sqcup h_2). \]
    This in turn gives a natural transformation with components
    \[\sqcup_{m, n} \maps \SG(m) \times \SG(n) \to \SG(m + n), \] 
    which makes $\SG$ into lax symmetric monoidal functor. 
    
    One can prove this construction really gives a network model using either Thm.\ \ref{thm:equations}, which requires verifying a list of equations, or Thm.\ \ref{thm:graph_model}, which gives a  general procedure for getting a network model from a monoid $M$ by letting elements of $\G_M(n)$ be maps from the complete graph on $\n$ to $M$. If we take $M  = \Boole = \{F,T\}$ with `or' as the monoid operation, this procedure gives the network model $\SG = \G_\Boole$. We explain this in Ex.\ \ref{ex:simple_graph_2}.
\end{ex}
 
There are many other kinds of graph, and many of them give network models:

\begin{ex}[\textbf{Directed graphs}] \label{ex:directed_graph}
    Let a \define{directed graph} on a set $V$ be a collection of ordered pairs $(i,j) \in V^2$ such that $i \ne j$. These pairs are called \define{directed edges}. There is a network model $\DG \maps \S \to \Mon$ such that $\DG(n)$ is the set of directed graphs on $\n$. As in Ex.\
     \ref{ex:simple_graph}, the monoid operation on $\DG(n)$ is union.
\end{ex}

\begin{ex}[\textbf{Multigraphs}] \label{ex:multigraph}
    Let a \define{multigraph} on a set $V$ be a multiset of 2-element subsets of $V$. If we define $\MG(n)$ to be the set of multigraphs on $\n$, then there are at least two natural choices for the monoid operation on $\MG(n)$. 
    The most direct generalization of $\SG$ of Ex.\
     \ref{ex:simple_graph} is the network model $\MG \maps \S \to \Mon$ with 
     values $(\MG(n), \cup)$ where $\cup$  is now union of edge multisets.
     That is, the multiplicity of $\{ i, j \}$ in  $g \cup h$ is maximum of the  multiplicity of $\{ i, j \}$ in $g$ and the  multiplicity of $\{ i, j \}$ in $h$.
    Alternatively, there is another network model  $\MGplus \maps \S \to \Mon$ with 
     values $(\MG(n), +)$ where $+$  is multiset sum. That is, $g + h$ obtained by adding multiplicities of corresponding edges. 
\end{ex}
 
\begin{ex}[\textbf{Directed multigraphs}] \label{ex:directed_multigraph}
    Let a \define{directed multigraph} on a set $V$ be a multiset of ordered pairs $(i,j) \in V^2$ such that $i \ne j$. There is a network model $\DMG \maps \S \to \Mon$ such that $\DMG(n)$ is the set of directed multigraphs on $\n$ with monoid operation the union of multisets. Alternatively, there is a network model with values $(\DMG(n), +)$ where $+$  is multiset sum.
 \end{ex}
 
\begin{ex}[\textbf{Hypergraphs}]
 \label{ex:hypergraph}
   Let a \define{hypergraph} on a set $V$ be a set of nonempty subsets of $V$, called \define{hyperedges}. There is a network model $\HG \maps \S \to \Mon$ such that $\HG(n)$ is the set of hypergraphs on $\n$. The monoid operation $\HG(n)$ is union.
\end{ex}

\begin{ex}[\textbf{Graphs with colored edges}]
\label{ex:graphs_with_colored_edges}
    Fix a set $B$ of \define{edge colors} and let $\SG \maps \S \to \Mon$ be the network model of simple graphs as in Ex.\ \ref{ex:simple_graph}. Then there is a network model $H \maps \S \to \Mon$ with
    \[
        H(n) = \SG(n)^B
    \]
    making the product of $B$ copies of the monoid $\SG(n)$ into a monoid in the usual way. In this model, a network is a $B$-tuple of simple graphs, which we may view as a graph with at most one edge of each color between any pair of distinct vertices. We describe this construction in more detail in Ex.\ \ref{ex:graphs_with_colored_edges_2}.
\end{ex}

There are also examples of network models not involving graphs:

\begin{ex}[\textbf{Partitions}]
\label{ex:partitions}
    A poset is a lattice if every finite subset has both an infimum and a supremum. If $L$ is a lattice, then $(L, \vee)$ and $(L, \wedge)$ are both monoids, where $x \vee y$ is the supremum of $\{x,y\} \subseteq L$ and $x \wedge y$ is the infimum. 
    
    Let $P(n)$ be the set of partitions of the set $\n$. This is a lattice where $\pi \le \pi'$ if the partition $\pi$ is finer than $\pi'$. Thus, $P(n)$ can be made a monoid in either of the two ways mentioned above. Denote these monoids as $P^{\vee}(n)$ and $P^{\wedge}(n)$. These monoids extend to give two network models $P^{\vee}, P^{\wedge} \maps \S \to \Mon$. 
\end{ex}
 
\section{One-colored network models from monoids}
\label{sec:models_from_monoids}

There is a systematic procedure that gives many of the network models we have seen so far. To do this, we take networks to be ways of labelling the edges of a complete graph by elements of some monoid $M$. The operation of overlaying two of these networks is then described using the monoid operation. 

For example, consider the Boolean monoid $\Boole$: that is, the set $\{F,T\}$ with `inclusive or' as its monoid operation. A complete graph with edges labelled by elements of $\Boole$ can be seen as a simple graph if we let $T$ indicate the presence of an edge between two vertices and $F$ the absence of an edge. To overlay two simple graphs $g_1, g_2$ with the same set of vertices we simply take the `or' of their edge labels. This gives our first example of a network model, Ex.\ \ref{ex:simple_graph}. 

To formalize this we need some definitions. Given $n \in \N$, let $\E(n)$ be the set of 2-element subsets of $\n = \{1, \dots, n\}$. We call the members of $\E(n)$ \define{edges}, since they correspond to edges of the complete graph on the set $\n$. We call the elements of an edge $e \in \E(n)$ its \define{vertices}.

Let $M$ be a monoid. For $n \in \N$, let $\G_M(n)$ be the set of functions $g \maps \E(n) \to M$. Define the operation $\cup \colon \G_M(n) \times \G_M(n) \to \G_M(n)$ by $(g_1 \cup g_2)(e) = g_1(e) g_2(e)$ for $e \in \E(n)$.  Define the map $\sqcup \colon \G_M(m) \times \G_M(n) \to \G_M(m+n)$ by
\[
    (g_1 \sqcup g_2)(e) =
    \left\{\begin{array}{cl}
    g_1(e) & {\rm if \; both \; vertices \; of \;} e {\rm \; are \;}\leq m 
    \\
    g_2(e) & {\rm if \; both \; vertices \; of \;} e {\rm \; are \;} > m %
    \\
    {\rm the \; identity \; of \; } M & {\rm otherwise} 
    \\
    \end{array}
    \right.
\]
The symmetric group $S_n$ acts on $\G_M(n)$ by $\sigma(g)(e) = g(\sigma^{-1}(e))$.

\begin{thm}
\label{thm:graph_model}
    For each monoid $M$ the data above gives a one-colored network model $\G_M \maps \S \to \Mon$.
\end{thm}
\begin{proof}
We can define $\G_M$ as the composite of two functors, $\E \maps \S \to \Inj$ and $M^{-} \maps \Inj \to \Mon$, where $\Inj$ is the category of sets and injections.

The functor $\E \maps \S \to \Inj$ sends each object $n \in \S$ to $\E(n)$, and it sends each morphism $\sigma \maps n \to n$ to the permutation of $\E(n)$ that maps any edge $e = \{x,y\} \in \E(n)$ to $\sigma(e) = \{\sigma(x), \sigma(y)\}$. The category $\Inj$ does not have coproducts, but it is closed under coproducts in $\Set$. It thus becomes symmetric monoidal with $+$ as its tensor product and the empty set as the unit object. For any $m, n \in \S$ there is an injection
\[\mu_{m,n} \maps \E(m) + \E(n) \to \E(m+n) \]
expressing the fact that a 2-element subset of either $\m$ or $\n$ gives a 2-element subset of $\m+\n$. The functor $\E \maps \S \to \Inj$ becomes lax symmetric monoidal with these maps $\mu_{m,n}$ giving the lax preservation of the tensor product.

The functor $M^- \maps \Inj \to \Mon$ sends each set $X$ to the set $M^X$ made into a monoid with pointwise operations, and it sends each function $f \maps X \to Y$ to the monoid homomorphism $M^f \maps M^X \to M^Y$ given by
\[(M^f g)(y) = \left\{ \begin{array}{ccl}
g(f^{-1}(y)) & \textrm{if } y \in \mathrm{im}(f) \\
1 & \textrm{otherwise} 
\end{array} \right.\]
for any $g \in M^X$. Using the natural isomorphisms $M^{X + Y} \cong M^X \times M^Y$ and $M^{\emptyset} \cong 1$ this functor can be made symmetric monoidal. 

As the composite of the lax symmetric monoidal functor $\E \maps \S \to \Inj$ and the symmetric monoidal functor $M^- \maps \Inj \to \Mon$, the functor $\G_M \maps \S \to \Mon$ is lax symmetric monoidal, and thus a network model. With the help of Thm.\ \ref{thm:equations}, it is easy to check that this description of $\Gamma_M$ is equivalent to that in the theorem statement.
\end{proof}

\begin{ex}[\textbf{Simple graphs, revisited}]
\label{ex:simple_graph_2}
    Let $\Boole = \{F,T\}$ be the Boolean monoid. If we interpret $T$ and $F$ as `edge' and `no edge' respectively, then $\G_{\Boole}$ is just $\SG$, the network model of simple graphs discussed in Example \ref{ex:simple_graph}.
\end{ex}

Recall from Ex.\ \ref{ex:multigraph} that a multigraph on the set $\n$ is a multisubset of $\E(n)$, or in other words, a function $g \maps \E(n) \to \N$. There are many ways to create a network model $F \maps \S \to \Mon$ for which $F(n)$ is the set of multigraphs on the set $\n$, since $\N$ has many monoid structures. Two of the most important are these:

\begin{ex}[\textbf{Multigraphs with addition for overlaying}]
\label{ex:multigraph_2}
    Let $(\N, +)$ be $\N$ made into a monoid with the usual notion of addition as $+$. In this network model, overlaying two multigraphs $g_1, g_2 \maps \E(n) \to \N$ gives a multigraph $g \maps \E(n) \to \N$ with $g(e) = g_1(e) + g_2(e)$. In fact, this notion of overlay corresponds to forming the multiset sum of edge multisets and $\G_{(\N,+)}$ is the network model of multigraphs called $\MGplus$ in Ex.\ \ref{ex:multigraph}. 
\end{ex}

\begin{ex}[\textbf{Multigraphs with maximum for overlaying}]
\label{ex:multigraph_3}
    Let $(\N, \max)$ be $\N$ made into a monoid with $\max$ as the monoid operation. Then $\G_{(\N,\max)}$ is a network model where overlaying two multigraphs $g_1, g_2 \maps \E(n) \to \N$ gives a multigraph $g \maps \E(n) \to \N$ with $g(e) = g_1(e) \max g_2(e)$.
    For this monoid structure overlaying two copies of the same multigraph gives the same multigraph. In other words, every element in each monoid $\G_{(\N,\max)}(n)$ is idempotent and $\G_{(\N,\max)}$ is the network model of multigraphs called $\MG$ in Ex.\ \ref{ex:multigraph}. 
\end{ex}

\begin{ex}[\textbf{Multigraphs with at most $k$ edges between vertices}]
\label{ex:multigraph_with_at_most_k}
For any $k \in \N$, let $\Boole_k$ be the set $\{0,\dots,k\}$ made into a monoid with the monoid operation $\oplus$ given by 
\[x \oplus y = (x + y) \min k \]
and $0$ as its unit element. For example, $\Boole_0$ is the trivial monoid and $\Boole_1$ is isomorphic to the Boolean monoid. There is a network model $\G_{\Boole_k}$ such that $\G_{\Boole_k}(n)$ is the set of multigraphs on $\n$ with at most $k$ edges between any two distinct vertices. 
\end{ex}

\section{Network models}
\label{sec:netmod_C}

The network models described so far allow us to handle graphs with colored edges, but not with colored vertices. Colored vertices are extremely important for applications in which we have a network of agents of different types. Thus, network models will involve a set $C$ of vertex colors in general. This requires that we replace $\S$ by the free strict symmetric monoidal category generated by the color set $C$. Thus, we begin by recalling this category.

For any set $C$, there is a category $\SC$ for which:
\begin{itemize}
    \item Objects are formal expressions of the form
    \[
        c_1 \otimes \cdots \otimes c_n 
    \]
    for $n \in \N$ and $c_1, \dots, c_n \in C$. 
    We denote the unique object with $n = 0$ as $I$.
    \item There exist morphisms from $c_1 \otimes \cdots \otimes c_m$ to $c'_1 \otimes \cdots \otimes c'_n$ only if $m = n$, and in that case a morphism is a permutation $\sigma \in S_n$ such that $c'_{\sigma(i)} = c_i$ for all $i$.
    \item Composition is the usual composition of permutations. 
\end{itemize}

Note that elements of $C$ can be identified with certain objects of $\S(C)$, namely the one-fold tensor products. We do this in what follows.

\begin{prop}
\label{prop:free}
    $\S(C)$ can be given the structure of a strict symmetric monoidal category making it into the free strict symmetric monoidal category on the set $C$. Thus, if $\A$ is any strict symmetric monoidal category and $f \maps C \to \Ob(\A)$ is any function from $C$ to objects of the $\A$, there exists a unique strict symmetric monoidal functor $F \maps \S(C) \to \A$ with $F(c) = f(c)$ for all $c \in C$.
\end{prop}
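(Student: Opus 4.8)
The plan is to proceed in two stages: first equip $\S(C)$ with a strict symmetric monoidal structure, and then verify the universal property. For the monoidal structure I would take the tensor product on objects to be concatenation of formal expressions, with $I$ as unit; this makes the objects into the free monoid on $C$, which is strictly associative and unital, so the associator and unitors can be taken to be identities. On morphisms, given permutations $\sigma \in S_m$ and $\tau \in S_n$ regarded as morphisms of $\S(C)$, I define their tensor to be the block sum $\sigma + \tau \in S_{m+n}$ of Equation \eqref{eq:plus}, and I take the braiding to be $B_{m,n}$. The first thing to check is that these are genuinely morphisms of $\S(C)$, i.e.\ that block sums and block permutations respect the coloring conditions $c'_{\sigma(i)} = c_i$; this is immediate from the definitions. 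Functoriality of $\otimes$ then reduces to the standard block-sum identities $(\sigma' + \tau')(\sigma + \tau) = \sigma'\sigma + \tau'\tau$ and $1_m + 1_n = 1_{m+n}$, strict associativity and unitality are inherited from concatenation together with the corresponding identities for block sums, and the remaining axioms—naturality of $B$, the two hexagon identities, and the symmetry $B_{n,m} B_{m,n} = 1$—are familiar facts about block permutations of the symmetric groups that can be verified directly.

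For the universal property, strictness forces the definition of $F$ on objects: $F(c_1 \otimes \cdots \otimes c_n) = f(c_1) \otimes \cdots \otimes f(c_n)$ and $F(I)$ is the unit of $\A$. On a morphism $\sigma \maps \v \to \v'$ I send $\sigma$ to the canonical symmetry isomorphism in $\A$ that permutes the tensor factors according to $\sigma$; the coloring condition $c'_{\sigma(i)} = c_i$ guarantees that the source and target of this isomorphism are precisely $F(\v)$ and $F(\v')$. The crucial point, and the place where the real work lies, is that this assignment is well-defined and functorial. Here I would invoke Mac Lane's coherence theorem for symmetric monoidal categories: any two morphisms of $\A$ built from the braiding, associators, and unitors that induce the same permutation of the tensor factors must be equal, so the symmetry isomorphism attached to $\sigma$ depends only on $\sigma$ itself and not on any expression of it as a product of transpositions. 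Coherence likewise delivers functoriality, $F(\sigma'\sigma) = F(\sigma')F(\sigma)$, and strict preservation of the tensor product and the braiding, so $F$ is a strict symmetric monoidal functor with $F(c) = f(c)$.

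Finally, uniqueness follows because such a functor is rigidly constrained: strictness determines its value on every object, and since every permutation in $\S(C)$ is a composite of tensor products of the single generating braiding $B_{1,1}$, preservation of $\otimes$, of composition, and of the braiding pins down $F$ on all morphisms. I expect the coherence step to be the only genuine obstacle; everything else is bookkeeping with block permutations. Since $\S$ is recovered as the special case $C$ a singleton, the same argument simultaneously proves Proposition \ref{prop:S}.
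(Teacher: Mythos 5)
Your proposal is correct, and the structure you put on $\S(C)$ — concatenation as tensor, block sums $\sigma + \tau$ on morphisms, block permutations $B_{m,n}$ as braiding — together with the forced definition of $F$ on objects is exactly what the paper uses. The difference lies entirely in how the verification is handled: the paper's proof is essentially a citation, recording the structure and the formula $F(c_1 \otimes \cdots \otimes c_n) = f(c_1) \otimes \cdots \otimes f(c_n)$ and deferring everything else to Sassone \cite{Sassone} and Gambino--Joyal \cite{GJ} with the remark that the rest ``is easy to check,'' whereas you supply the actual argument. Your two substantive contributions are (i) defining $F$ on morphisms as the canonical symmetry isomorphism of $\A$ attached to the permutation $\sigma$, with Mac Lane's coherence theorem for symmetric monoidal categories guaranteeing that this is independent of a decomposition of $\sigma$ into transpositions, is functorial, and strictly preserves $\otimes$ and the braiding; and (ii) the uniqueness argument from generators, namely that every morphism of $\S(C)$ is a composite of tensor products of identities and braidings $B_{1,1}$ between single colors, so any strict symmetric monoidal functor agreeing with $f$ on $C$ is pinned down. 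This buys a self-contained proof at the cost of invoking coherence, which is fair to flag as the one non-trivial input: the proposition is itself essentially a packaging of coherence, so your proof makes the logical dependence explicit where the paper leaves it inside the cited references. Your closing observation that Prop.\ \ref{prop:S} is the case where $C$ is a singleton also matches the paper, which proves Prop.\ \ref{prop:S} as a special case of Prop.\ \ref{prop:free}.
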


\begin{proof}
This is well-known; see for example Sassone \cite[Sec.\ 3]{Sassone} or Gambino and Joyal \cite[Sec.\ 3.1]{GJ}. The tensor product of objects is $\otimes$, the unit for the tensor product is $I$, and the braiding 
\[(c_1 \otimes \cdots \otimes c_m) \otimes (c'_1 \otimes \cdots \otimes c'_n) \to (c'_1 \otimes \cdots \otimes c'_n)  \otimes (c_1 \otimes \cdots \otimes c_m) \]
is the block permutation $B_{m,n}$. Given $f \maps C \to \Ob(\A)$, we define $F\maps \S(C) \to \A$ on objects by
 \[F(c_1 \otimes \cdots \otimes c_n) = f(c_1) \otimes \cdots \otimes f(c_n) ,\]
and it is easy to check that $F$ is strict symmmetric monoidal, and the unique functor with the required properties.
\end{proof}

\begin{defn}
\label{defn:colored_network_model}
    Let $C$ be a set, called the set of \define{vertex colors}. A
    $C$\define{-colored network model} is a lax symmetric monoidal functor 
     \[F \maps  \SC \to \Cat. \] 
    A \define{network model} is a $C$-colored network model for some set $C$.
\end{defn}

If $C$ has just one element, $\S(C) \cong \S$ and a $C$-colored network model is a one-colored network model in the sense of Def.\ \ref{defn:network_model}. Here are some more interesting examples:

\begin{ex}[\textbf{Simple graphs with colored vertices}]
\label{ex:simple_graphs_with_colored_vertices}
    There is a network model of simple graphs with $C$-colored vertices. To construct this, we start with the network model of simple graphs $\SG \maps \S \to \Mon$ given in Ex.\ \ref{ex:simple_graph}. There is a unique function from $C$ to the one-element set. By Prop.\ \ref{prop:free}, this function extends uniquely to a strict symmetric monoidal functor 
    \[F \maps \S(C) \to \S . \]
    An object in $\S(C)$ is formal tensor product of $n$ colors in $C$; applying $F$ to this object we forget the colors and obtain the object $n \in \S$. Composing $F$ and $\SG$, we obtain a lax symmetric monoidal functor
    \[
        \S(C) \stackrel{F}{\longrightarrow} \S  \stackrel{\SG}{\longrightarrow} \Mon
    \]
    which is the desired network model. We can use the same idea to `color' any of the network models in Section \ref{sec:netmod}.
    
    Alternatively, suppose we want a network model of simple graphs with $C$-colored vertices where an edge can only connect two vertices of the same color. For this we take a cartesian product of $C$ copies of the functor $\SG$, obtaining a lax symmetric monoidal functor 
    \[{\SG}^C \maps \S^C \to \Mon^C. \]  
    There is a function $h \maps C \to \Ob(\S^C)$ sending each $c \in C$ to the object of $S^\C$ that equals $1 \in \S$ in the $c$th place and $0 \in \S$ elsewhere. Thus, by Prop.\ \ref{prop:free}, $h$ extends uniquely to a strict symmetric monoidal functor 
    \[H_C \maps \S(C) \to \S^C  .\]
    Furthermore, the product in $\Mon$ gives a symmetric monoidal functor
    \[\Pi \maps \Mon^C \to \Mon .\]
    Composing all these, we obtain a lax symmetric monoidal functor
    \[
        \SC \stackrel{H_C}{\longrightarrow} \S^C \stackrel{\SG^C}{\longrightarrow} \Mon^C \stackrel{\Pi}{\longrightarrow} \Mon
    \]
    which is the desired network model. 
    
    More generally, if we have a network model $F_c \maps \S \to \Mon$ for each color $c \in C$, we can use the same idea to create a network model:
    \[
        \xymatrix{
        \SC \ar[r]^{H_C} 
        & \S^C  \ar[rr]^{\prod_{c \in C} F_c} & &
        \Mon^C \ar[r]^{\Pi} & \Mon
        }
    \]
    in which the vertices of color $c \in C$ partake in a network of type $F_c$.
    \label{ex:colors}
\end{ex}

\begin{ex}[\textbf{Petri nets}]
    Petri nets are a kind of network widely used in computer science, chemistry and other disciplines \cite{RxNet}. A \define{Petri net} $(S, T, i, o)$ is a pair of finite sets and a pair of functions $i, o \maps S \times T \to \N$. Let $P(m,n)$ be the set of Petri nets $(\m, \n, i, o)$. This becomes a monoid with product
    \[
        (\m, \n, i, o) \cup (\m, \n, i', o')
        = (\m, \n, i+i', o+o')
    \]
    The groups $S_m\times S_n$ naturally act on these monoids, so we have a functor 
    \[P \maps \S^2 \to \Mon . \]
    There are also `disjoint union' operations
    \[\sqcup \maps P(m,n) \times P(m',n') \to P(m+m', n+n') \]
    making $P$ into a lax symmetric monoidal functor. In Ex.\ \ref{ex:simple_graphs_with_colored_vertices} we described a strict symmetric monoidal functor $H_C \maps \S(C) \to \S^C$ for any set $C$. In the case of the 2-element set this gives
    \[H_2 \maps \S(2) \to \S^2 .\]
    We define the network model of Petri nets to be the composite
    \[\S(2) \stackrel{H_2}{\longrightarrow} \S^2 \stackrel{P}{\longrightarrow} \Mon .\]
\end{ex}

\section{Categories of network models}
\label{sec:cat_netmod}

For each choice of the set $C$ of vertex colors, we can define a category $\NM_C$ of $C$-colored network models. However, it is useful to create a larger category $\NM$ containing all these as subcategories, since there are important maps between network models that involve changing the vertex colors. 

\begin{defn}
\label{defn:NM_C}
For any set $C$, let $\NM_C$ be the category for which:
\begin{itemize}
    \item an object is a $C$-colored network model, that is, a lax symmetric monoidal functor $F \maps \SC\to \Cat$,
    \item a morphism is a monoidal natural transformation between such functors:
    \[\begin{tikzcd}
    \SC
    \arrow[r,"F", bend left=40]
    \arrow[bend left=40]{r}[name=LUU, below]{}
    \arrow[r,"F'", bend right=40,swap, pos=0.45]
    \arrow[bend right=40, pos = 0.53]{r}[name=LDD]{}
    \arrow[Rightarrow,to path=(LUU) -- (LDD)\tikztonodes]{r}{\gn}
    & 
    \Cat
    \end{tikzcd}\]
    and composition is the usual composition of monoidal natural transformations.
    \end{itemize}
\end{defn}

In particular, $\NM_1$ is the category of one-colored network models. For an example involving this category, consider the network models built from monoids in Sec.\ \ref{sec:models_from_monoids}. Any monoid $M$ gives a one-colored network model $\G_M$ for which an element of $\G_M(n)$ is a way of labelling the edges of the complete graph on $\n$ by elements of $M$. Thus, we should expect any homomorphism of monoids $f \maps M \to M'$ to give a morphism of network models $\G_f \maps \G_M \to \G_{M'}$ for which 
\[\G_f(n) \maps \G_M(n) \to \G_{M'}(n)  \]
applies $f$ to each edge label. 

Indeed, this is the case. As explained in the proof of Thm.\ \ref{thm:graph_model}, the network model $\G_M$ is the composite
\[\S \stackrel{\E}{\longrightarrow} \Inj 
\stackrel{M^{-}}{\longrightarrow} \Mon .\]
The homomorphism $f$ gives a natural transformation
\[f^{-} \maps M^{-} \To M'^{-}  \]
that assigns to any finite set $X$ the monoid homomorphism
\[\begin{array}{rccl} 
f^X \maps & M^X & \to     & M'^X  \\
          &  g  & \mapsto & f \circ g .
\end{array}
\]      
It is easy to check that this natural transformation is monoidal. Thus, we can whisker it with the lax symmetric monoidal functor $\E$ to get a morphism of network models:
\[\begin{tikzcd}
    \S \arrow[r,"\E"] &
    \Inj
    \arrow[r,"M^-", bend left=40]
    \arrow[bend left=40]{r}[name=LUU, below]{}
    \arrow[r,"M'^-", bend right=40,swap, pos=0.45]
    \arrow[bend right=40]{r}[name=LDD]{}
    \arrow[Rightarrow,to path=(LUU) -- (LDD)\tikztonodes]{r}{f^-}
    & 
    \Mon
    \end{tikzcd}\]
and we call this $\G_f \maps \G_M \to \G_{M'}$.

\begin{thm}
\label{thm:models_from_monoids}
There is a functor 
\[\G \maps \Mon \to \NM_1  \]
sending any monoid $M$ to the network model $\G_M$ and any homomorphism of monoids $f \maps M \to M'$ to the morphism of network models $\G_f \maps \G_M \to \G_{M'}$.
\end{thm}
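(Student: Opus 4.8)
The plan is to realize $\G$ as a composite of two functors, paralleling the factorization $\G_M = M^{-}\circ \E$ used in the proof of Thm~\ref{thm:graph_model}. The objects part of $\G$ is already handled by that theorem, which shows each $\G_M$ is a one-colored network model and hence an object of $\NM$; what remains is to organize the morphism assignment and verify functoriality.

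First I would assemble $M \mapsto M^{-}$ and $f\mapsto f^{-}$ into a single functor
\[ (-)^{-}\maps \Mon \to \D,\]
where $\D$ denotes the category whose objects are symmetric monoidal functors $\Inj\to\Mon$ and whose morphisms are monoidal natural transformations. Two checks arise. First, each $f^{-}$ is a monoidal natural transformation $M^{-}\To M'^{-}$. For naturality I would verify, for every injection $h\maps X\to Y$, that $M'^{h}\circ f^{X}=f^{Y}\circ M^{h}$; unwinding the formula for $M^{h}$ from the proof of Thm~\ref{thm:graph_model}, both composites send $g\in M^{X}$ to the function equal to $f\bigl(g(h^{-1}(y))\bigr)$ on $y\in\mathrm{im}(h)$ and to the identity otherwise, the equality on the complement of $\mathrm{im}(h)$ resting on $f(1)=1$. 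Monoidality reduces to compatibility with the isomorphisms $M^{X+Y}\cong M^{X}\times M^{Y}$, which is immediate because $f^{-}$ acts pointwise. Second, functoriality of $(-)^{-}$: from $(\mathrm{id}_{M})^{X}=\mathrm{id}_{M^{X}}$ and $(g\circ f)^{X}(k)=g\circ f\circ k=g^{X}(f^{X}(k))$ we read off $(\mathrm{id}_{M})^{-}=\mathrm{id}_{M^{-}}$ and $(g\circ f)^{-}=g^{-}\circ f^{-}$.

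Next I would whisker with the fixed lax symmetric monoidal functor $\E\maps\S\to\Inj$ of Thm~\ref{thm:graph_model}. Precomposing a symmetric monoidal functor $\Inj\to\Mon$ with $\E$ yields a lax symmetric monoidal functor $\S\to\Mon$, i.e.\ a one-colored network model, and whiskering a monoidal natural transformation with $\E$ again yields a monoidal natural transformation. This operation $(-)\ast\E$ is itself a functor from $\D$ to the one-colored network models (the objects of $\NM$ of the form in Def~\ref{defn:NM_C} with $C$ a one-element set), since whiskering carries identity natural transformations to identities and distributes over vertical composition: $(\theta'\circ\theta)\ast\E=(\theta'\ast\E)\circ(\theta\ast\E)$. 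Composing the two functors gives $\G=\bigl((-)\ast\E\bigr)\circ(-)^{-}$, which sends $M$ to $M^{-}\circ\E=\G_{M}$ and $f$ to $f^{-}\ast\E=\G_{f}$, and is a functor as a composite of functors.

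The only substantive work lies in the naturality and monoidality of $f^{-}$, and even these are routine pointwise verifications whose essential inputs are that $f$ is a monoid homomorphism (so $f(1)=1$ and $f$ preserves products) together with the symmetric monoidality of $M^{-}$ already established in Thm~\ref{thm:graph_model}. Once $(-)^{-}$ is known to be a functor into $\D$, everything else follows formally from the elementary $2$-categorical properties of whiskering, so I expect no genuine obstacle beyond bookkeeping.
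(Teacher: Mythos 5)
Your proposal is correct and takes essentially the same route as the paper: the paper likewise defines $\G_f$ by whiskering the monoidal natural transformation $f^- \maps M^- \To M'^-$ with $\E$, and proves functoriality from the identity $(f'f)^- = f'^- \circ f^-$ (together with preservation of identities), leaving the naturality and monoidality of $f^-$ as an easy check. Your packaging of the argument as a composite of two functors---$(-)^-$ into the category of symmetric monoidal functors $\Inj \to \Mon$, followed by whiskering with $\E$---is only a presentational refinement of the same idea, with the bonus that you spell out the pointwise verification (in particular the role of $f(1)=1$) that the paper omits.
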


\begin{proof}
To check that $\G$ preserves composition, note that
\[\begin{tikzcd}[column sep=huge]
    \S 
    \arrow[r,"\E"] 
    &
    \Inj
    \arrow[r, "M^-", bend left=80]
    \arrow[r, ""{name=TOP}, bend left=80, swap, pos=0.455]
    \arrow[r, "M'^-"{name=Ml}] 
    \arrow[Rightarrow, from=TOP, to=Ml, "f^-", pos=0.3]
    \arrow[r, ""{name=M}, swap]
    \arrow[r, "M''^-", bend right=80, swap]
    \arrow[r, ""{name=BOT}, bend right=80, pos=0.45]
    \arrow[Rightarrow, from=M, to=BOT, "f'^-", pos=0.5]
    & 
    \Mon
\end{tikzcd}\]
equals
\[\begin{tikzcd}[column sep=huge]
    \S \arrow[r,"\E"] &
    \Inj
    \arrow[r,"M^-", bend left=80]
    \arrow[bend left=80,pos=0.47]{r}[name=LUU, below]{}
    \arrow[r,"M''^-", bend right=80,swap, pos=0.5]
    \arrow[bend right=80,pos=0.44]{r}[name=LDD]{}
    \arrow[Rightarrow, from=LUU, to=LDD, "(f'f)^-"]
    & 
    \Mon
    \end{tikzcd}   \]
since $f'^- f^- = (f'f)^-$. Similarly $\G$ preserves identities. \end{proof}

It has been said that category theory is the subject in which even the examples need examples. So, we give an example of the above result:

\begin{ex}[\textbf{Imposing a cutoff on the number of edges}]
\label{ex:cutoff}
In Ex.\ \ref{ex:multigraph_2} we described the network model of multigraphs $\MGplus$ as $\G_{(\N,+)}$. In Ex.\ \ref{ex:multigraph_with_at_most_k} we described a network model $\G_{\Boole_k}$ of multigraphs with at most $k$ edges between any two distinct vertices. There is a homomorphism of monoids
\[\begin{array}{rccl}     f \maps &(\N,+) &\to& \Boole_k \\
& n &\mapsto & n \min k
\end{array} \]
and this induces a morphism of network models
\[\G_f \maps \G_{(\N,+)} \to \G_{\Boole_k} .\]
This morphism imposes a cutoff on the number of edges between any two distinct vertices: if there are more than $k$, this morphism keeps only $k$ of them. In particular, if $k = 1$, $\Boole_k$ is the Boolean monoid, and 
\[\G_f \maps \MGplus \to \SG \]
sends any multigraph to the corresponding simple graph.
\end{ex}

One useful way to combine $C$-colored networks is by `tensoring' them. This makes $\NM_C$ into a symmetric monoidal category:

\begin{thm}
\label{thm:tensor_product_of_network_models}
For any set $C$, the category $\NM_C$ can be made into a symmetric monoidal category with the tensor product defined pointwise, so that for objects $F, F' \in \NM_C$ we have 
\[(F \otimes F')(x) = F(x) \times F'(x) \]
for any object or morphism $x$ in $\S(C)$, and for morphisms $\phi, \phi'$ in $\NM_C$ we have
\[(\phi \otimes \phi')_x = \phi_x \times \phi'_x \]
for any object $x \in \S(C)$.
\end{thm}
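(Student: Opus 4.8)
The plan is to transport the cartesian symmetric monoidal structure of $\Cat$ to $\NM_C$ one object at a time. Two things genuinely need checking: (i) that the pointwise product $F \otimes F'$ of two network models is again a network model, i.e.\ that it carries a lax symmetric monoidal structure; and (ii) that the coherence isomorphisms of $\Cat$, taken pointwise, assemble into \emph{monoidal} natural transformations obeying the symmetric monoidal axioms. Everything else is bookkeeping, inherited componentwise from $\Cat$.

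For (i), the cleanest route is to exhibit $F \otimes F'$ as a composite of symmetric monoidal functors. The diagonal $\Delta \maps \S(C) \to \S(C) \times \S(C)$ is strict symmetric monoidal; the product $F \times F' \maps \S(C) \times \S(C) \to \Cat \times \Cat$ of two lax symmetric monoidal functors is lax symmetric monoidal; and, crucially because $\Cat$ is cartesian, the product functor $\times \maps \Cat \times \Cat \to \Cat$ is strong symmetric monoidal. Hence
\[ F \otimes F' \;=\; \times \circ (F \times F') \circ \Delta \]
is lax symmetric monoidal, being a composite of such functors. Unwinding the composite yields the explicit laxator: on objects $x, y$ it is
\[ F(x) \times F'(x) \times F(y) \times F'(y) \xrightarrow{\ \cong\ } F(x) \times F(y) \times F'(x) \times F'(y) \xrightarrow{\ \mu_{x,y} \times \mu'_{x,y}\ } F(x \otimes y) \times F'(x \otimes y), \]
where the first arrow is the symmetry of $\Cat$ interchanging the middle two factors and $\mu, \mu'$ are the laxators of $F, F'$; the unit map is $\mathbf{1} \cong \mathbf{1} \times \mathbf{1} \to F(I) \times F'(I)$. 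If one prefers to avoid the composite-of-monoidal-functors argument, one can take these formulas as definitions and verify the associativity, unit, and symmetry coherence diagrams by hand, each reducing to the corresponding diagram for $F$ and $F'$ together with coherence of the symmetry in $\Cat$.

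For (ii), take the unit object of $\NM_C$ to be the constant functor at the terminal category $\mathbf{1}$, which is trivially strict symmetric monoidal and hence a network model (note $(F \otimes \mathbf{1})(x) = F(x) \times \mathbf{1} \cong F(x)$). Define the associator, unitors, and braiding of $\NM_C$ componentwise by those of $\Cat$: for instance the braiding $F \otimes F' \To F' \otimes F$ has component at $x$ the symmetry $F(x) \times F'(x) \xrightarrow{\cong} F'(x) \times F(x)$. To make $\otimes$ a bifunctor one checks that $\phi \otimes \phi'$, with components $\phi_x \times \phi'_x$, is monoidal whenever $\phi, \phi'$ are. It then remains to verify that each of the coherence families is a monoidal natural transformation and that the pentagon, triangle, and hexagon identities hold in $\NM_C$.

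The main obstacle is precisely the monoidality check in (ii): showing that the pointwise associator, unitors, and braiding commute with the shuffle-then-laxate structure maps of the tensored functors. After expanding the laxators, each such square becomes a diagram in $\Cat$ built entirely from the product functor, its associativity and symmetry isomorphisms, and the laxators $\mu, \mu'$ of the individual functors; its commutativity follows from the coherence theorem for symmetric monoidal categories applied to $\Cat$ (all the competing shuffle morphisms coincide) together with naturality of $\mu$ and $\mu'$. Once this is granted, the pentagon, triangle, and hexagon axioms for $\NM_C$ hold because they already hold componentwise in $\Cat$, and the symmetric monoidal structure on $\NM_C$ is established.
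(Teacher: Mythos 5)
Your proposal is correct, but it takes a genuinely different route from the paper. The paper's proof is essentially a citation: it observes that for \emph{any} symmetric monoidal categories $\A$ and $\B$ there is a symmetric monoidal category $\hom_{\SMC}(\A,\B)$ of lax symmetric monoidal functors and monoidal natural transformations with pointwise tensor product, notes that Hyland and Power proved this in the ``weak'' (strong monoidal) case, and asserts that the lax case works the same way; Theorem \ref{thm:tensor_product_of_network_models} is then the special case $\A = \S(C)$, $\B = \Cat$. You instead give a self-contained construction, whose key device is the factorization $F \otimes F' = \times \circ (F \times F') \circ \Delta$: this painlessly produces the laxator of the pointwise product as a composite of lax/strong symmetric monoidal functors, after which the coherence data of $\NM_C$ is defined pointwise and the monoidality of the associator, unitors and braiding is reduced to naturality of the laxators plus symmetric monoidal coherence in $\Cat$. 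What your approach buys is an explicit, citation-free argument with usable formulas for the structure maps (the shuffle-then-laxate description), which is in the spirit of the paper's later elementary computations with the Grothendieck construction; what the paper's approach buys is brevity and the more general statement for an arbitrary target $\B$. One small correction: cartesianness of $\Cat$ is not actually ``crucial'' in your step (i) --- for \emph{any} symmetric monoidal $\B$ the tensor $\otimes_\B \maps \B \times \B \to \B$ is strong symmetric monoidal (by symmetric coherence), so your decomposition argument proves the paper's general claim verbatim with $\times$ replaced by $\otimes_\B$; only the identification of the unit object as the constant functor at the terminal category uses that $\Cat$ is cartesian, and in general one takes the constant functor at $I_\B$.
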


\begin{proof}
    More generally, for any symmetric monoidal categories $\A$ and $\B$, there is a symmetric monoidal category $\hom_{\SMC}(\A,\B)$ whose objects are lax symmetric monoidal functors from $\A$ to $\B$ and whose morphisms are monoidal natural transformations, with the tensor product defined pointwise. The proof in the `weak' case was given by Hyland and Power \cite{HP}, and the lax case works the same way.
\end{proof}

If $F,F' \maps \S(C) \to \Mon$ then their tensor product again takes values in $\Mon$. There are many interesting examples of this kind:

\begin{ex}[\textbf{Graphs with colored edges, revisited}]
\label{ex:graphs_with_colored_edges_2}
    In Ex.\ \ref{ex:graphs_with_colored_edges} we described network models of simple graphs with colored edges. The above result lets us build these network models starting from more basic data. To do this we start with the network model for simple graphs, $\SG \maps \S \to \Mon$, discussed in Ex.\ \ref{ex:simple_graph}. Fixing a set $B$ of `edge colors', we then take a tensor product of copies of $\SG$, one for each $b \in B$. The result is a network model $\SG^{\otimes B} \maps \S \to \Mon$ with 
    \[\SG^{\otimes B}(n) = \SG(n)^B  \]
    for each $n \in \N$.
\end{ex}

\begin{ex}[\textbf{Combined networks}]
\label{ex:mixed_networks}
    We can also combine networks of different kinds. For example, if $\DG \maps \S \to \Mon$ is the network model of directed graphs given in Ex.\ \ref{ex:directed_graph} and $\MG \maps \S \to \Mon$ is the network model of multigraphs given in Ex.\ \ref{ex:multigraph}, then   
    \[\DG \otimes \MG \maps \S \to \Mon \]
    is another network model, and we can think of an element of $(\DG \otimes \MG)(n)$ as a directed graph with red edges together with a multigraph with blue edges on the set $\n$.
\end{ex}

Next we describe a category $\NM$ of network models with arbitrary color sets, which includes all the categories $\NM_C$ as subcategories. To do this, first we introduce `color-changing' functors. Recall that elements of $C$ can be seen as certain objects of $\S(C)$, namely the 1-fold tensor products. If $f \maps C \to C'$ is a function, there exists a unique strict symmetric monoidal functor $f_* \maps \S(C) \to \S(C')$ that equals $f$ on objects of the form $c \in C$. This follows from Prop.\ \ref{prop:free}.

Next, we define an indexed category $\NM_{-} \maps \Set^{\op} \to \CAT$ that sends any set $C$ to $\NM_C$ and any function $f \maps C \to D$ to the functor that sends any $D$-colored network model $F \maps \S(D) \to \Cat$ to the $C$-colored network model given by the composite
\[\S(C) \xrightarrow{f_*} \S(D) \xrightarrow{F} \Cat .\]
Applying the Grothendieck construction to this indexed category, we define the category of network models to be
\[\NM = \int \NM_-. \]
In elementary terms, $\NM$ has:
\begin{itemize}
    \item pairs $(C,F)$ for objects, where $C$ is a set and $F \maps \S(C) \to \Cat$ is a $C$-colored network model.
    \item pairs $(f,g) \maps (C,F) \to (D,G)$ for morphisms, where $f \maps C \to D$ is a function and $g \maps F \Rightarrow G \circ f_*$ is a morphism of network models.
\end{itemize}

\begin{ex}[\textbf{Simple graphs with colored vertices, revisited}]
    In Ex.\ \ref{ex:simple_graphs_with_colored_vertices} we constructed the network model of simple graphs with colored vertices. We started with the network model for simple graphs, which is a one-colored network model $\SG \maps \S \to \Mon$. The unique function $! \maps C \to 1$ gives a strict symmetric monoidal functor $!_* \maps \S(C) \to \S(1) \cong \S$. The network model of simple graphs with $C$-colored vertices is the composite 
     \[
       \S(C) \stackrel{!_*}{\longrightarrow} \S \stackrel{\SG}{\longrightarrow} \Mon
    \]
    and there is a morphism from this to the network model of simple graphs, which has the effect of forgetting the vertex colors.
\end{ex}

In fact, $\NM$ can be understood as a subcategory of the following category:

\begin{defn} 
\label{defn:SMICat}
Let $\SMICat$ be the category where:
\begin{itemize}
    \item objects are pairs $(\C,F)$ where $\C$ is a small symmetric monoidal category and $F \maps \C \to \Cat$ is a lax symmetric monoidal functor, where $\Cat$ is considered with its cartesian monoidal structure.
    \item morphisms from $(\C,F)$ to $(\C',F')$ are pairs $(G,\gn)$ where $G \maps \C \to \C'$ is a lax symmetric monoidal functor and $\gn \maps F \To F' \circ G $ is a symmetric monoidal natural transformation:
    \[\begin{tikzcd}
    \C
    \arrow[dr, "F"]
    \arrow[dr, ""{name=F}, swap]
    \arrow[dd, "G", swap]
    \\&
    \Cat
    \\
    \C'
    \arrow[ur, "F'", swap]
    \arrow[ur, ""{name=F'}, pos=0.43]
    \arrow[Rightarrow, from = F, to = F', "\gn", swap]
\end{tikzcd}\]
    \end{itemize}
\end{defn}

We shall use this way of thinking in the next two sections to build operads from network models. For experts, it is worth admitting that $\SMICat$ is part of a 2-category where a 2-morphism $\xi \maps (G,\gn) \To (G',\gn')$ is a natural transformation $\xi \maps G \to G'$ such that 
\[\begin{tikzcd}
    \C 
    \arrow[dd, bend right=90, "G", pos=0.495, swap]
    \arrow[dd, bend right=90, ""{name=L, right}, swap]
    \arrow[dd,"G'"{name=R, left}, swap]
    \arrow[dr,"F", pos=0.4]
    \arrow[dr,""{name=U, below}, pos=0.4]
    & & &  
    \C'
    \arrow[dd, "G"{name=R2, left}, pos=0.52, swap]
    \arrow[dr,"F", pos=0.4]
    \arrow[dr,""{name=U2, below}, pos=0.44]
    \\
    & \Cat & = & &  \Cat.
    \\
    \C
    \arrow[ur,"F'",swap, pos=0.4]
    \arrow[ur,""{name=D}, pos=0.46]
    \arrow[Rightarrow, from=U, to=D, "\gn'", swap]
    \arrow[Rightarrow, from=L, to=R, "\xi"{above}, swap]
    & & & 
    \C'
    \arrow[ur,"F'",swap, pos=0.4]
    \arrow[ur,""{name=D2}]
    \arrow[Rightarrow, from=U2, to=D2, "\gn", swap, pos=0.55]
    \end{tikzcd}
    \]
This lets us define 2-morphisms between network models, extending $\NM$ to a 2-category. We do not seem to need these 2-morphisms in our applications, so we suppress 2-categorical considerations in most of what follows. However, we would not be surprised if the 2-categorical aspects of network models turn out to be important, and we touch on them in Sec.\ \ref{subsec:mon_fib}.

\section{The Grothendieck construction}
\label{sec:Grothendieck}

In this section we describe how to build symmetric monoidal categories using the Grothendieck construction. In the next section we use this to construct operads from network models, but the material here is self-contained and of independent interest. 

In what follows we always give $\Cat$ its cartesian symmetric monoidal structure.  Given a small category $\C$ and a functor $F \maps \C \to \Cat$, the \define{Grothendieck construction} gives a category $\Int F$ where:
\begin{itemize}
    \item the objects are pairs $(c, x)$, where $c\in \C$ and $x$ is an object in $Fc$;
    \item the morphisms are $(f, g) \maps (c,x) \to (d,y)$ where $f \maps c\to d$ is a morphism in $\C$ and $g \maps Ff(x) \to y$ is a morphism in $Fd$;
    \item the composite of
    \[
        (f',g') \maps (c,x)\to (d,y)
    \]
    and
    \[
        (f,g) \maps (d,y) \to (e,z)
    \]
    is given by
    \begin{equation}
    \label{eq:composition}
        (f, g) \circ (f', g') = (f \circ f', g \circ F(f)(g')).
    \end{equation}
\end{itemize}

In what follows we prove:
\begin{itemize}
    \item \textbf{Thm.\ \ref{thm:G_construction_for_mon_cats}}: if $\C$ is monoidal and $F$ is lax monoidal, then $\Int F$ is monoidal category.
    \item \textbf{Thm.\ \ref{thm:G_construction_for_braided_cats}}: If $\C$ is braided monoidal and $F$ is lax braided monoidal, then $\Int F$ is braided monoidal.
    \item \textbf{Thm.\ \ref{thm:G_construction_functorial_for_symmetric_cats}}: If $\C$ is symmetric monoidal and $F$ is lax symmetric monoidal, then $\Int F$ is symmetric monoidal.
\end{itemize}
Moreover, in each case the Grothendieck construction is functorial. For example, in Def.\ \ref{defn:SMICat} we described a category $\SMICat$ where an object is a symmetric monoidal category $\C$ equipped with a lax symmetric monoidal functor $F \maps \C \to \Cat$. In Thm.\ \ref{thm:G_construction_functorial_for_symmetric_cats} we show that the Grothendieck construction gives a functor
\[\textstyle{\Int} \maps \SMICat \; \to \; \SMC .\]
In proving these results we take a self-contained and elementary approach which provides the equations that we need later. We sketch a more high-powered 2-categorical approach using fibrations and indexed categories in Sec.\ \ref{subsec:mon_fib}.

We use the following lemma implicitly whenever we need to construct an isomorphism in $\Int F$:

\begin{lem}
    If $f \maps c \to d$ and $g \maps F(f)(x) \to y$ are isomorphisms, then $(f,g)$ is an isomorphism in $\Int F$. 
\end{lem}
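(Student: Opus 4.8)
The plan is to show that $(f,g)$ has a two-sided inverse in $\Int F$ by explicitly constructing it from the inverses $f^{-1}$ and $g^{-1}$, then verifying that the proposed inverse composes correctly on both sides using the composition formula \eqref{eq:composition}. Since $f \maps c \to d$ is an isomorphism in $\C$ and $g \maps F(f)(x) \to y$ is an isomorphism in $Fd$, I have $f^{-1} \maps d \to c$ in $\C$ and $g^{-1} \maps y \to F(f)(x)$ in $Fd$. The candidate inverse should be a morphism $(d,y) \to (c,x)$, so its first component is $f^{-1}$ and its second component must be a morphism $F(f^{-1})(y) \to x$ in $Fc$.

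First I would construct this second component. Applying the functor $F(f^{-1}) \maps Fd \to Fc$ to the isomorphism $g^{-1} \maps y \to F(f)(x)$ yields an isomorphism $F(f^{-1})(y) \to F(f^{-1})(F(f)(x)) = F(f^{-1} \circ f)(x) = F(1_c)(x) = x$, using functoriality of $F$ and the fact that $F(1_c)$ is the identity on $Fc$. Call this composite $\bar{g} \maps F(f^{-1})(y) \to x$; it is an isomorphism in $Fc$ because $F(f^{-1})$ preserves isomorphisms and $g^{-1}$ is one. Thus $(f^{-1}, \bar{g}) \maps (d,y) \to (c,x)$ is a well-formed morphism in $\Int F$, and I would propose it as the inverse of $(f,g)$.

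Next I would verify the two composites are identities by direct computation with \eqref{eq:composition}. For $(f^{-1},\bar{g}) \circ (f,g)$ the formula gives first component $f^{-1} \circ f = 1_c$ and second component $\bar{g} \circ F(f^{-1})(g)$; substituting the definition of $\bar{g}$ and using functoriality together with the identity laws of $Fc$, this should collapse to $1_x$, yielding the identity morphism $(1_c, 1_x)$ on $(c,x)$. The reverse composite $(f,g) \circ (f^{-1},\bar{g})$ is handled symmetrically, landing on the identity of $(d,y)$.

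The main obstacle, and really the only nonroutine point, is keeping the coherence between $\C$-level and fiber-level data straight: the second-component computations require careful tracking of how $F$ applied to a composite of morphisms relates to the composite of the applied functors, and one must invoke $F(f^{-1} \circ f) = F(1_c) = 1_{Fc}$ at exactly the right moment to cancel the mismatched fiber-functor applications. None of this is deep, but the bookkeeping in the second components is where an error would hide, so I would write out those two cancellations explicitly rather than assert them.
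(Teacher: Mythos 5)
Your proposal is correct and follows essentially the same route as the paper: your $\bar{g} = F(f^{-1})(g^{-1})$ (with the strict identification $F(f^{-1})F(f) = F(1_c) = 1_{Fc}$) is exactly the paper's proposed inverse $(f^{-1}, F(f^{-1})(g^{-1}))$, and both verifications proceed by the same direct computation with the composition formula and functoriality. Indeed, your write-up is slightly cleaner than the paper's, which swaps the identity labels $(1_c,1_x)$ and $(1_d,1_y)$ between the two composites.
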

\begin{proof}
    Na\"ively one might think that $(f \inv, g \inv)$ should be the inverse of $(f,g)$. However, $(f\inv,g\inv)$ is not even a morphism from $(d,y)$ to $(c,x)$ since $g\inv$ does not go from $Ff\inv(y)$ to $x$. The inverse of $(f,g)$ is $(f\inv, Ff\inv g\inv)$:
    \begin{align*}
        (f, g) \circ (f\inv, Ff\inv g\inv)
        &= (f\circ f\inv, g \circ (F(f))(F(f\inv) g\inv))
        \\&= (1_c, g \circ (F(f)\circ F(f\inv)) (g\inv))
        \\&= (1_c, g \circ g\inv)
        \\&= (1_c, 1_x)
    \end{align*}
    \begin{align*}
        (f\inv, F(f\inv) g\inv) \circ (f,g)
        &= (f\inv \circ f, Ff\inv (g\inv) \circ Ff\inv(g))
        \\&= (1_d, Ff\inv (g\inv \circ g))
        \\&= (1_d, 1_y) \qedhere
    \end{align*}
\end{proof}

Next we discuss the functoriality of the Grothendieck construction. 

\begin{defn} 
\label{defn:ICat}
    Let $\ICat$ denote the category where
    \begin{itemize}
        \item an object is a pair $(\C,F)$ where $\C$ is a small category and $F \maps \C \to \Cat$ is a functor
        \item a morphism $(\C_1, F_1) \to (\C_2, F_2)$ is a pair $(G,\gn)$ where $G \maps \C_1 \to \C_2$ is a functor and $\gn \maps F_1 \To F_2 \circ G$ is a natural transformation:
        \[\begin{tikzcd}
            \C_1
            \arrow[dr, "F_1"]
            \arrow[dr, ""{name=F}, swap]
            \arrow[dd, "G", swap]
            \\&
            \Cat
            \\
            \C_2
            \arrow[ur, "F_2", swap]
            \arrow[ur, ""{name=F'}, pos=0.45]
            \arrow[Rightarrow, from = F, to = F', "\gn", swap]
        \end{tikzcd}\]
    \end{itemize}
    For brevity, we denote the object $(\C, F)$ as simply $F$, and the morphism $(G,\gn)$ as simply $G$. 
\end{defn}

Having defined the Grothendieck construction on objects of $\ICat$, we proceed to define it on morphisms. Suppose we have a morphism in $\ICat$:
\[
\begin{tikzcd}
    \C
    \arrow[dr, "F"]
    \arrow[dr, ""{name=F}, swap]
    \arrow[dd, "G", swap]
    \\&
    \Cat
    \\
    \C'
    \arrow[ur, "F'", swap]
    \arrow[ur, ""{name=F'}, pos=0.43]
    \arrow[Rightarrow, from = F, to = F', "\gn", swap]
\end{tikzcd}\]
Then we can define a functor $\Ghat \maps \Int F \to \Int F'$ as follows.
\[
\begin{tikzcd}
    &(c,x)
    \arrow[dd, "{(f,g)}", swap]
    &&
    (Gc, \gn_c x)
    \arrow[dd, "{(Gf, \gn_d g)}"]
    \\
    \Ghat \maps
    && 
    \mapsto
    \\
    &(d,y)
    &&
    (Gd, \gn_d y)
\end{tikzcd}\]

The following result is well-known \cite{Borceux}:
\begin{thm}
    \label{thm:G_construction_functorial}
    There exists a unique functor, the \define{Grothendieck construction} 
    \[
        \textstyle{\Int} \maps \ICat \; \to \; \Cat
    \]
    sending any object $F$ to the category $\Int F$ and sending any morphism $G \maps F \to F'$ to the functor $\Ghat \maps \Int F \to \Int F'$.
\end{thm}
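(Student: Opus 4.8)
The plan is to verify directly that the assignment $F \mapsto \Int F$ on objects and $G \mapsto \Ghat$ on morphisms respects identities and composition, since the constructions on objects and morphisms have already been specified; uniqueness is immediate because a functor is determined by its values on objects and morphisms. First I would check that $\Ghat$ is genuinely a functor $\Int F \to \Int F'$ for each morphism $G = (G,\gn)$ in $\ICat$. This means confirming that the stated assignment sends an identity morphism $(1_c, 1_x)$ of $\Int F$ to the identity of $(Gc, \gn_c x)$ in $\Int F'$, and that it preserves composition of morphisms in $\Int F$. The latter is the one genuine calculation: given composable $(f',g') \maps (c,x) \to (d,y)$ and $(f,g) \maps (d,y) \to (e,z)$ in $\Int F$, I would expand both $\Ghat((f,g) \circ (f',g'))$ and $\Ghat(f,g) \circ \Ghat(f',g')$ using the composition formula \eqref{eq:composition} in $\Int F$ and $\Int F'$ respectively, and show they agree. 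The first components match trivially since $G$ is a functor, so the content is the second component, where naturality of $\gn$ (namely the square relating $\gn_c$, $\gn_d$, $F(f)$ and $F'(Gf)$) is exactly what is needed to move $\gn$ past the reindexing by $F'(Gf)$.

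Next I would check functoriality of $\Int$ itself, i.e.\ at the level of $\ICat$. For identities, the identity morphism on $(\C,F)$ is $(1_\C, 1_F)$, and I would verify that $\widehat{1_\C}$ is the identity functor on $\Int F$; this is direct from the definition of $\Ghat$ once $\gn$ is the identity natural transformation. For composition, given $(G,\gn) \maps F \to F'$ and $(G',\gn') \maps F' \to F''$ with composite $(G' \circ G, \hn)$ where $\hn_x = \gn'_{Gx} \circ \gn_x$, I would compare $\widehat{G' \circ G}$ with $\Ghat' \circ \Ghat$ on a typical object $(c,x)$ and morphism $(f,g)$. On objects this reduces to showing $(G'Gc, \hn_c x) = (G'Gc, \gn'_{Gc}(\gn_c x))$, which is just the definition of $\hn$; on morphisms the second component again requires unwinding how the composite natural transformation acts, and should fall out of the definitions together with naturality.

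The main obstacle, to the extent there is one, is the bookkeeping in the morphism-composition check for $\Ghat$: the second component lives in the fiber category $F'(Gd)$ (or its analogue), and one must be careful that the images $\gn_d g$, the reindexing $F'(Gf)$, and the composition law all compose in the correct fibers. I would handle this by writing out the two second components explicitly and invoking the naturality square of $\gn$ at the morphism $f \maps c \to d$, which equates $\gn_d \circ F(f)$ with $F'(Gf) \circ \gn_c$ as natural transformations between the relevant functors. Everything else is formal. I would close by remarking that since the values on objects and morphisms are fixed, the functor $\Int$ is unique, completing the proof.
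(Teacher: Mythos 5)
Your proposal is correct and follows essentially the same route as the paper's proof: you verify that each $\Ghat$ is a functor by checking preservation of composition in $\Int F$ via the naturality square of $\gn$, and then check that $\Int$ itself preserves composition (and identities) of morphisms in $\ICat$ by unwinding the definition of the composite natural transformation. The paper's proof performs exactly these two calculations (leaving the identity checks implicit, which you spell out), so there is nothing to correct.
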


\subsection{The monoidal Grothendieck construction}

Next we explain how to use the Grothendieck construction to build monoidal categories.

\begin{defn}
\label{defn:MC}
    Let $\MC$ be the category with small monoidal categories as objects and lax monoidal functors as morphisms.
\end{defn}

\begin{defn} 
\label{defn:MICat}
Let $\MICat$ be the category of \emph{lax monoidal functors into} $\Cat$, where:
\begin{itemize}
    \item objects are pairs $(\C,F)$ where $\C$ is a small monoidal category and $F \maps \C \to \Cat$ is a lax monoidal functor.
    \item morphisms from $(\C_1, F_1)$ to $(\C_2,G_2)$ 
    are pairs $(G,\gn)$ where $G \maps \C_1 \to \C_2$ is a lax monoidal functor and $\gn \maps F \To F' \circ G $ is a monoidal natural transformation:
    \[\begin{tikzcd}
    \C_1
    \arrow[dr, "F_1"]
    \arrow[dr, ""{name=F}, swap]
    \arrow[dd, "G", swap]
    \\&
    \Cat
    \\
    \C_2
    \arrow[ur, "F_2", swap]
    \arrow[ur, ""{name=F'}, pos=0.45]
    \arrow[Rightarrow, from = F, to = F', "\gn", swap]
    \end{tikzcd}\]
    \end{itemize}
\end{defn}

Our goal in this subsection is to refine the Grothendieck construction to a functor
\[\textstyle{\Int} \maps \MICat \; \to \MC .\]
Given $\C$ a monoidal category, and $F \maps \C \to \Cat$ a lax monoidal functor, we define a monoidal structure on $\Int F$. This construction makes use of every aspect of the monoidal structure on $\C$: the functor $\otimes \maps \C \times C \to \C$, the unit object $I \in \C$, and the natural isomorphisms $\alpha_{c,d,e} \maps (c \otimes d) \otimes e \to c \otimes (d \otimes e)$, $\lambda_c \maps I \otimes c \to c$, and $\rho_c \maps c \otimes I \to c$. It also uses the lax monoidal structure of $F$: the natural transformation $\Phi_{c,c'} \maps Fc \times Fc' \to F (c \otimes c')$ and the morphism $\phi \maps I_{\Cat} \to I$. 

First, given two objects $(c,x)$ and $(c',x')$ of $\Int F$, we define their tensor product by
\[(c,x) \otimes_F (c',x') = ( c\otimes c', \Phi_{c,c'}(x,x') ).\]
Next, consider two morphisms 
\begin{align*} 
    (f,g) &\maps (c,x) \to (d,y)
    \\(f',g') &\maps (c',x') \to (d',y')
\end{align*}
in $\Int F$. We take the first component of $(f,g) \otimes (f',g') \maps  (c\otimes c', \Phi_{c,c'}(x,x') ) \to (d \otimes d', \Phi_{d,d'}(y,y') )$ to be $f \otimes f'$. The second component must then be a morphism from $F (f \otimes f')( \Phi_{c,c'} (x,x') )$ to $\Phi_{d,d'}(y,y')$. To meet this condition we define the tensor product of morphisms in $\Int F$ by 
\begin{equation}
\label{eq:tensoring_morphisms_in_G_construction}
(f,g) \otimes_F (f',g') = (f \otimes f', \Phi_{d,d'}(g,g')). 
\end{equation}
Since $\Phi$ is a natural transformation, the diagram
\[
\begin{tikzcd}
    Fc \times Fc'
    \arrow[r,"{\Phi_{c,c'}}"]
    \arrow[d,"Ff \times Ff'",swap]
    &
    F(c \otimes c')
    \arrow[d,"F (f \otimes f')"]
    \\
    Fd \times Fd'
    \arrow[r,"\Phi_{d,d'}",swap]
    &
    F(d \otimes d')
\end{tikzcd}\]
commutes, so $\Phi_{d,d'}(g,g')$ is a morphism from $\Phi_{d,d'}(Ff \times Ff')(x,x') = F (f \otimes f')( \Phi_{c,c'} (x,x') )$ to $\Phi_{d,d'}(y,y')$ as required.

We define the unit object of $\Int F$ to be $I_F = (I, \phi)$. Then $I_F \otimes_F (c,x) = (I, \phi) \otimes_F (c,x) = (I \otimes c, \mu_{I, c}(\phi, x))$. We take the first component of the left unitor $\lambda^F_{(c,x)} \maps (I \otimes c, \mu_{I, c}(\phi, x)) \to (c,x)$ to be the map $\lambda_c \maps I \otimes c \to c$. The second component must then be a morphism from $F\lambda_c\Phi_{I,c}(\phi,x)$ to $x$. To meet this condition we define the left unitor for $\int F$ to be
\begin{equation}
\label{eq:left_unitor}
    \lambda^F_{(c,x)} = (\lambda_c, 1_x)
\end{equation}
Since $F$ is a lax monoidal functor, the diagram
\[
\begin{tikzcd}
    I_\Cat \times Fc
    \arrow[r,"\phi \times Fc"]
    \arrow[d,"\lambda^\Cat_{I_\Cat,Fc}",swap]
    &
    F(I) \times Fc
    \arrow[d,"{\Phi_{I,c}}"]
    \\
    Fc
    &
    F(I \otimes c)
    \arrow[l,"F\lambda_c"]
\end{tikzcd}\]
commutes, giving the equation
\[
    F\lambda_c \Phi_{I,c} (\phi, x) = x
\]
as required. Similarly, we define the right unitor to be 
\begin{equation}
\label{eq:right_unitor}
\rho^F_{(c,x)} = (\rho_c, 1_x). 
\end{equation}

We take the first component of the associator $\alpha^F_{(c,x), (d,y), (e,z)}$ to be $\alpha_{c,d,e}$. The second component must then be a morphism from $F\alpha_{c,d,e} \Phi_{c\otimes d, e}(\Phi_{c,d}(x, y),z)$ to $\Phi_{c,d\otimes e}(x,\Phi_{d,e}(y,z))$. 
However, these two objects are equal, since the diagram 
\[
\begin{tikzcd}
    (Fc \times Fd) \times Fe
    \arrow[r,"\Phi_{c,d} \times Fe"]
    \arrow[d,"\alpha^\Cat_{Fc,Fd,Fe}",swap]
    &
    F(c \otimes d) \times Fe
    \arrow[d,"\Phi_{c \otimes d, e}"]
    \\
    Fc \times (Fd \times Fe)
    \arrow[d,"Fc \times \Phi_{d,e}",swap]
    &
    F( (c \otimes d) \otimes e)
    \arrow[d,"F\alpha_{c,d,e}"]
    \\
    Fc \times F(d \otimes e)
    \arrow[r,"\Phi_{c, d \otimes e}",swap]
    &
    F(c \otimes (d \otimes e) )
\end{tikzcd}\]
commutes. Thus, we can meet this condition by defining the associator for $\Int F$ to be
\begin{equation}
\label{eq:associator}
    \alpha^F_{(c,x), (d,y), (e,z)} = (\alpha_{c,d,e}, 1_{\Phi_{c,d\otimes e}(x,\Phi_{d,e}(y,z))}).
\end{equation}

\begin{thm}
\label{thm:G_construction_for_mon_cats}
    If $F \maps \C \to \Cat$ is a lax monoidal functor then $\Int F$ becomes a monoidal category when equipped with the above tensor product, unit object, unitors and associator. 
\end{thm}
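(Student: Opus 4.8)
The plan is to verify each axiom of a monoidal category for the data defined above, exploiting a single structural observation: every structure morphism---the components of $\otimes_F$ as well as $\lambda^F$, $\rho^F$, and $\alpha^F$---has as its first component the corresponding morphism of $\C$, while its second component is either an identity or of the form $\Phi_{d,d'}(g,g')$. Consequently each axiom splits into a first-component equation, which is automatically one of the monoidal-category axioms for $\C$ and hence holds by hypothesis, and a second-component equation taking place inside a single category $F(-)$. I would organize the whole proof around discharging these second-component equations, the recurring tool being the bifunctoriality and naturality of the lax structure map $\Phi$.

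First I would check that $\otimes_F$ is a functor. Preservation of identities is immediate from \eqref{eq:tensoring_morphisms_in_G_construction}, since $\otimes$ in $\C$ and each functor $\Phi_{c,d}$ preserve identities. For preservation of composition I would expand both sides using the twisted composition formula \eqref{eq:composition} together with \eqref{eq:tensoring_morphisms_in_G_construction}: the first components agree because $\otimes \maps \C \times \C \to \C$ is a bifunctor, and the second components agree after using bifunctoriality of $\Phi$ to split a composite of the form $\Phi_{e,e'}(g_2 \circ F(f_2)(g_1),\, g_2' \circ F(f_2')(g_1'))$ and then invoking the naturality of $\Phi$ to rewrite $F(f_2 \otimes f_2')\bigl(\Phi_{d,d'}(g_1,g_1')\bigr)$ as $\Phi_{e,e'}\bigl(F f_2(g_1), F f_2'(g_1')\bigr)$.

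That $\lambda^F$, $\rho^F$, and $\alpha^F$ are isomorphisms is immediate from the lemma above, since in each case the first component is an isomorphism in $\C$ and the second component, by \eqref{eq:left_unitor}--\eqref{eq:associator}, is an identity. For naturality, the first-component equations are the naturality of $\lambda$, $\rho$, and $\alpha$ in $\C$, while the second-component equations are where the work lies. For instance, naturality of $\lambda^F$ along $(f,g) \maps (c,x) \to (d,y)$ reduces, after applying \eqref{eq:composition} to each side, to the identity $F\lambda_d\,\Phi_{I,d}(1_\phi, g) = g$, which is precisely the left-unitor coherence square of the lax monoidal functor $F$ evaluated on the morphism $g$ rather than on an object. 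The analogous second-component identities for $\rho^F$ and $\alpha^F$ follow in the same way from the right-unitor square and the associativity square of $F$, read on morphisms.

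Finally I would verify the triangle and pentagon identities. Their first-component equations are exactly the triangle and pentagon axioms of $\C$, which hold by hypothesis. For the second components the key remark is that every structure morphism has identity second component, and a composite of such morphisms again has identity second component: by \eqref{eq:composition} the second component of a composite $(\text{iso}, 1) \circ (\text{iso}', 1')$ is $1 \circ F(\text{iso})(1') = 1$, using that $F$ applied to a morphism is a functor and so preserves identities. Hence both sides of the triangle, and both sides of the pentagon, have identity second component over equal objects and therefore agree. The main obstacle throughout is precisely the twisted composition formula \eqref{eq:composition}: because composing $(f,g)$ after $(f',g')$ inserts $F(f)$ in front of $g'$, none of the second-component equations is literally trivial, and each must be massaged using the naturality of $\Phi$ and the coherence squares of $F$ at the level of morphisms; keeping the indices straight in the naturality computation for $\alpha^F$, which carries the most variables, is the most delicate bookkeeping.
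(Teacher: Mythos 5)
Your proof is correct and follows essentially the same route as the paper's: every structure morphism of $\Int F$ has identity second component, so each axiom splits into a first-component equation holding by the monoidal axioms of $\C$ and a second-component equation settled by functoriality/naturality of $\Phi$ and the coherence squares of $F$. Your write-up is in fact more thorough than the paper's own proof, which only displays the triangle and pentagon for $\C$ and asserts the corresponding diagrams for $\Int F$, leaving functoriality of $\otimes_F$ and naturality of $\alpha^F$, $\lambda^F$, $\rho^F$ implicit (the paper's remark following the proof concedes exactly the point you emphasize about the twisted composition formula).
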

\begin{proof}
    Since $C$ is a monoidal category, the following diagrams commute.
    \[\begin{tikzcd}
        (c \otimes I) \otimes d
        \arrow[dr,"\rho_c \otimes d"]
        \arrow[d,"\alpha_{c,I,d}",swap]
        \\
        c \otimes (I \otimes d)
        \arrow[r,"c \otimes \lambda_d",swap]
        &
        c \otimes d
    \end{tikzcd}\]
    \[\begin{tikzcd}
        ((b\otimes c) \otimes d) \otimes e
        \arrow[r,"\alpha_{b \otimes c,d,e}"]
        \arrow[d,"\alpha_{b,c,d} \otimes e",swap]
        &
        (b\otimes c) \otimes (d \otimes e)
        \arrow[dd,"\alpha_{b,c,d \otimes e}"]
        \\
        (b \otimes (c \otimes d)) \otimes e
        \arrow[d,"\alpha_{b,c \otimes d, e}",swap]
        \\
        b \otimes ((c\otimes d) \otimes e)
        \arrow[r,"b \otimes \alpha_{c,d,e}",swap]
        &
        b \otimes (c \otimes (d\otimes e))
    \end{tikzcd}\]
    
    It then follows that the corresponding diagrams also commute for $\Int F$, $\alpha^F$, $\lambda^F$, and $\rho^F$.
    
    \[\begin{tikzcd}
        ((c,x) \otimes_F I_F) \otimes_F (d,y)
        \arrow[dr,"\rho^F_{(c,x)} \otimes_F {(d,y)}"]
        \arrow[d,"\alpha^F_{(c,x),I_F,(d,y)}",swap]
        \\
        (c,x) \otimes_F (I_F \otimes_F (d,y))
        \arrow[r,"{(c,x)} \otimes_F \lambda^F_{(d,y)}",swap, outer sep = 2pt]
        &
        (c,x) \otimes_F (d,y)
    \end{tikzcd}\]
    \[\begin{tikzcd}
        (((b,w)\otimes_F (c,x)) \otimes_F (d,y)) \otimes_F (e,z)
        \arrow[r,"\alpha^F_{(b,w) \otimes_F (c,x),(d,y),(e,z)}", outer sep = 3pt]
        \arrow[d,"\alpha^F_{(b,w),(c,x),(d,y)} \otimes_F {(e,z)}",swap]
        &
        ((b,w)\otimes_F (c,x)) \otimes_F ((d,y) \otimes_F (e,z))
        \arrow[dd,"\alpha^F_{(b,w),(c,x),(d,y) \otimes_F (e,z)}"]
        \\
        ((b,w) \otimes_F ((c,x) \otimes_F (d,y))) \otimes_F (e,z)
        \arrow[d,"\alpha^F_{(b,w),(c,x) \otimes_F (d,y), (e,z)}",swap]
        \\
        (b,w) \otimes_F (((c,x)\otimes_F (d,y)) \otimes_F (e,z))
        \arrow[r,"{(b,w)} \otimes_F \alpha^F_{(c,x),(d,y),(e,z)}",swap, outer sep = 3pt]
        &
        (b,w) \otimes_F ((c,x) \otimes_F ((d,y)\otimes_F (e,z)))
    \end{tikzcd}\] 
\end{proof}
 
Next we show that a morphism in $\MICat$ gives a lax monoidal functor. Recall that such a morphism is a quadruple $(G,\Gamma, \gamma,\gn) \maps (F,\Phi,\phi) \to (F', \Phi',\phi')$ where
\begin{align*}
    G &\maps C \to C'\\
    \Gamma_{c,d} &\maps Gc \otimes' Gd \to G(c \otimes d) \\
    \gamma &\maps I_{\C'} \to G(I) \\
    \gn &\maps F \To F'G.
\end{align*} 
We already know how to get a functor $\Ghat \maps \Int F \to \Int F'$ from this data.  We next define $\Gahat$ and $\gahat$ to make $\Ghat$ into a lax monoidal functor:
\begin{equation}
\label{eq:lax_tensor}
    \Gahat_{(c,x),(d,x')} = (\Gamma_{c,d},1),
\end{equation}
\begin{equation}
\label{eq:lax_unit}
    \gahat = (\gamma, 1).
\end{equation}
One can check that these have the required source and target.

\begin{thm}
\label{thm:G_construction_functorial_for_mon_cats}
    There exists a unique functor, the \define{monoidal Grothendieck \hfill \break construction}
    \[\textstyle{\Int} \maps \MICat \; \to \; \MC,\]
    that sends any object $F$ to the monoidal category $\Int F$ given in Thm.\ \ref{thm:G_construction_for_mon_cats} and sends any morphism $G \maps F \to F'$ to the lax monoidal functor $\Ghat \maps \Int F \to \Int F'$ defined above. 
\end{thm}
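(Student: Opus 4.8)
The plan is to lean on the two results already in hand. Theorem~\ref{thm:G_construction_functorial} supplies the underlying functor $\Int \maps \ICat \to \Cat$ together with the fact that $G \mapsto \Ghat$ preserves composition and identities of the underlying (non-monoidal) functors, and Theorem~\ref{thm:G_construction_for_mon_cats} shows that on objects $\Int F$ is a monoidal category. Since $\MICat$ sits over $\ICat$ and $\MC$ over $\Cat$ by forgetting monoidal structure, all that is genuinely new is to (i) confirm that the structure maps $\Gahat$ and $\gahat$ defined in (\ref{eq:lax_tensor}) and (\ref{eq:lax_unit}) actually make $\Ghat$ into a \emph{lax monoidal} functor, and (ii) confirm that these structure maps are compatible with composition and identities, so that $\Int$ lands in $\MC$ and is functorial there. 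Uniqueness is then immediate, because a functor is determined by its prescribed action on objects and morphisms; the content is existence and functoriality.

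For step (i) I would check the three defining conditions of a lax monoidal functor in turn. First, naturality of $\Gahat_{(c,x),(d,x')}$: given $(f,g)$ and $(f',g')$ in $\Int F$, the two legs of the required square in $\Int F'$ agree in their first components by the naturality of $\Gamma$ (which holds because $G$ is lax monoidal), while their second components, traced through the composition formula (\ref{eq:composition}) and the tensor-of-morphisms formula (\ref{eq:tensoring_morphisms_in_G_construction}), reduce to the monoidal-naturality equation $F'(\Gamma_{c,d})(\Phi'_{Gc,Gd}(\gn_c x,\gn_d x')) = \gn_{c\otimes d}(\Phi_{c,d}(x,x'))$ recorded above, since $\Gahat$ carries an identity in its second slot. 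Second, the associativity coherence hexagon for $\Ghat$: its first components are instances of the coherence hexagon for $\Gamma$, and because the associators $\alpha^F,\alpha^{F'}$ (by (\ref{eq:associator})) and the maps $\Gahat$ all have identity second components, the entire diagram reduces to its first component. Third, the left and right unit coherence diagrams reduce in the same way to the unit coherences for $G$, since $\lambda^F=(\lambda,1)$, $\rho^F=(\rho,1)$ by (\ref{eq:left_unitor})--(\ref{eq:right_unitor}) and $\gahat=(\gamma,1)$.

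For step (ii), the composite in $\MICat$ of $G=(G,\Gamma,\gamma,\gn)$ with $G'=(G',\Gamma',\gamma',\gn')$ has underlying lax monoidal functor $G'\circ G$, natural transformation $\hn_x=\gn'_{Gx}\circ\gn_x$, and lax structure maps the standard composites, e.g.\ $\Gamma''_{c,d}=G'(\Gamma_{c,d})\circ\Gamma'_{Gc,Gd}$. On the other hand, the lax monoidal functor $\Ghat'\circ\Ghat$ has tensor constraint $\Ghat'(\Gahat_{(c,x),(d,x')})\circ\Gahat'_{\Ghat(c,x),\Ghat(d,x')}$, and similarly for the unit. As every map appearing here has identity second component, the composite again has identity second component, while its first component is exactly $\Gamma''_{c,d}$ (respectively $\gamma''$). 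Hence $\widehat{G'\circ G}=\Ghat'\circ\Ghat$ as lax monoidal functors, which is precisely preservation of composition; preservation of identities is immediate, since the identity morphism has $\gn=1$ and identity structure maps, so that $\widehat{1_F}$ is the identity lax monoidal functor on $\Int F$.

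I expect the only real obstacle to be the bookkeeping forced by the Grothendieck composition formula (\ref{eq:composition}), whose nontrivial term $F(f)(g')$ must be tracked while verifying the naturality square and the composition law. The saving feature, which makes the monoidal content light, is that every monoidal structure map we have defined on $\Int F$ and on $\Ghat$ carries an identity in its second component; consequently all of these verifications collapse onto their first components, where they are exactly the coherence and naturality statements already known to hold in $\C$, in $\C'$, and for the lax monoidal functor $G$. The work is thus in confirming that no second-component obstruction is introduced by the composition formula, after which the result follows.
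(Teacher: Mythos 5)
Your proposal is correct and takes essentially the same approach as the paper: it builds on Thms.~\ref{thm:G_construction_functorial} and \ref{thm:G_construction_for_mon_cats}, then verifies naturality of $\Gahat$, the lax coherence diagrams, and preservation of composition by observing that every structure map ($\alpha^F$, $\lambda^F$, $\rho^F$, $\Gahat$, $\gahat$) carries an identity in its second component, so each check collapses onto its first component where it is a known coherence for $\C$, $\C'$, or the lax monoidal functor $G$. This is precisely the strategy of the paper's proof, including the composition computation in which the composite structure maps again have identity second components.
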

\begin{proof}
    Uniqueness follows because the theorem specifies $\int$ on objects and morphisms.
    For existence, we need to check that $\Ghat$ is a lax monoidal functor and that $\int$ preserves composition and identities.
    
    We already know that $\Ghat $ is a functor. We start by checking that $\Gahat$ is a natural transformation. Let $f \maps c\to d$, $f' \maps c'\to d'$ be morphisms in $\C$. Since $\Gamma$ is a natural transformation, the following diagram commutes
    \[\begin{tikzcd}
        Gc \otimes' Gc'
        \arrow[r,"\Gamma_{c,c'}"]
        \arrow[d,"Gf \otimes' Gf'",swap]
        &
        G(c \otimes c')
        \arrow[d,"G(f \otimes f')"]
        \\
        Gd \otimes' Gd'
        \arrow[r,"\Gamma_{d,d'}",swap]
        &
        G(d \otimes d')
    \end{tikzcd}\]
    giving the equation $G(f\otimes f') \circ \Gamma_{c,c'} = \Gamma_{d,d'} \circ (Gf \otimes' Gf')$. Since $\gn$ is a monoidal natural transformation, the following diagram commutes 
    \[\begin{tikzcd}
        Fd \times Fd'
        \arrow[r,"\gn_d \times \gn_{d'}"]
        \arrow[d,"\Phi_{d,d'}",swap]
        &
        F'Gd \times F'Gd'
        \arrow[d,"F'\Gamma_{d,d'}\Phi'_{Gd,Gd'}"]
        \\
        F(d \otimes d')
        \arrow[r,"\gn_{d\otimes d'}",swap]
        &
        F'G(d \otimes d')
    \end{tikzcd}\]
    giving the equation $\gn_{d\otimes d'} \Phi_{d,d'} = F'\Gamma_{d,d'}\Phi'_{Gd,Gd'} (\gn_d \times \gn_{d'})$. Then
    \begin{align*}
        \Ghat ((f,g) \otimes_F (f',g')) \circ \Gahat_{(c,x),(c',x')}
        &= \Ghat (f \otimes f', \Phi_{d,d'}(g,g')) \circ (\Gamma_{c,c'}, 1)
        \\&= (G(f \otimes f'), \gn_{d\otimes d'} \Phi_{d,d'}(g,g')) \circ (\Gamma_{c,c'}, 1)
        \\&= (G(f \otimes f') \circ \Gamma_{c,c'}, \gn_{d\otimes d'} \Phi_{d,d'}(g,g'))
        \\&= (\Gamma_{d,d'} \circ (Gf \otimes' Gf'), F'\Gamma_{d,d'}\Phi'_{Gd,Gd'} (\gn_dg,\gn_{d'}g'))
        \\&= (\Gamma_{d,d'},1) \circ (Gf\otimes Gf', \Phi'_{Gd,Gd'}(\gn_dg,\gn_{d'}g')
        \\&= (\Gamma_{d,d'},1) \circ (Gf,\gn_dg) \otimes_{F'} (Gf',\gn_{d'}g')
        \\&= \Gahat_{(d,y),(d',y')} \circ \Ghat(f,g) \otimes_{F'} \Ghat (f',g')
    \end{align*}
    which tells us that the following diagram commutes.
    \[\begin{tikzcd}
        \Ghat  (c,x) \otimes_{F'} \Ghat  (c',x')
        \arrow[r,"\Gahat_{(c,x),(c',x')}", outer sep = 4pt]
        \arrow[d,"{\Ghat  (f,g) \otimes_{F'} \Ghat (f',g')}",swap]
        &
        \Ghat  ((c,x) \otimes_F (c',x'))
        \arrow[d,"{\Ghat ((f,g) \otimes_F (f',g'))}"]
        \\
        \Ghat  (d,y) \otimes_{F'} \Ghat  (d',y')
        \arrow[r,"\Gahat_{(d,y),(d',y')}",swap, outer sep = 4pt]
        &
        \Ghat  ((d,y) \otimes_F (d',y'))
    \end{tikzcd}\]
    
    Next, we check that $\Gahat$ satisfies the necessary conditions to be a lax structure map. Since $(G, \Gamma)$ is a lax monoidal functor, the following diagrams commute.
    \[\begin{tikzcd}
        (Gc \otimes' Gd) \otimes' Ge
        \arrow[r,"\alpha'_{Gc,Gd,Ge}"]
        \arrow[d,"\Gamma_{c,d} \otimes' Ge",swap]
        &
        Gc \otimes' (Gd \otimes' Ge)
        \arrow[d,"Gc \otimes' \Gamma_{d,e}"]
        \\
        G(c \otimes d) \otimes' Ge
        \arrow[d,"\Gamma_{c \otimes d, e}",swap]
        &
        Gc \otimes' G(d \otimes e)
        \arrow[d,"\Gamma_{c, d \otimes e}"]
        \\
        G((c \otimes d) \otimes e)
        \arrow[r,"G\alpha_{c,d,e}",swap]
        &
        G(c \otimes (d \otimes e))
    \end{tikzcd}\] 
    \[\begin{tikzcd}
        I' \otimes' Gc
        \arrow[r,"\gamma \otimes' Gc"]
        \arrow[d,"\lambda'_{Gc}",swap]
        &
        GI \otimes' Gc
        \arrow[d,"\Gamma_{I,c}"]
        \\
        Gc
        &
        G(I \otimes c)
        \arrow[l,"G\lambda_c"]
    \end{tikzcd}\] 
    \[\begin{tikzcd}
        Gc \otimes' I'
        \arrow[r,"Gc \otimes' \gamma"]
        \arrow[d,"\rho'_{Gc}",swap]
        &
        Gc \otimes' GI
        \arrow[d,"\Gamma_{c,I}"]
        \\
        Gc
        &
        G(c \otimes I)
        \arrow[l,"G\rho_c"]
    \end{tikzcd}\] 
    It then follows that the corresponding diagrams also commute for $\Ghat$, $\Gahat$, and $\gahat$.
    \[\begin{tikzcd}
        (\Ghat (c,x) \otimes_{F'} \Ghat (d,y)) \otimes_{F'} \Ghat (e,z)
        \arrow[r,"\alpha^{F'}_{\Ghat (c,x),\Ghat (d,y),\Ghat (e,z)}", outer sep = 5pt]
        \arrow[d,"\Gahat_{(c,x),(d,y)} \otimes_{F'} \Ghat {(e,z)}",swap]
        &
        \Ghat (c,x) \otimes_{F'} (\Ghat (d,y) \otimes_{F'} \Ghat (e,z))
        \arrow[d,"\Ghat {(c,x)} \otimes_{F'} \Gahat_{(d,y),(e,z)}"]
        \\
        \Ghat((c,x) \otimes (d,y)) \otimes_{F'} \Ghat (e,z)
        \arrow[d,"\Gahat_{(c,x) \otimes_F (d,y), (e,z)}",swap]
        &
        \Ghat (c,x) \otimes_{F'} \Ghat((d,y) \otimes_F (e,z))
        \arrow[d,"\Gahat_{(c,x), (d,y) \otimes_F (e,z)}"]
        \\
        \Ghat(((c,x) \otimes_F (d,y)) \otimes_F (e,z))
        \arrow[r,"\Ghat\alpha^F_{(c,x),(d,y),(e,z)}",swap, outer sep = 4pt]
        &
        \Ghat((c,x) \otimes_F ((d,y) \otimes_F (e,z)))
    \end{tikzcd}\] 
    \[\begin{tikzcd}
        I_{F'} \otimes_{F'} \Ghat (c,x)
        \arrow[r,"\gahat \otimes_{F'} \Ghat {(c,x)}", outer sep = 5pt]
        \arrow[d,"\lambda^{F'}_{\Ghat (c,x)}",swap]
        &
        \Ghat I_F \otimes_{F'} \Ghat (c,x)
        \arrow[d,"\Gahat_{I,(c,x)}"]
        \\
        \Ghat (c,x)
        &
        \Ghat(I_F \otimes_F (c,x))
        \arrow[l,"\Ghat\lambda^F_{(c,x)}"]
    \end{tikzcd}\] 
    \[\begin{tikzcd}
        \Ghat (c,x) \otimes_{F'} I_{F'}
        \arrow[r,"\Ghat {(c,x)} \otimes_{F'} \gahat", outer sep = 5pt]
        \arrow[d,"\rho^{F'}_{\Ghat(c,x)}",swap]
        &
        \Ghat (c,x) \otimes_{F'} \Ghat I
        \arrow[d,"\Gahat_{(c,x),I}"]
        \\
        \Ghat (c,x)
        &
        \Ghat ((c,x) \otimes_F I)
        \arrow[l,"\Ghat\rho^F_{(c,x)}"]
    \end{tikzcd}\] 

    Finally, we check that composition is preserved.
    \begin{align*}
        ( (G,\Gamma,\gamma,\gn) \circ (G',\Gamma',\gamma',\gn') ) ^{\widehat{}}
        &= (G \circ G', G\Gamma' \circ \Gamma_{G'}, G\gamma' \circ \gamma,\gn_G \circ \gn') ^{\widehat{}}
        \\&= (\widehat{G\circ G'}, (G\Gamma' \circ \Gamma_{G'},1), (G\gamma' \circ \gamma,1))
        \\&= (\Ghat \circ \Ghat', (G\Gamma',1) \circ (\Gamma_{G'},1), (G\gamma',1) \circ (\gamma,1))
        \\&= (\Ghat \circ \Ghat', \Ghat(\Gamma',1) \circ (\Gamma_{G'},1), \Ghat(\gamma',1) \circ (\gamma,1))
        \\&= (\Ghat \circ \Ghat', \Ghat\Gahat' \circ \Gahat_{\Ghat'}, \Ghat \gahat' \circ \gahat)
        \\&= (\Ghat, \Gahat, \gahat) \circ (\Ghat', \Gahat', \gahat')
        \\&= (G,\Gamma,\gamma,\gn)^{\widehat{}} \circ (G',\Gamma',\gamma',\gn')^{\widehat{}}
        \qedhere
    \end{align*}
\end{proof}

Under some conditions the Grothendieck construction gives \emph{strict} monoidal categories and functors:

\begin{prop}
\label{prop:G_construction_for_strict_mon_cats}
    If $C$ is a strict monoidal category and $F \maps \C \to \Cat$ is a lax monoidal functor, then $\Int F$ as defined in Thm.\ \ref{thm:G_construction_for_mon_cats} is a strict monoidal category. 
\end{prop}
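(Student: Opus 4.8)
The plan is to show that each of the three structure isomorphisms $\alpha^F$, $\lambda^F$, $\rho^F$ produced in Thm.~\ref{thm:G_construction_for_mon_cats} is in fact an identity morphism, which is precisely what strictness demands. Recall from \eqref{eq:associator}, \eqref{eq:left_unitor} and \eqref{eq:right_unitor} that they have the form $\alpha^F_{(c,x),(d,y),(e,z)}=(\alpha_{c,d,e},1)$, $\lambda^F_{(c,x)}=(\lambda_c,1_x)$ and $\rho^F_{(c,x)}=(\rho_c,1_x)$, where $\alpha,\lambda,\rho$ are the structure isomorphisms of $\C$. The whole point is that strictness of $\C$ makes the first components identities: $\alpha_{c,d,e}=1$, $\lambda_c=1$, $\rho_c=1$. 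Since $F$ is a functor it sends these identities to identity functors, and I would use this to force the second components into place as well.

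First I would treat associativity. By strictness of $\C$ we have $(c\otimes d)\otimes e=c\otimes(d\otimes e)$ on objects, so the two iterated tensor products $((c,x)\otimes_F(d,y))\otimes_F(e,z)$ and $(c,x)\otimes_F((d,y)\otimes_F(e,z))$ have equal first components; their second components are $\Phi_{c\otimes d,e}(\Phi_{c,d}(x,y),z)$ and $\Phi_{c,d\otimes e}(x,\Phi_{d,e}(y,z))$. The associativity coherence diagram for the lax monoidal functor $F$ (the one displayed just before \eqref{eq:associator}) gives $F\alpha_{c,d,e}\,\Phi_{c\otimes d,e}(\Phi_{c,d}(x,y),z)=\Phi_{c,d\otimes e}(x,\Phi_{d,e}(y,z))$; with $\alpha_{c,d,e}=1$ and hence $F\alpha_{c,d,e}=1$, these two objects coincide. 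Thus $\otimes_F$ is strictly associative on objects, the source and target of $\alpha^F$ agree, and $\alpha^F=(1,1)$ is an identity. Because that coherence square is a commuting square of functors, the same equation holds verbatim on morphisms, so combining it with the tensor-of-morphisms formula \eqref{eq:tensoring_morphisms_in_G_construction} shows $\otimes_F$ is strictly associative on morphisms too.

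Next I would handle the unit object $I_F=(I,\phi)$ in the same spirit. Strictness gives $I\otimes c=c=c\otimes I$ on objects, and the left- and right-unit coherence diagrams for $F$ yield $F\lambda_c\,\Phi_{I,c}(\phi,x)=x$ and $F\rho_c\,\Phi_{c,I}(x,\phi)=x$; with $\lambda_c=\rho_c=1$ these become $\Phi_{I,c}(\phi,x)=x$ and $\Phi_{c,I}(x,\phi)=x$. Hence $I_F\otimes_F(c,x)=(c,x)=(c,x)\otimes_F I_F$ on objects, and reading the same coherence squares on morphisms (again via \eqref{eq:tensoring_morphisms_in_G_construction}) shows $I_F$ is a strict unit for morphisms as well; consequently $\lambda^F=(1,1_x)$ and $\rho^F=(1,1_x)$ are identities.

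The only genuine care in this argument, and the step I expect to be the main obstacle, is passing from the object-level equalities to the corresponding morphism-level equalities needed for strict associativity and unitality of $\otimes_F$ on arrows. This is not an extra hypothesis but a bookkeeping point: every coherence condition for a lax monoidal functor is an equality of natural transformations, i.e.\ of functors out of a product category, so each such equation holds simultaneously on objects and on morphisms. Tracking the second components through the composition rule \eqref{eq:composition} and the tensor rule \eqref{eq:tensoring_morphisms_in_G_construction} then reduces every required identity to the $\C$-level fact $F(\mathrm{id})=\mathrm{id}$, completing the proof that $\Int F$ is strict.
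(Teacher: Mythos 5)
Your proof is correct and takes essentially the same approach as the paper's: both rest on Eq.\ \ref{eq:left_unitor}, Eq.\ \ref{eq:right_unitor}, and Eq.\ \ref{eq:associator}, which show that the structure isomorphisms of $\Int F$ are pairs (structure isomorphism of $\C$, identity), so strictness of $\C$ forces them to be identities. Your additional verification that the relevant source and target objects literally coincide---via the lax coherence diagrams for $F$, with $F(\mathrm{id})=\mathrm{id}$---is exactly the bookkeeping that the paper's two-sentence proof leaves implicit.
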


\begin{proof}
    This follows from Eq.\ \ref{eq:left_unitor} for the left unitor, Eq.\ \ref{eq:right_unitor} for the right unitor, and Eq.\ \ref{eq:associator} for the associator in $\Int F$. These isomorphisms are all built from the corresponding isomorphisms in $\C$ in such a way that if $\C$ is strict monoidal, so is $\Int F$.
\end{proof}

\begin{prop}
\label{prop:G_construction_for_strict_mon_functors}
    If
    \[\begin{tikzcd}
        \C
        \arrow[dr, "F"]
        \arrow[dr, ""{name=F}, swap]
        \arrow[dd, "G", swap]
        \\&
        \Cat
        \\
        \C'
        \arrow[ur, "F'", swap]
        \arrow[ur, ""{name=F'}, pos=0.43]
        \arrow[Rightarrow, from = F, to = F', "\gn", swap]
    \end{tikzcd}\]
    is a morphism in $\MICat$ such that $G$ is a strict monoidal functor, then $\Ghat \maps \Int F \to \Int F'$ as defined in Thm.\ \ref{thm:G_construction_functorial_for_mon_cats} is a strict monoidal functor. 
\end{prop}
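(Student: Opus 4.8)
The plan is to show that when $G$ is strict monoidal, the two lax structure maps of $\Ghat$---the \laxstructuremap{} $\Gahat$ from Eq.\ \ref{eq:lax_tensor} and the unit map $\gahat$ from Eq.\ \ref{eq:lax_unit}---both collapse to identity morphisms in $\Int F'$. Recall that $G$ being strict monoidal means precisely that its coherence maps $\Gamma_{c,d} \maps Gc \otimes' Gd \to G(c \otimes d)$ and $\gamma \maps I_{\C'} \to G(I)$ are identity morphisms, so that $Gc \otimes' Gd = G(c \otimes d)$ and $I_{\C'} = G(I)$ hold on the nose. Since $\Ghat$ is already a lax monoidal functor by Thm.\ \ref{thm:G_construction_functorial_for_mon_cats}, all coherence axioms hold automatically, and it suffices to verify that its structure constraints are identities.

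First I would substitute $\Gamma_{c,d} = 1$ and $\gamma = 1$ into the definitions $\Gahat_{(c,x),(d,x')} = (\Gamma_{c,d}, 1)$ and $\gahat = (\gamma, 1)$, obtaining $(1,1)$ in each case, so both constraints have identity components. The only point requiring any work is confirming that these are genuine \emph{identity} morphisms of $\Int F'$, i.e.\ that their source and target objects literally coincide.

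For the \laxstructuremap, the source is $\Ghat(c,x) \otimes_{F'} \Ghat(d,x') = (Gc \otimes' Gd, \Phi'_{Gc,Gd}(\gn_c x, \gn_{d} x'))$ and the target is $\Ghat((c,x)\otimes_F (d,x')) = (G(c\otimes d), \gn_{c\otimes d}\Phi_{c,d}(x,x'))$. The first components agree because $G$ is strict. For the second components I would invoke the equation coming from the monoidal naturality of $\gn$ established in the proof of Thm.\ \ref{thm:G_construction_functorial_for_mon_cats}, namely $F'(\Gamma_{c,d})(\Phi'_{Gc,Gd}(\gn_c x, \gn_d x')) = \gn_{c\otimes d}(\Phi_{c,d}(x,x'))$. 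Since $\Gamma_{c,d}=1$ forces $F'(\Gamma_{c,d}) = 1$, this degenerates to $\Phi'_{Gc,Gd}(\gn_c x, \gn_d x') = \gn_{c\otimes d}(\Phi_{c,d}(x,x'))$, exactly the required equality of second components. An identical argument using the unit equation $F'(\gamma)(\phi') = \gn_I \phi$, which becomes $\phi' = \gn_I\phi$ once $\gamma=1$, shows that $I_{F'} = \Ghat(I_F)$.

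Having matched both source and target objects, each of $\Gahat$ and $\gahat$ is an identity morphism of $\Int F'$, so $\Ghat$ is strict monoidal. I do not anticipate a genuine obstacle here: the argument is essentially bookkeeping, and the one substantive input---that the second components match---is handed to us by the naturality equations derived earlier, which degenerate precisely because applying $F'$ to the now-trivial coherence maps of $G$ yields identities. The closest thing to a subtlety is being careful to distinguish ``the components are identities'' from ``the maps are identities,'' which is why the object-matching step is the part worth spelling out.
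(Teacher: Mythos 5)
Your proof is correct and follows essentially the same route as the paper's, which simply observes that the structure maps $\Gahat = (\Gamma_{c,d},1)$ and $\gahat = (\gamma,1)$ of Eqs.\ \ref{eq:lax_tensor} and \ref{eq:lax_unit} are built from the coherence maps of $G$, so strictness of $G$ transfers to $\Ghat$. The only difference is that you spell out the object-matching step (that the degenerate naturality equations force source and target of these constraints to coincide, so $(1,1)$ really is an identity morphism of $\Int F'$), which the paper leaves implicit.
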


\begin{proof}
    This follows from Eq.\ \ref{eq:lax_tensor} and Eq.\ \ref{eq:lax_unit}, which give the morphisms describing how $\Ghat$ laxly preserves of the tensor product and unit for the tensor product. These morphisms are built from the corresponding morphisms for $G$ in such a way that if $G$ is strict monoidal, so is $\Ghat$.
\end{proof}

\subsection{The braided Grothendieck construction}

Next we consider the braided case.

\begin{defn}
\label{defn:BMC}
    Let $\BMC$ be the category with small braided monoidal categories as objects and lax braided monoidal functors as morphisms. 
\end{defn}

\begin{defn} 
\label{defn:BMICat}
Let $\BMICat$ be the category where:
\begin{itemize}
    \item objects are pairs $(\C,G)$ where $\C$ is a small braided monoidal category and $G \maps \C \to \Cat$ is a lax braided monoidal functor.
    \item morphisms from $(\C_1, G_1)$ to $(\C_2,G_2)$ are pairs $(G,\gn)$ where $G \maps \C_1 \to \C_2$ is a lax braided monoidal functor and $\gn \maps F \To F' \circ G $ is a braided monoidal natural transformation:
    \[\begin{tikzcd}
    \C
    \arrow[dr, "F"]
    \arrow[dr, ""{name=F}, swap]
    \arrow[dd, "G", swap]
    \\&
    \Cat
    \\
    \C'
    \arrow[ur, "F'", swap]
    \arrow[ur, ""{name=F'}, pos=0.43]
    \arrow[Rightarrow, from = F, to = F', "\gn", swap]
\end{tikzcd}\]
    \end{itemize}
\end{defn}

Let $\C$ be a braided monoidal category with  braiding $B_{c,d} \maps c \otimes d \to d \otimes c$. Let $F \maps \C \to \Cat$ a lax braided monoidal functor with lax structure map $\Phi$, so that the following diagram commutes:
\[
\begin{tikzcd}
    Fc\times Fd
    \arrow[r,"\Phi_{c,d}"]
    \arrow[d,"B_{c,d}",swap]
    &
    F(c \otimes d)
    \arrow[d,"FB_{c,d}"]
    \\
    Fd \times Fc
    \arrow[r,"\Phi_{d,c}",swap]
    &
    F(d \otimes c).
\end{tikzcd}\]
We claim that in this situation we can make $\Int F$ into a braided monoidal category, giving it a braiding 
\[B^F_{(c,x),(d,y)} \maps (c \otimes d, \Phi_{c,d}(x,y)) \to (d \otimes c, \Phi_{d,c}(y,x)) .\]
We take the first component of this morphism to be $B_{c,d}$.  The second component must then be a morphism from $FB_{c,d}(\Phi_{c,d}(x,y))$ to $\Phi_{d,c}(y,x)$, but 
\begin{align*} 
    FB_{c,d}(\Phi_{c,d}(x,y)) 
    &= \Phi_{d,c}(B_{c,d}(x,y)) 
    \\&= \Phi_{d,c}(y,x).
\end{align*}
so if we define the braiding in $\int F$ by
\[B^F_{(c,x),(d,y)} = (B_{c,d}, 1)\]
then this condition is met.

\begin{thm}
\label{thm:G_construction_for_braided_cats}
    If $F \maps \C \to \Cat$ is a lax braided monoidal functor, then $\Int F$ made monoidal as in Thm.\ \ref{thm:G_construction_for_mon_cats} and given the braiding $B^F$ is a braided monoidal category.
\end{thm}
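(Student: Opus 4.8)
The plan is to build on Thm.~\ref{thm:G_construction_for_mon_cats}, which already makes $\Int F$ monoidal, and to verify the two remaining requirements for a braiding: that $B^F$ is a natural isomorphism, and that it satisfies the two hexagon axioms. Invertibility of each $B^F_{(c,x),(d,y)} = (B_{c,d},1)$ is immediate from the lemma on isomorphisms in $\Int F$, since $B_{c,d}$ is invertible in $\C$ and the second component is an identity; so the real content lies in naturality and the hexagons.

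First I would check naturality. Given morphisms $(f,g)\maps (c,x)\to(c',x')$ and $(f',g')\maps (d,y)\to(d',y')$, I would expand both legs of the naturality square using the composition formula~(\ref{eq:composition}) and the tensor-of-morphisms formula~(\ref{eq:tensoring_morphisms_in_G_construction}). On first components the square becomes $B_{c',d'}\circ(f\otimes f') = (f'\otimes f)\circ B_{c,d}$, which is exactly naturality of the braiding $B$ in $\C$. On second components, one leg produces $F(B_{c',d'})\bigl(\Phi_{c',d'}(g,g')\bigr)$; applying the braiding-compatibility square for $\Phi$ displayed just before the theorem rewrites this as $\Phi_{d',c'}(g',g)$, which is precisely the second component of the other leg (there the twist $F(f'\otimes f)$ acts only on an identity and so contributes nothing). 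Thus both components agree.

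For the hexagons I would isolate the following bookkeeping fact, already hinted at in the remark after Thm.~\ref{thm:G_construction_for_mon_cats}: every structure morphism of $\Int F$---the associator $\alpha^F$, the unitors $\lambda^F,\rho^F$, and the braiding $B^F$---has an identity second component, and this property is closed under $\otimes_F$ and under composition. Closure under $\otimes_F$ holds because $\Phi$ is a functor and hence sends a pair of identities to an identity (Eq.~\ref{eq:tensoring_morphisms_in_G_construction}); closure under composition holds because in~(\ref{eq:composition}) the twist $F(f)(-)$ carries an identity to an identity. Granting this, each hexagon diagram for $\Int F$ has identity second components throughout, so it commutes as soon as its first-component diagram does---and the first-component diagram is exactly the corresponding hexagon for $B$ in $\C$, which holds because $\C$ is braided monoidal.

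The hard part will not be any single computation but rather making the second-component bookkeeping airtight: one must track that the objects carrying the various identities genuinely match up after each composite, using functoriality of $\Phi$ and of each $F(h)$ together with the braiding-compatibility square, rather than merely asserting that \emph{the second component is trivial}. Once that lemma is stated and proved, both hexagons and naturality reduce mechanically to the already-established braided structure of $\C$.
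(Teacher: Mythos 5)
Your proposal is correct and follows essentially the same route as the paper: the naturality square is verified componentwise using naturality of $B$ in $\C$ together with the braiding-compatibility square for $\Phi$, and the hexagons are reduced to the corresponding hexagons in $\C$ because all structure morphisms of $\Int F$ have identity second components. Your explicit closure lemma (identity second components are preserved by $\otimes_F$ and by composition, via functoriality of $\Phi$ and of $F(f)$) just makes precise the bookkeeping that the paper handles implicitly in its remark following Thm.~\ref{thm:G_construction_for_mon_cats}.
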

\begin{proof}
    We need to show that $B^F$ is a natural transformation and that it obeys the hexagon identities. Let $(f,g) \maps (c,x) \to (d,y)$ and $(f',g') \maps (c',x') \to (d',y')$ be morphisms in $C$. Since $\C$ is braided monoidal with the braiding $B$, the following diagram commutes:
    \[\begin{tikzcd}
        c \otimes c'
        \arrow[r,"B_{c,c'}"]
        \arrow[d,"f \otimes f'",swap]
        &
        c' \otimes c
        \arrow[d,"f' \otimes f"]
        \\
        d \otimes d'
        \arrow[r,"B_{d,d'}",swap]
        &
        d' \otimes d
    \end{tikzcd}\]
    giving the equation $(f'\otimes f) \circ B_{c,c'} = B_{d,d'} \circ (f \otimes f')$. Since $F$ is lax braided monoidal, the following diagram commutes:
    \[\begin{tikzcd}
        Fd \times Fd'
        \arrow[r,"B_{Fd,Fd'}"]
        \arrow[d,"\Phi_{d,d'}",swap]
        &
        Fd' \times Fd
        \arrow[d,"\Phi_{d',d}"]
        \\
        F(d \otimes d')
        \arrow[r,"FB_{d,d'}",swap]
        &
        F(d' \otimes d')
    \end{tikzcd}\]
    giving the equation $FB_{d,d'} (\Phi_{d,d'}) = \Phi_{d',d}(B_{Fd,Fd'})$.
    Thus
    \begin{align*}
        B^F_{(d,y),(d',y')} \circ ((f,g) \otimes_F (f',g'))
        &= (B_{d,d'}, 1) \circ (f \otimes f', \Phi_{d,d'}(g,g'))
        \\&= (B_{d,d'} \circ f\otimes f', FB_{d,d'}(\Phi_{d,d'}(g,g')))
        \\&= (f'\otimes f \circ B_{c,c'}, \Phi_{d',d}(B_{Fd,Fd'}(g,g') )
        \\&= (f' \otimes f \circ B_{c,c'}, \Phi_{d',d}(g',g))
        \\&= (f' \otimes f, \Phi_{d',d}(g',g)) \circ (B_{c,c'}, 1)
        \\&= ((f',g') \otimes_F (f,g)) \circ B^F_{(c,x),(c',x')}.
    \end{align*}
    This tells us that the following diagram commutes, and thus $B^F$ is natural. 
    \[\begin{tikzcd}
        (c,x) \otimes_F (c',x') 
        \arrow[r,"B^F_{(c,x),(c',x')}", outer sep = 3pt]
        \arrow[d,"{(f,g) \otimes_F (f',g')}",swap]
        &
        (c',x') \otimes_F (c,x)
        \arrow[d,"{(f',g') \otimes_F (f,g)}"]
        \\
        (d,y) \otimes_F (d',y') 
        \arrow[r,"B^F_{(d,y),(d',y')}",swap, outer sep = 2pt]
        &
        (d',y') \otimes_F (d,y)
    \end{tikzcd}\]
    
    Next we show that the necessary diagrams commute to make $B^F$ a braiding for $\Int F$. 
    Notice that the diagrams
    \[\begin{tikzcd}
        c \otimes ( d \otimes e )
        \arrow[r, "\alpha\inv_{c,d,e}"]
        \arrow[d, "B_{c, d \otimes e}",swap]
        &
        (c \otimes d) \otimes e
        \arrow[d, "B_{c,d} \otimes e"]
        \\
        (d \otimes e ) \otimes c
        &
        (d \otimes c) \otimes e
        \arrow[d, "\alpha_{d, c, e}"]
        \\
        d \otimes (e  \otimes c )
        \arrow[u, "\alpha\inv_{d,e,c}"]
        &
        d \otimes (c \otimes e)
        \arrow[l, "d \otimes B_{c, e}"]
    \end{tikzcd}\]
    \[\begin{tikzcd}
        (c \otimes d) \otimes e
        \arrow[r, "\alpha_{c,d,e}"]
        \arrow[d, "B_{c \otimes d, e}",swap]
        &
        c \otimes (d \otimes e)
        \arrow[d, "c \otimes B_{d,e}"]
        \\
        e \otimes (c \otimes d)
        &
        c \otimes (e \otimes d)
        \arrow[d, "\alpha\inv_{c,e,d}"]
        \\
        (e \otimes c) \otimes d
        \arrow[u, "\alpha_{e,c,d}"]
        &
        (c \otimes e) \otimes d
        \arrow[l, "B_{c,e} \otimes d"]
    \end{tikzcd}\]
    commute.
    Then the corresponding diagrams for $\alpha^F$ and $B^F$ commute.
    \[\begin{tikzcd}
        (c,x) \otimes_F ( (d,y) \otimes_F (e,z) )
        \arrow[r, "(\alpha^F)\inv_{(c,x),(d,y),(e,z)}", outer sep = 3pt]
        \arrow[d, "B^F_{(c,x), (d,y) \otimes_F (e,z)}",swap]
        &
        ((c,x) \otimes_F (d,y)) \otimes_F (e,z)
        \arrow[d, "B^F_{(c,x),(d,y)} \otimes {(e,z)}"]
        \\
        ((d,y) \otimes_F (e,z) ) \otimes_F (c,x)
        &
        ((d,y) \otimes_F (c,x)) \otimes_F (e,z)
        \arrow[d, "\alpha^F_{(d,y), (c,x), (e,z)}"]
        \\
        (d,y) \otimes_F ((e,z)  \otimes_F (c,x) )
        \arrow[u, "(\alpha^F)\inv_{(d,y),(e,z),(c,x)}"]
        &
        (d,y) \otimes_F ((c,x) \otimes_F (e,z))
        \arrow[l, "{(d,y)} \otimes_F B^F_{(c,x), (e,z)}", outer sep = 2pt]
    \end{tikzcd}\]
    \[\begin{tikzcd}
        ((c,x) \otimes_F (d,y)) \otimes_F (e,z)
        \arrow[r, "\alpha^F_{(c,x),(d,y),(e,z)}", outer sep = 3pt]
        \arrow[d, "B^F_{(c,x) \otimes_F (d,y), (e,z)}",swap]
        &
        (c,x) \otimes_F ((d,y) \otimes_F (e,z))
        \arrow[d, "{(c,x)} \otimes_F B^F_{(d,y),(e,z)}"]
        \\
        (e,z) \otimes_F ((c,x) \otimes_F (d,y))
        &
        (c,x) \otimes_F ((e,z) \otimes_F (d,y))
        \arrow[d, "(\alpha^F)\inv_{(c,x),(e,z),(d,y)}"]
        \\
        ((e,z) \otimes_F (c,x)) \otimes_F (d,y)
        \arrow[u, "\alpha^F_{(e,z),(c,x),(d,y)}"]
        &
        ((c,x) \otimes_F (e,z)) \otimes_F (d,y)
        \arrow[l, "B^F_{(c,x),(e,z)} \otimes_F {(d,y)}", outer sep = 4pt]
    \end{tikzcd}\]
\end{proof}

\begin{thm}
\label{thm:G_construction_functorial_for_braided_cats}
    There exists a unique functor, the \define{braided Grothendieck \hfill \break construction} 
    \[\textstyle{\Int} \maps \BMICat \to \BMC, \] 
    that sends any object $F$ to the braided monoidal category $\Int F$ given in Thm.\ \ref{thm:G_construction_for_braided_cats} and sends any morphism $G \maps F \to F'$ to the lax braided monoidal functor $\Ghat \maps \Int F \to \Int F'$ defined above.
\end{thm}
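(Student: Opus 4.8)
The plan is to lean almost entirely on the two facts already available. By Theorem \ref{thm:G_construction_functorial_for_mon_cats} the assignment $F \mapsto \Int F$, $G \mapsto \Ghat$ is a functor $\MICat \to \MC$, and by Theorem \ref{thm:G_construction_for_braided_cats} each $\Int F$ arising from a lax braided monoidal $F$ already carries the braiding $B^F_{(c,x),(d,y)} = (B_{c,d},1)$. Since being braided monoidal is a \emph{property} of a monoidal category, and being lax braided monoidal is a \emph{property} of a lax monoidal functor, the only thing left to prove is that $\Ghat$—which we already know to be a lax monoidal functor—respects the braidings whenever $G \maps F \to F'$ is a morphism in $\BMICat$. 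Once that is done, preservation of composition and identities, and uniqueness, are inherited word for word from Theorem \ref{thm:G_construction_functorial_for_mon_cats}, because the underlying data $(\Ghat,\Gahat,\gahat)$ is literally the same; nothing new needs to be checked there.

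So the single computation to carry out is the braided-functor coherence square for $\Ghat$. Writing $B'$ for the braiding of $\C'$, I must show
\[
    \Gahat_{(d,y),(c,x)} \circ B^{F'}_{\Ghat(c,x),\Ghat(d,y)} = \Ghat\bigl(B^F_{(c,x),(d,y)}\bigr) \circ \Gahat_{(c,x),(d,y)}
\]
as morphisms $\Ghat(c,x) \otimes_{F'} \Ghat(d,y) \to \Ghat((d,y) \otimes_F (c,x))$ in $\Int F'$. I would evaluate each side using the formulas already in place. On the left, $B^{F'}_{\Ghat(c,x),\Ghat(d,y)} = (B'_{Gc,Gd},1)$ and, by Eq.\ \ref{eq:lax_tensor}, $\Gahat_{(d,y),(c,x)} = (\Gamma_{d,c},1)$, so the composite is $(\Gamma_{d,c} \circ B'_{Gc,Gd},1)$. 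On the right, $B^F_{(c,x),(d,y)} = (B_{c,d},1)$; since $\Ghat$ sends a morphism $(f,g) \maps (c,x)\to(d,y)$ to $(Gf,\gn_d g)$ and the functor $\gn_{d\otimes c}$ preserves identity morphisms, applying $\Ghat$ to this braiding gives $(GB_{c,d},1)$, and composing with $\Gahat_{(c,x),(d,y)} = (\Gamma_{c,d},1)$ yields $(GB_{c,d} \circ \Gamma_{c,d},1)$.

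Every second component in sight is an identity, so the displayed equation collapses to its first component, namely $\Gamma_{d,c} \circ B'_{Gc,Gd} = GB_{c,d} \circ \Gamma_{c,d}$ in $\C'$—and this is exactly the coherence axiom asserting that $(G,\Gamma,\gamma)$ is a lax \emph{braided} monoidal functor, which holds by hypothesis. The only point that deserves care is verifying that $\Ghat$ applied to the morphism $(B_{c,d},1)$ really has identity second component, which uses that the relevant component $\gn_{d\otimes c}$ is a functor and hence preserves identities. Beyond that I anticipate no genuine obstacle: the braided case is strictly lighter than the monoidal case already proved, since the braidings and the lax structure maps all have trivial second components, so the entire verification reduces to a single equation living in $\C'$.
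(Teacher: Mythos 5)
Your proposal is correct and follows essentially the same route as the paper: both reduce the problem, via Theorem \ref{thm:G_construction_functorial_for_mon_cats}, to checking that $\Ghat$ is braided, and both establish this by showing that the braided-functor coherence square for $\Ghat$ collapses (since all second components are identities, as $\gn_{d\otimes c}$ and $F'$ preserve identities) to the coherence square $\Gamma_{d,c} \circ B'_{Gc,Gd} = GB_{c,d} \circ \Gamma_{c,d}$ expressing that $G$ is lax braided. The only quibble is your phrase that ``being braided monoidal is a \emph{property} of a monoidal category''---a braiding is extra structure, not a property (it is braidedness of a lax monoidal \emph{functor} that is a property)---but this does not affect your argument, since the braiding on $\Int F$ is supplied by Theorem \ref{thm:G_construction_for_braided_cats} exactly as you use it.
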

\begin{proof}
     Uniqueness follows because the theorem specifies $\int$ on objects and morphisms.   From Thm.\ \ref{thm:G_construction_functorial_for_mon_cats}   we already know that $\Ghat$ is lax monoidal for any morphism in $\MICat$ and that $\int$ preserves composition and identities. Thus, for existence all we need to show is that $\Ghat$ is in fact braided.
    
    Since $G$ is braided monoidal, the following diagram commutes:
    \[\begin{tikzcd}
        Gc \otimes' Gd
        \arrow[r,"B'_{Gc,Gd}"]
        \arrow[d,"\Gamma_{c,d}",swap]
        &
        Gd \otimes' Gc
        \arrow[d,"\Gamma_{d,c}"]
        \\
        G(c \otimes d)
        \arrow[r,"GB_{c,d}",swap]
        &
        G(d \otimes c)
    \end{tikzcd}\]
    Thus, the corresponding diagram commutes:
    \[\begin{tikzcd}
        \Ghat (c,x) \otimes_{F'} \Ghat (d,y)
        \arrow[r,"B^{F'}_{\Ghat (c,x),\Ghat (d,y)}", outer sep = 6pt]
        \arrow[d,"\Gahat_{(c,x),(d,y)}",swap]
        &
        \Ghat (d,y) \otimes_{F'} \Ghat (c,x)
        \arrow[d,"\Gahat_{(d,y),(c,x)}"]
        \\
        \Ghat ((c,x) \otimes_F (d,y))
        \arrow[r,"\Ghat B^F_{(c,x),(d,y)}",swap, outer sep = 3pt]
        &
        \Ghat ((d,y) \otimes_F (c,x))
    \end{tikzcd}\]
    This shows that $\Ghat$ is braided.
\end{proof}

\subsection{The symmetric Grothendieck construction}

Finally we turn to the symmetric monoidal case. 

\begin{defn}
\label{defn:SMC}
Let $\SMC$ be the category with small symmetric monoidal categories as objects and lax symmetric monoidal functors as morphisms. 
\end{defn}

\noindent
We defined the category $\SMICat$ in Def.\ \ref{defn:SMICat}. So, we are ready to state our main result:

\begin{thm}
\label{thm:G_construction_functorial_for_symmetric_cats}
    There exists a unique functor, the \define{symmetric Grothendieck construction}
    \[
        \Int \maps \SMICat \; \to \; \SMC 
    \] 
    that acts on objects and morphisms as in Thm.\
    \ref{thm:G_construction_functorial_for_braided_cats}.
\end{thm}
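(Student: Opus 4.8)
The plan is to obtain this result by restricting the braided Grothendieck construction of Thm.~\ref{thm:G_construction_functorial_for_braided_cats} rather than redoing any of the work. A symmetric monoidal category is precisely a braided monoidal category whose braiding satisfies the extra axiom $B_{d,c} \circ B_{c,d} = 1_{c \otimes d}$, and a lax symmetric monoidal functor (resp.\ symmetric monoidal natural transformation) between symmetric monoidal data is exactly a lax braided monoidal functor (resp.\ braided monoidal natural transformation), since the coherence condition relating the functor to the braiding is the same in both settings. Thus $\SMICat$ is the full subcategory of $\BMICat$ on those objects $(\C,F)$ with $\C$ symmetric monoidal, and $\SMC$ is the full subcategory of $\BMC$ on the symmetric monoidal categories. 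The functor $\Int \maps \BMICat \to \BMC$ therefore restricts to a functor $\SMICat \to \BMC$, and the only thing left to check is that this restriction actually lands in $\SMC$.

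The single piece of genuinely new content is that $\Int F$ is \emph{symmetric} whenever $F$ is lax symmetric monoidal. By Thm.~\ref{thm:G_construction_for_braided_cats} we already know $\Int F$ is braided monoidal with braiding $B^F_{(c,x),(d,y)} = (B_{c,d}, 1)$, so it suffices to verify that $B^F$ squares to the identity. Using the composition formula Eq.~\ref{eq:composition}, functoriality of $F$ (which sends identities to identities), and the symmetry axiom $B_{d,c} \circ B_{c,d} = 1_{c\otimes d}$ in $\C$, I would compute
\[ B^F_{(d,y),(c,x)} \circ B^F_{(c,x),(d,y)} = (B_{d,c},1) \circ (B_{c,d},1) = (B_{d,c} \circ B_{c,d},\, 1) = (1_{c\otimes d},\, 1), \]
which is the identity on $(c,x) \otimes_F (d,y)$. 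Hence $B^F$ is a symmetry and $\Int F$ is a symmetric monoidal category.

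For morphisms, there is nothing further to prove: given a morphism $G$ in $\SMICat$, Thm.~\ref{thm:G_construction_functorial_for_braided_cats} already produces a lax braided monoidal functor $\Ghat \maps \Int F \to \Int F'$, and since both $\Int F$ and $\Int F'$ are now known to be symmetric, $\Ghat$ is automatically lax symmetric monoidal. Functoriality (preservation of composition and identities) and uniqueness are inherited verbatim from the braided case, because the restricted functor acts on objects and morphisms by exactly the same formulas. The main ``obstacle'' here is conceptual rather than computational: the work lies in correctly identifying $\SMICat$ and $\SMC$ as full subcategories so that functoriality, lax-symmetric-monoidality of $\Ghat$, and uniqueness all come for free, leaving only the one-line verification above that the braiding $B^F$ is an involution.
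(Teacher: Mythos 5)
Your proof is correct and takes essentially the same route as the paper: the paper also obtains the result by restricting the braided Grothendieck construction of Thm.~\ref{thm:G_construction_functorial_for_braided_cats} and checking that $\Int F$ is symmetric whenever $\C$ is, a step it calls ``straightforward from the formula for the braiding.'' Your explicit computation that $B^F_{(d,y),(c,x)} \circ B^F_{(c,x),(d,y)} = (1_{c\otimes d}, 1)$ merely fills in that one-line verification, using exactly the composition formula and functoriality of $F$ as the paper intends.
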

\begin{proof}
    In Thm.\ 
    \ref{thm:G_construction_functorial_for_braided_cats} we obtained a functor $\Int \maps \BMICat \to \BMC$, so for both existence and uniqueness we need only check that if an object $F \maps \C \to \Cat$ of $\BMICat$ has $C$ symmetric then $\Int F$ is symmetric as well. This is straightforward from the formula for the braiding in $\Int F$. 
\end{proof}

We conclude with a strict version of the above result, which we use in the next section.

\begin{defn}
\label{defn:SSMC}
    Let $\SSMC$ be the category with small strict symmetric monoidal categories as objects and strict symmetric monoidal functors as morphisms. 
\end{defn}

\begin{defn} 
\label{defn:SSMICat}
Let $\SSMICat$ be the category where:
\begin{itemize}
    \item objects are pairs $(\C,G)$ where $\C$ is a small strict symmetric monoidal category and $G \maps \C \to \Cat$ is a lax symmetric monoidal functor.
    \item morphisms from $(\C_1, G_1)$ to $(\C_2,G_2)$ are pairs $(G,\gn)$ where $G \maps \C_1 \to \C_2$ is a strict symmetric monoidal functor and $\gn \maps F \To F' \circ G $ is a symmetric monoidal natural transformation:
    \[\begin{tikzcd}
    \C
    \arrow[dr, "F"]
    \arrow[dr, ""{name=F}, swap]
    \arrow[dd, "G", swap]
    \\&
    \Cat
    \\
    \C'
    \arrow[ur, "F'", swap]
    \arrow[ur, ""{name=F'}, pos=0.43]
    \arrow[Rightarrow, from = F, to = F', "\gn", swap]
\end{tikzcd}\]
\end{itemize}
\end{defn}

\begin{thm}
\label{thm:G_construction_functorial_for_strict_symmetric_cats}
 There is a unique functor, the \define{strict
 symmetric Grothendieck construction}
\[\textstyle{\Int} \maps \SSMICat \; \to \; \SSMC ,\] 
that acts on objects and morphisms as in Thm.\
\ref{thm:G_construction_functorial_for_symmetric_cats}.
\end{thm}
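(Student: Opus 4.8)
The plan is to obtain this strict functor by restricting the symmetric Grothendieck construction $\Int \maps \SMICat \to \SMC$ of Thm.\ \ref{thm:G_construction_functorial_for_symmetric_cats} along the evident forgetful embeddings $\SSMICat \hookrightarrow \SMICat$ and $\SSMC \hookrightarrow \SMC$. First I would observe that these really are subcategory inclusions: every object $(\C, F)$ of $\SSMICat$ is in particular an object of $\SMICat$, since a strict symmetric monoidal category is a symmetric monoidal category and $F \maps \C \to \Cat$ is lax symmetric monoidal either way; likewise every morphism $(G,\gn)$ of $\SSMICat$, having $G$ strict symmetric monoidal, is a morphism of $\SMICat$. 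The same applies to $\SSMC \hookrightarrow \SMC$. So the only content is to check that $\Int$ carries $\SSMICat$ into $\SSMC$, after which functoriality and uniqueness are inherited for free.

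For the object half, given $(\C,F) \in \SSMICat$ I would note that $\Int F$ is already symmetric monoidal by Thm.\ \ref{thm:G_construction_functorial_for_symmetric_cats}, so it suffices to upgrade this to strictness of the underlying monoidal structure. This is exactly Prop.\ \ref{prop:G_construction_for_strict_mon_cats}: the associator and unitors of $\Int F$ defined in Eqs.\ \ref{eq:associator}, \ref{eq:left_unitor}, \ref{eq:right_unitor} have first components $\alpha_{c,d,e}$, $\lambda_c$, $\rho_c$ and trivial second components, so when $\C$ is strict these all become identities. Hence $\Int F$ is a strict symmetric monoidal category, i.e.\ an object of $\SSMC$.

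For the morphism half, given a morphism $(G,\Gamma,\gamma,\gn)$ of $\SSMICat$ with $G$ strict symmetric monoidal, I would recall from Thm.\ \ref{thm:G_construction_functorial_for_braided_cats} that $\Ghat$ is already lax braided---hence lax symmetric---monoidal, so again it remains only to check strictness of the monoidal part. That is Prop.\ \ref{prop:G_construction_for_strict_mon_functors}: the \laxstructuremap\ $\Gahat$ and the unit comparison $\gahat$ of Eqs.\ \ref{eq:lax_tensor}, \ref{eq:lax_unit} have first components $\Gamma_{c,d}$, $\gamma$ and trivial second components, so when $G$ is strict (i.e.\ $\Gamma$ and $\gamma$ are identities) both $\Gahat$ and $\gahat$ are identities and $\Ghat$ is a strict symmetric monoidal functor, i.e.\ a morphism of $\SSMC$. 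Since $\SSMICat$ is a subcategory of $\SMICat$ on which $\Int$ agrees with the symmetric Grothendieck construction, preservation of composition and identities---together with uniqueness of a functor acting on objects and morphisms in the prescribed way---is then immediate from Thm.\ \ref{thm:G_construction_functorial_for_symmetric_cats}.

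I expect no serious obstacle: the whole argument is a restriction of an already-established functor, and the two strictness Propositions \ref{prop:G_construction_for_strict_mon_cats} and \ref{prop:G_construction_for_strict_mon_functors} do precisely the one piece of work that is not purely formal. The closest thing to a subtlety, which I would flag explicitly, is to not conflate ``strict symmetric monoidal'' with ``trivial braiding''; the braiding $B^F$ need not be an identity, and strictness is imposed only on associativity and units. The explicit formulas for $\alpha^F, \lambda^F, \rho^F, \Gahat, \gahat$ make the preservation of exactly that structure transparent.
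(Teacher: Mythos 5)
Your proposal is correct and takes essentially the same approach as the paper: the paper's proof also reduces the theorem to the two strictness claims---that $\Int F$ is strict when $\C$ is strict, and that $\Ghat$ is strict when $G$ is strict---and cites Props.\ \ref{prop:G_construction_for_strict_mon_cats} and \ref{prop:G_construction_for_strict_mon_functors} for exactly these, with functoriality and uniqueness inherited from Thm.\ \ref{thm:G_construction_functorial_for_symmetric_cats}.
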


\begin{proof}
To prove this it suffices to check these claims:
\begin{enumerate}
    \item If $F \maps \C \to \Cat$ is an object of $\SMICat$ with $\C$ a \emph{strict} symmetric monoidal category, then $\Int F$ is a strict symmetric monoidal category.
    \item If
     \[\begin{tikzcd}
    \C
    \arrow[dr, "F"]
    \arrow[dr, ""{name=F}, swap]
    \arrow[dd, "G", swap]
    \\&
    \Cat
    \\
    \C'
    \arrow[ur, "F'", swap]
    \arrow[ur, ""{name=F'}, pos=0.43]
    \arrow[Rightarrow, from = F, to = F', "\gn", swap]
\end{tikzcd}\]
    is a morphism in $\SMICat$ with $G$ a \emph{strict} symmetric monoidal functor, then $\Ghat \maps \Int F \to \Int F$ is a strict symmetric monoidal functor. 
\end{enumerate}
The first of these follows from Prop.\ \ref{prop:G_construction_for_strict_mon_cats}, while the second follows from Prop.\ \ref{prop:G_construction_for_strict_mon_functors}. \end{proof}

\subsection{A 2-categorical perspective}
\label{subsec:mon_fib}

We have explained how to use the Grothendieck construction to build a symmetric monoidal category $\Int F$ from a lax symmetric monoidal functor $F \maps \C \to \Cat$. In what follows, all we really need are the explicit formulas for how this works. Still, it seems worthwhile to set this result in a larger context. This requires a 2-categorical perspective on fibrations and indexed categories. A full review of the prerequisites would be rather lengthy, so we only recall a few facts. The main ideas go back to Grothendieck \cite{Grothendieck}, but they have been developed and explained by many subsequent authors \cite{Borceux,Hermida, Jacobs,Johnstone,Vasilakopoulou}, whose works can be consulted for details. To conform to this literature we now replace $\C$ with $\C^{\op}$, which lets us work with fibrations rather than opfibrations. This makes no real difference for the categories $\C$ we are mainly interested in, which are groupoids.

A functor $F \maps \C^{\op} \to \Cat$ is sometimes called a `split indexed category'. The Grothendieck construction builds from this a category $\Int F$. The objects and morphism of $\Int F$ are pairs whose first component belongs to $\C$. Projecting onto this first component gives a functor
\[p \maps \Int F \to \C  .\]
This functor is equipped with some extra structure that makes it into a `split fibration'. The idea is that an object of $\Int F$ is an object of $\C$ equipped with extra structure, and the splitting gives a well-behaved way to take this extra structure and pull it back along any morphism in $\C$. For example, given a bijection of finite sets, and a simple graph on the domain of this bijection, we can pull it back to the codomain. 

Conversely, from a split fibration we can recover a split indexed category. Indeed, split fibrations are essentially `the same' as split indexed categories. To express this fact clearly, we need a couple of 2-categories that are nicely explained by Jacobs \cite[Sec.\ 1.7]{Jacobs}. First, we need the 2-category $\ICat$ of split indexed categories, where an object is a functor $F \maps \C^{\op} \to \Cat$ for an arbitrary small category $\C$, a morphism is a pseudonatural transformation, and a 2-morphism is a modification. Second, we need the 2-category $\Fib_{\spl}$ where an object is a split fibration, a  morphism is a morphism of fibrations that preserves the splitting, and a 2-morphism is a fibered natural transformation. The map sending any functor $F \maps \C \to \Cat$ to the split fibration $p \maps \Int F \to \C$ then extends to an equivalence of 2-categories 
\[G \maps \ICat \stackrel{\sim}{\longrightarrow} \Fib_{\spl} .\]
There is also a 2-functor 
\[\mathrm{dom} \maps \Fib_{\spl} \longrightarrow \Cat \] that sends a split fibration $F \maps \D \to \C$ to its domain category $\D$, and the composite 
\[\Int = \mathrm{dom} \circ G \maps \ICat \longrightarrow \Cat \]
is none other than the Grothendieck construction, now regarded as a 2-functor rather than a mere functor. For details see Jacobs \cite[Thm.\ 1.10.7]{Jacobs}.

A `pseudomonoid' is a generalization of a monoidal category which makes sense in any 2-category with finite products (or even more generally). We can also define braided and symmetric pseudomonoids, which generalize braided and symmetric monoidal categories \cite{DS}. For any 2-category $\twocat$ with finite products, let:
\begin{itemize}
    \item $\m\twocat$ be the 2-category of pseudomonoids in $\twocat$, monoidal morphisms, and monoidal transformations, 
    \item $\bm\twocat$ be the 2-category of braided pseudomonoids in $\twocat$, braided monoidal morphisms, and braided monoidal transformations, 
    \item $\sm\twocat$ be the 2-category of symmetric pseudomonoids in $\twocat$, symmetric monoidal morphisms, and symmetric monoidal transformations.
\end{itemize}
All these concepts are defined by McCrudden \cite{McCrudden}.  The notation here is compatible with that introduced earlier in this section, except that now we are working 2-categorically. That is, $\MC$, $\BMC$ and $\SMC$ are 2-categories whose underlying categories were already defined in Defs.\ \ref{defn:MC}, \ref{defn:BMC} and \ref{defn:SMC}. More interesting is that the 2-category $\Fib_{\spl}$ has finite products, so the equivalent 2-category $\ICat$ does as well, and one can prove that the 2-categories $\m\ICat, \bm\ICat$ and $\sm\ICat$ are 2-categories whose underlying categories match those defined in Defs.\ \ref{defn:MICat}, \ref{defn:BMICat} and \ref{defn:SMICat}, at least after replacing $\C$ with $\C^{\op}$.  We leave the verification of this as an exercise for the reader.

The equivalence $G \maps \ICat \to \Fib_{\spl}$ preserves products, and so does the 2-functor $\mathrm{dom} \maps \Fib_{\spl} \to \Cat$. Thus, so does the composite 2-functor
$\Int = \mathrm{dom} \circ G$. It thus induces 2-functors
\[
\begin{array}{lcc}
    \int \maps \m\ICat &\to& \MC\\ \\
    \int \maps \bm\ICat &\to& \BMC\\ \\
    \int \maps \sm\ICat &\to & \SMC.
\end{array}
\]
These 2-functors match those given in Thms.\ \ref{thm:G_construction_functorial_for_mon_cats}, \ref{thm:G_construction_functorial_for_braided_cats}, and \ref{thm:G_construction_functorial_for_symmetric_cats}, at least after replacing $\C$ with $\C^{\op}$.

The reader conversant with fibrations may wonder why we are restricting attention to \emph{split} indexed categories and \emph{split} fibrations. Only the split case seems relevant to network models. However, everything we have just done also works for general indexed categories and fibrations.  This is shown by Vasilakopolou and the second author in a paper that develops the 2-categorical approach sketched here \cite{MV}.

\section{Operads from network models}
\label{sec:operads}

Next we describe the operad associated to a network model. There is a standard method of constructing an untyped operad from an object $x$ in a strict symmetric monoidal category $\C$. Namely, we define the set of $n$-ary operations to be $\hom_{\C}(x^{\otimes n},x)$, and compose these operations using composition in $\C$. This gives the so-called {\bf endomorphism operad} of $x$. Here we use the generalization of this idea to the typed case, using \emph{all} the objects of $\C$ as the types of the operad. 

We assume familiarity with typed operads: these are often called `colored' operads, with the types called `colors' \cite{Yau}.  In what follows we let $\Ob(\C)$ be the set of objects of a small category $\C$.

\begin{prop}
\label{prop:operad_from_symmoncat}
    If $\C$ is a small strict symmetric monoidal category then there is an $\Ob(\C)$-typed operad $\op(\C)$ for which:
    \begin{itemize}
        \item the set of operations $\op(\C)(c_1, \dots, c_k; c)$ is defined to be $\hom_\C(c_1 \otimes \dots \otimes c_k, c)$,
        \item given operations   
        \[f\in \hom_\C(c_1 \otimes \cdots \otimes c_k; c) \]
        and 
        \[g_i \in \hom_\C(c_{ij_1} \otimes \cdots \otimes c_{ij_i},c_i) 
        \]
        for $1 \le i \le k$, their composite is defined by
        \begin{equation}
        \label{eq:composition_of_operations}
        f \circ (g_1, \dots, g_k) = f \circ (g_1 \otimes \cdots \otimes g_k) .
        \end{equation}
        \item identity operations are identity morphisms in $\C$, and
        \item the action of $S_k$ on $k$-ary operations is defined using the braiding in $\C$.
    \end{itemize}
\end{prop}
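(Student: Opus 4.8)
The plan is to verify directly that the data described satisfy the axioms of a typed operad, as laid out in Yau \cite{Yau}. The first task is well-definedness of the composite in Eq.\ \ref{eq:composition_of_operations}: given $f \in \hom_\C(c_1 \otimes \cdots \otimes c_k, c)$ and $g_i \in \hom_\C(c_{ij_1} \otimes \cdots \otimes c_{ij_i}, c_i)$, the morphism $g_1 \otimes \cdots \otimes g_k$ runs from the tensor product of the sources of the $g_i$ to $c_1 \otimes \cdots \otimes c_k$. Here the \emph{strictness} of $\C$ is essential: it guarantees that this source is literally the flat tensor product $c_{1j_1} \otimes \cdots \otimes c_{kj_i}$, with no intervening associators, so that composing with $f$ yields an operation of exactly the required type.

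Next I would establish associativity and unitality. Associativity of operadic composition reduces to associativity of composition in $\C$ together with the bifunctoriality (interchange law) of $\otimes$, which lets one reassociate nested tensor products of morphisms; strictness again ensures the bracketings of the object-level tensor products agree on the nose. For the unit axioms, the identity operation on $c$ is $1_c$, and the two required identities follow from $1_{c_1} \otimes \cdots \otimes 1_{c_k} = 1_{c_1 \otimes \cdots \otimes c_k}$, i.e.\ functoriality of $\otimes$ on identities, together with the left and right unit laws for composition in $\C$.

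The substantive step is equivariance. For $\sigma \in S_k$ I would define the action on $f \in \hom_\C(c_1 \otimes \cdots \otimes c_k, c)$ by precomposition with the canonical braiding isomorphism $\beta_\sigma \maps c_{\sigma(1)} \otimes \cdots \otimes c_{\sigma(k)} \to c_1 \otimes \cdots \otimes c_k$ assembled from the symmetry of $\C$. Two things must be checked: that the $\beta_\sigma$ compose appropriately so that precomposition defines a genuine action of $S_k$, and that operadic composition is equivariant in both the outer variable (permuting the $g_i$) and the inner variables (permuting the arguments inside each $g_i$). I expect this to be the main obstacle, and the tool that settles it is Mac Lane's coherence theorem for symmetric monoidal categories: any two morphisms assembled from braidings, associators, and unitors between the same pair of objects that induce the same underlying permutation of tensor factors must coincide. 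In the strict setting associators and unitors are identities, so a braiding morphism is determined entirely by the permutation it realizes; this immediately gives both the action property and the matching of the two sides of each equivariance axiom. The interaction of the $\beta_\sigma$ with the $g_i$ under composition is handled by naturality of the braiding, after which the equivariance equations become formal consequences of coherence. The remaining verifications are routine bookkeeping with tensor products of morphisms.
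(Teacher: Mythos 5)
Your proposal is correct, but it is genuinely more detailed than the paper's own argument: the paper's entire proof is a single sentence declaring the proposition ``an easy generalization of the untyped case'' of the endomorphism operad construction, with a citation to \cite[Def.\ 1.7]{MSS} and all verification left implicit. What you have written out is precisely the content that this ``easy generalization'' suppresses, and your outline identifies correctly where each hypothesis enters: strictness makes the source of $g_1 \otimes \cdots \otimes g_k$ literally the flat tensor product $c_{11} \otimes \cdots \otimes c_{kj_k}$, so that composing with $f$ is well-typed on the nose; associativity and unitality of operadic composition come from functoriality of $\otimes$ (the interchange law and preservation of identities) together with associativity and unitality of composition in $\C$; and the $S_k$-action with its two equivariance axioms comes from naturality of the symmetry plus the coherence theorem for symmetric monoidal categories, which in the strict setting says a canonical morphism is determined by the permutation it realizes. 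Your phrasing here is appropriately careful: the permutation must be tracked formally (as in the free symmetric monoidal category), not merely read off from source and target, since distinct tensor factors in $\C$ may be accidentally equal---e.g.\ the symmetry on $c \otimes c$ is canonical but is not the identity. The trade-off between the two proofs: the paper buys brevity and an anchor in the literature, while your version buys self-containedness and makes visible the two facts a reader cannot reconstruct from the citation alone---that strictness is what makes the composite well-defined without inserting associators, and that symmetry (not mere braidedness) is what makes the $S_k$-action a genuine group action satisfying the equivariance axioms.
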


\begin{proof}
    The various axioms of a colored operad can be checked for $\op(\C)$ using the corresponding laws in the definition of a strict symmetric monoidal category.  The associativity axiom for $\op(\C)$ follows from associativity of composition and the functoriality of the tensor product in $\C$.  The left and right unit axioms for $\op(\C)$ follow from the unit laws for composition and the functoriality of the tensor product in $\C$.  The two equivariance axioms for $\op(\C)$ follow from the laws governing the braiding in $\C$. 
\end{proof}

Given a network model $F \maps \S(C) \to \Cat$, we can use the strict symmetric Grothendieck construction of Thm.\ \ref{thm:G_construction_functorial_for_strict_symmetric_cats} to define a strict symmetric monoidal category $\Int F$. We can then use Prop.\ \ref{prop:operad_from_symmoncat} to build an operad $\op(\Int F)$.

\begin{defn}
\label{defn:CN}
    Given a network model $F \maps \S(C) \to \Cat$, define the \define{network operad} $\CN_F$ to be $\op(\Int F)$. 
\end{defn}

If $F \maps \S(C) \to \Mon$, the objects of $\Int F$ correspond to objects of $\S(C)$, which are formal expressions of the form
\[c_1 \otimes \cdots \otimes c_k \]
with $k \in \N$ and $c_i \in C$. Thus, the network operad $\CN_F$ is a typed operad where the types are expressions of this form: that is, ordered $k$-tuples of elements of $C$.

Now suppose that $F$ is a one-colored network model, so that $F \maps \S \to \Mon$. Then the objects of $\S$ are simply natural numbers, so $\CN_F$ is an $\N$-typed operad. Given $n_1, \dots, n_k, n \in \N$, we have
\[\CN_F(n_1, \dots, n_k; n) = \hom_{\Int \! F}(n_1 + \cdots + n_k, n). \]
By the definition of the Grothendieck construction, a morphism in this homset is a pair consisting of a bijection $\sigma \maps n_1 + \cdots + n_k \to n$ and an element of the monoid $F(n)$. So, we have
\begin{equation}
\label{eq:operations_in_CN}
 \CN_F(n_1, \dots, n_k; n) = \left\{ 
\begin{array}{cl}  S_n \times F(n) & \textrm{if } n_1 + \cdots n_k = n \\
\emptyset & \textrm{otherwise.} \\
\end{array} \right. 
\end{equation}

Here is the basic example:

\begin{ex}[\textbf{Simple network operad}]
\label{ex:simple_network_operad} 
    If $\SG \maps \S \to \Mon$ is the network model of simple graphs in Ex.\ \ref{ex:simple_graph}, we call $\CN_\SG$ the \define{simple network operad}. By Eq.\ \ref{eq:operations_in_CN}, an operation in $\CN_\SG(n_1, \dots, n_k; k)$ is an element of $S_n$ together with a simple graph having $\mathbf{n} = \{1, \dots, n\}$ as its set of vertices. 
\end{ex}

The operads coming from other one-colored network models work similarly. For example, if $\DG \maps \S \to \Mon$ is the network model of directed graphs from Ex.\ \ref{ex:directed_graph}, then an operation in $\CN_\SG(n_1, \dots, n_k; n)$ is an element of $S_n$ together with a directed graph having $\n$ as its set of vertices.

In Thm.\ \ref{thm:equations} we gave a pedestrian description of one-colored network models. We can describe the corresponding network operads in the same style:

\begin{thm}
\label{thm:one-colored_network_operads}
Suppose $F$ is a one-colored network model. Then the network operad $\CN_F$ is the $\N$-typed operad for which the following hold:
\begin{enumerate}
    \item The sets of operations are given by
    \[\CN_F(n_1, \dots, n_k; n) = \left\{ 
    \begin{array}{cl}  S_n \times F(n) & \textrm{if } 
    n_1 + \cdots n_k = n \\
    \emptyset & \textrm{otherwise.} 
    \end{array}  \right. \]
    \item Composition of operations is given as follows. Suppose that
    \[(\sigma,g) \in S_n \times F(n) = \CN_F(n_1, \dots, n_k; n) \]
    and for $1 \le i \le k$ we have
    \[(\tau_i,h_i) \in S_{n_i} \times F(n_i) =
    \CN_F(n_{i1}, \dots, n_{ij_i}; n_i). \]
    Then 
    \[(\sigma,g) \circ ((\tau_1,h_1), \dots, (\tau_k,h_k)) = (\sigma (\tau_1 + \cdots + \tau_k), g \cup \sigma(h_1 \sqcup \cdots \sqcup h_k)) \]
    where $+$ is defined in Eq.\ \ref{eq:plus}, while $\cup$ and $\sqcup$ are defined in Thm.\ \ref{thm:equations}.
    \item The identity operation in $\CN_F(n;n)$ is 
    $(1,e_n)$, where $1$ is the identity in $S_n$ and $e_n$ is the identity in the monoid $F(n)$.
    \item The right action of the symmetric group $S_k$ on $\CN_F(n_1, \dots, n_k;n)$ is given as follows. Given $(\sigma,g) \in \CN_F(n_1, \dots, n_k;n)$ and $\tau \in S_k$, we have
    \[(\sigma,g) \tau = (\sigma\tau, g) . \]
\end{enumerate}
\end{thm}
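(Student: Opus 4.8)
The plan is to observe that the theorem is a complete unwinding of the definition $\CN_F = \op(\Int F)$ from Def.\ \ref{defn:CN}, combining the strict symmetric Grothendieck construction with the endomorphism-operad construction of Prop.\ \ref{prop:operad_from_symmoncat}. First I would note that since $\S$ is strict symmetric monoidal (Prop.\ \ref{prop:S}), Thm.\ \ref{thm:G_construction_functorial_for_strict_symmetric_cats} guarantees that $\Int F$ is a strict symmetric monoidal category, so Prop.\ \ref{prop:operad_from_symmoncat} legitimately produces the operad $\op(\Int F)$. Because $F$ lands in $\Mon$, each category $F(n)$ has a single object; hence an object of $\Int F$ is just a pair $(n,\ast)$ determined by $n\in\S$, so the type set is $\N$, and a morphism $(n,\ast)\to(n',\ast)$ is a pair $(\sigma,g)$ with $\sigma$ a morphism $n\to n'$ in $\S$ (which exists only when $n=n'$, giving $\sigma\in S_n$) and $g$ a morphism of the one-object category $F(n)$, i.e.\ an element of the monoid $F(n)$. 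This is exactly item (1), already recorded as Eq.\ \ref{eq:operations_in_CN}.

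For item (2) I would feed the operadic composition formula Eq.\ \ref{eq:composition_of_operations} through the two Grothendieck formulas. The tensor $(\tau_1,h_1)\otimes\cdots\otimes(\tau_k,h_k)$ is computed by Eq.\ \ref{eq:tensoring_morphisms_in_G_construction}, using that the tensor on $\S$ is $+$ (Eq.\ \ref{eq:plus}) and that the \laxstructuremap{} $\Phi$ of a one-colored network model is the disjoint union $\sqcup$ (Thm.\ \ref{thm:equations}); this gives $(\tau_1+\cdots+\tau_k,\ h_1\sqcup\cdots\sqcup h_k)$, a morphism $n\to n$ in $\Int F$. Composing the outer operation $(\sigma,g)$ with this inner tensor via the Grothendieck composition Eq.\ \ref{eq:composition}, and identifying the monoid multiplication in $F(n)$ with $\cup$ and the functorial action $F(\sigma)$ with $\sigma$, I obtain $(\sigma(\tau_1+\cdots+\tau_k),\ g\cup\sigma(h_1\sqcup\cdots\sqcup h_k))$, which is precisely the claimed formula.

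Items (3) and (4) are handled similarly. The operadic identity is the identity morphism of $\Int F$ on the object $(n,\ast)$, namely $(1_n,1_\ast)=(1,e_n)$, since the identity morphism of the one-object monoid $F(n)$ is its unit $e_n$; this is item (3). For the right $S_k$-action, Prop.\ \ref{prop:operad_from_symmoncat} defines it by precomposing with the symmetry isomorphism of $\Int F$ permuting the $k$ tensor factors. I would use that the braiding in $\Int F$, coming from the braided Grothendieck construction, has the form $(B,1)$ with trivial second component; a short check (using $\sigma e_n=e_n$ and $g\cup e_n=g$, equations (4) and (1) of Thm.\ \ref{thm:equations}) shows that the second component of any composite of such symmetries stays $e_n$, so the permutation morphism is $(\tau_*,e_n)$ with $\tau_*\in S_n$ the block permutation realizing $\tau$. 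Composing $(\sigma,g)$ with $(\tau_*,e_n)$ then gives $(\sigma\tau_*,\ g\cup\sigma(e_n))=(\sigma\tau_*,g)$, which is item (4) under the standard abbreviation writing $\tau$ for the block permutation $\tau_*$.

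The main obstacle is bookkeeping rather than mathematics: correctly matching the composition order (operadic composition is the outer operation applied after the tensored inner operations, which must be aligned with the source-to-target convention in Eq.\ \ref{eq:composition}), and correctly identifying the three pieces of data $\Phi=\sqcup$, the monoid multiplication $=\cup$, and the functorial action $=\sigma$. The one genuinely fiddly point is item (4): I must verify that the symmetry of $\Int F$ has trivial second component and that precomposition collapses it via the monoid-unit laws, and I must keep track of the translation between the abstract permutation $\tau\in S_k$ and the concrete block permutation it induces in $S_n$.
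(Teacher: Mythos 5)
Your proposal is correct and follows essentially the same route as the paper's proof: applying Prop.\ \ref{prop:operad_from_symmoncat} to $\Int F$, reading off item (1) from Eq.\ \ref{eq:operations_in_CN}, item (2) from Eqs.\ \ref{eq:composition_of_operations}, \ref{eq:composition} and \ref{eq:tensoring_morphisms_in_G_construction}, and items (3) and (4) from the identities and braiding of $\Int F$. You simply unwind these references in more explicit detail (e.g.\ the check that composites of braidings keep trivial second component) than the paper's terser proof does.
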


\begin{proof}
To prove these we apply Prop.\ \ref{prop:operad_from_symmoncat}, which describes the operad $\op(\C)$ coming from a strict symmetric monoidal category $\C$, to the case $\C = \Int F$. Item 1 is simply Eq.\ \ref{eq:operations_in_CN}. To prove item 2 we first use Eq.\ \ref{eq:composition_of_operations}, which defines composition of operations in $\op(\C)$ in terms of composition and tensoring of morphisms in $\C$. Then we use Eq.\ \ref{eq:composition}, which says how to compose morphisms in $\Int F$, and Eq.\ \ref{eq:tensoring_morphisms_in_G_construction}, which says how to tensor them. Item 3 comes from how identity operations in $\op(C)$ and identity morphisms in $\Int F$ are defined. Similarly, item 4 comes from how the symmetric group actions in $\op(C)$ and the braiding in $\Int F$ are defined.
\end{proof}

The construction of operads from symmetric monoidal categories described in Prop.\ \ref{prop:operad_from_symmoncat} is functorial, so the construction of operads from network models is as well. To discuss this functoriality we need a couple of categories.  The first is $\SSMC$, defined in Def.\
\ref{defn:SSMC}.  Second:

\begin{defn}
\label{defn:category_of_operads} 
Let $\Op$ be the category with typed operads as objects and with a morphism from the $T$-typed operad $O$ to the $T'$-typed operad $O'$ being a function $F \maps T \to T'$ together with maps
\[F \maps O(t_1, \dots, t_n ; t) \to 
O'(F(t_1), \dots, F(t_n); F(t)) \]
preserving the composition of operations, identity operations and the symmetric group actions.
\end{defn}

\begin{prop}
\label{prop:functoriality_of_operads_from_ssmcs}
There exists a unique functor $\op \maps \SSMC \to \Op$ defined on objects as in Prop.\ \ref{prop:operad_from_symmoncat} 
and sending any strict symmetric monoidal functor $F \maps \C \to \C'$ to the  operad morphism $\op(F) \maps \op(\C) \to \op(\C')$ that acts by $F$ on types and also on operations:
\[\op(F) = F \maps 
\hom_C(c_1 \otimes \cdots \otimes c_n, c) \to \hom_{\C'}(F(c_1) \otimes \cdots \otimes F(c_n), F(c)). \] 
\end{prop}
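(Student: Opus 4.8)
The plan is to verify, in order, that $\op(F)$ is a well-defined morphism of operads for each strict symmetric monoidal functor $F \maps \C \to \C'$, that this assignment respects identities and composition, and that the prescribed data forces the functor to be unique. First I would check that $\op(F)$ even makes sense. The type function is simply $F$ restricted to objects, $\Ob(\C) \to \Ob(\C')$. For the map on operations the crucial point is strictness: because $F$ is a \emph{strict} symmetric monoidal functor we have $F(c_1 \otimes \cdots \otimes c_n) = F(c_1) \otimes \cdots \otimes F(c_n)$ on the nose, so applying $F$ to a morphism $f \maps c_1 \otimes \cdots \otimes c_n \to c$ yields a morphism $F(f) \maps F(c_1) \otimes \cdots \otimes F(c_n) \to F(c)$, which is exactly an element of $\op(\C')(F(c_1), \dots, F(c_n); F(c))$. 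No coherence isomorphisms need to be inserted, and this is what makes the definition clean.

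Next I would verify that $\op(F)$ preserves the operad structure described in Prop.\ \ref{prop:operad_from_symmoncat}. For composition I would combine the defining formula (Eq.\ \ref{eq:composition_of_operations}) with functoriality and strict monoidality of $F$:
\[ \op(F)\bigl(f \circ (g_1, \dots, g_k)\bigr) = F\bigl(f \circ (g_1 \otimes \cdots \otimes g_k)\bigr) = F(f) \circ \bigl(F(g_1) \otimes \cdots \otimes F(g_k)\bigr), \]
which is precisely $\op(F)(f) \circ (\op(F)(g_1), \dots, \op(F)(g_k))$. Identity operations are preserved because identities in the operad are identity morphisms and $F$ is a functor, so $F(1_c) = 1_{F(c)}$. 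The symmetric group actions are preserved because those actions are built from the braiding in $\C$, and a strict symmetric monoidal functor satisfies $F(B_{c,d}) = B'_{F(c),F(d)}$; hence $F$ intertwines the permutation morphisms used to define the $S_k$-action on $k$-ary operations.

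Functoriality of $\op$ itself is then immediate: $\op(1_\C)$ acts as the identity on both types and operations, so it is the identity operad morphism, and both $\op(G) \circ \op(F)$ and $\op(G \circ F)$ act as $G \circ F$ on types and operations, hence coincide. Uniqueness follows because the proposition prescribes the action of $\op$ on objects (via Prop.\ \ref{prop:operad_from_symmoncat}) and the action of each $\op(F)$ on both types and operations, and this data determines the functor completely.

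The main obstacle — really the only nonroutine point — is ensuring that strictness is invoked in exactly the right places: it is what guarantees that $F(f)$ lands in the correct homset and what makes $F$ preserve the braiding-defined symmetric actions. If $F$ were only strong or lax symmetric monoidal, $\op(F)$ would fail to be well-defined without inserting the structure isomorphisms, so the strictness hypothesis built into $\SSMC$ is essential rather than cosmetic.
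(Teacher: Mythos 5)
Your proposal is correct and matches the paper's approach: the paper's proof is literally ``This is a straightforward verification,'' and what you have written is exactly that verification carried out in full, with strictness of $F$ invoked in the right places (well-definedness of the homset assignment, preservation of the tensor-defined composition, and preservation of the braiding-defined symmetric group actions). Nothing in your argument deviates from or adds assumptions beyond what the paper intends.
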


\begin{proof} This is a straightforward verification.
\end{proof}

\begin{thm} 
\label{thm:O}
There exists a unique 
functor
\[\CN \maps \NM \to \Op \]
sending any network model $F \maps \S(C) \to \Cat$ to the operad $\CN_F = \op(\Int G)$ and any morphism of network models $(G,\gn) \maps (C,F) \to (C',F' G')$ to the morphism of operads $\CN_G = \op(\Ghat)$.
\end{thm}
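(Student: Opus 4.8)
The plan is to exhibit $\CN$ as a composite of three functors already available in the excerpt, namely
\[ \NM \xrightarrow{\;J\;} \SSMICat \xrightarrow{\;\Int\;} \SSMC \xrightarrow{\;\op\;} \Op, \]
where $\Int$ is the strict symmetric Grothendieck construction of Thm.~\ref{thm:G_construction_functorial_for_strict_symmetric_cats} and $\op$ is the functor of Prop.~\ref{prop:functoriality_of_operads_from_ssmcs}. The only piece not yet named is the functor $J$, which realizes $\NM$ inside $\SSMICat$ by replacing a color set $C$ with the free strict symmetric monoidal category $\S(C)$. Since a composite of functors is a functor, and since $J$ will be arranged so that its values on objects and morphisms are exactly those prescribed in the statement, existence of $\CN$ reduces to constructing $J$; uniqueness is then automatic, because the statement already pins $\CN$ down on every object and every morphism of $\NM$.

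To build $J$, I would first record that the assignment $C \mapsto \S(C)$, $(f\maps C\to C') \mapsto (f_*\maps \S(C)\to\S(C'))$ is functorial from sets to $\SSMC$: both $(f'\circ f)_*$ and $f'_*\circ f_*$ are strict symmetric monoidal functors agreeing with $f'\circ f$ on the one-fold tensor generators, so they coincide by the uniqueness clause of Prop.~\ref{prop:free}, and likewise $(\mathrm{id}_C)_* = \mathrm{id}_{\S(C)}$. I would then set $J(C,F) = (\S(C),F)$ and $J(g,\gn) = (g_*,\gn)$. This lands in $\SSMICat$: $\S(C)$ is strict symmetric monoidal by Prop.~\ref{prop:free}, $g_*$ is a strict symmetric monoidal functor, and $\gn\maps F \To F'\circ g_*$ is the required natural transformation. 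Here I would remark that a monoidal natural transformation between symmetric monoidal functors automatically respects the symmetries, so the datum demanded by Def.~\ref{defn:nm} is precisely the datum demanded by Def.~\ref{defn:SSMICat}. Functoriality of $J$ then follows by comparing composition laws: the composite in $\NM$ (Def.~\ref{defn:nm}) has component $\hn_x = \gn'_{g_*(x)}\circ\gn_x$, which is exactly the whiskering-plus-vertical-composition defining composition in $\SSMICat$, once the identity $(g'\circ g)_* = g'_*\circ g_*$ established above is used; identities match for the same reason.

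Finally I would set $\CN = \op\circ\Int\circ J$ and read off its effect. On an object $(C,F)$ it gives $\op(\Int F) = \CN_F$ as required, and on a morphism $(g,\gn)$ it gives $\op(\widehat{g_*})$, matching $\op(\Ghat)$ in the notation of the statement with $\Ghat = \widehat{g_*}$. I expect no single deep obstacle; the one point needing genuine care is verifying that $J$ is functorial, that is, that the bespoke whiskered composition law of $\NM$ in Def.~\ref{defn:nm} is transported correctly onto the standard composition of $\SSMICat$. Everything else is bookkeeping resting on the universal property of $\S(C)$ and on the already-proven functoriality of $\Int$ and $\op$.
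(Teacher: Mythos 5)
Your proposal is correct and follows essentially the same route as the paper: the paper's proof also obtains $\CN$ by composing the strict symmetric monoidal Grothendieck construction of Thm.~\ref{thm:G_construction_functorial_for_strict_symmetric_cats}, restricted to $\NM$, with the functor $\op \maps \SSMC \to \Op$ of Prop.~\ref{prop:functoriality_of_operads_from_ssmcs}. The only difference is that you make explicit, via your functor $J$ and the identity $(g'\circ g)_* = g'_*\circ g_*$ from Prop.~\ref{prop:free}, the embedding of $\NM$ into $\SSMICat$ that the paper leaves implicit in the word ``restricting.''
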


\begin{proof}
    There is a functor 
    \[\textstyle{\Int} \maps \NM \to \SSMC \]
    given by restricting the strict symmetric monoidal Grothendieck construction of Thm.\ \ref{thm:G_construction_functorial_for_strict_symmetric_cats} to $\NM$. Composing this with the functor
    \[\op \maps \SSMC \to \Op  \]
    constructed in Prop.\ \ref{prop:functoriality_of_operads_from_ssmcs} we obtain a functor $\CN \maps \NM \to \Op$ with the properties stated in the theorem.  Since these properties specify how $\CN$ acts on objects and morphisms, it is unique.
\end{proof}

\section{Algebras of network operads}
\label{sec:algebras}

Our interest in network operads comes from their use in designing and tasking networks of mobile agents. The operations in a network operad are ways of assembling larger networks of a given kind from smaller ones. To describe how these operations act in a concrete situation we need to specify an algebra of the operad. The flexibility of this approach to system design takes advantage of the fact that a single operad can have many different algebras, related by homomorphisms. We have already discussed these ideas elsewhere \cite{CommNet, CompTask}, and plan to write a more detailed treatment, so here we simply describe a few interesting algebras of network operads. 

Recall from the introduction that an algebra $A$ of a typed operad $O$ specifies a set $A(t)$ for each type $t \in T$ such that the operations of $O$ can be applied to act on these sets. That is, each algebra $A$ specifies: 

\begin{itemize}
    \item for each type $t \in T$, a set $A(t)$, and
    \item for any types $t_1, \dots, t_n, t \in T$, a function
    \[\alpha \maps O(t_1, \dots, t_n;t) \to \hom(A(t_1) \times \cdots \times A(t_n), A(t)) \] 
\end{itemize}
obeying some rules that generalize those for the action of a monoid on a set \cite{Yau}. All the examples in this section are algebras of network operads constructed from one-colored network models $F \maps \S \to \Mon$. This allows us to use Thm.\ \ref{thm:one-colored_network_operads}, which describes $\CN_F$ explicitly.

The most basic algebra of such a network operad $\CN_F$ is its `canonical algebra', where it acts on the kind of network described by the network model $F$:

\begin{ex}[\textbf{The canonical algebra}]
\label{ex:canonical_algebra}
Let $F \maps \S \to \Mon$ be a one-colored network model. Then the operad $\CN_F$ has a \define{canonical algebra} $A_F$ with
\[A_F(n) = F(n) \]
for each $n \in N$, the type set of $\CN_F$.
In this algebra any operation
\[(\sigma,g) \in  \CN_F(n_1, \dots , n_k; n) = 
S_n \times F(n) \] 
acts on a $k$-tuple of elements
\[h_i \in A_F(n_i) = F(n_i)   \qquad (1 \le i \le k)\]
to give
\[\alpha(\sigma,g)(h_1, \dots, h_k) =  g \cup \sigma(h_1 \sqcup \cdots \sqcup h_k) \in A(n) .\]
Here we use Thm.\ \ref{thm:equations}, which gives us the ability to overlay networks using the monoid structure $\cup \maps F(n) \times F(n) \to F(n)$, take their `disjoint union' using maps $\sqcup \maps F(m) \times F(m') \to F(m + m')$, and act on $F(n)$ by elements of $S_n$. Using the equations listed in this theorem one can check that $\alpha$ obeys the axioms of an operad algebra.
\end{ex}

When we want to work with networks that have more properties than those captured by a given network model, we can equip elements of the canonical algebra with extra attributes. Three typical kinds of network attributes are vertex attributes, edge attributes, and `global network' attributes. For our present purposes, we focus on vertex attributes. Vertex attributes can capture internal properties (or states) of agents in a network such as their locations,
capabilities, performance characteristics, etc.

\begin{ex}[\textbf{Independent vertex attributes}]
\label{ex:vertex_attribute_algebra}
   For any one-colored network model $F\maps \S \to \Mon$ and any set $X$, we can form an algebra $A_X$ of the operad $\O_F$ that consists of networks whose vertices have attributes taking values in $X$. To do this, we define 
   \[A_X(n) = F(n) \times X^n  .\]
   In this algebra, any operation
   \[(\sigma,g) \in  \CN_F(n_1, \dots , n_k; n) = 
    S_n \times F(n) \] 
   acts on a $k$-tuple of elements
   \[(h_i,x_i) \in F(n_i) \times X^{n_i} \qquad (1 \le i \le k) \]
   to give
   \[\alpha_X(\sigma,g) = (g \cup \sigma(h_1 \sqcup \cdots \sqcup h_k), \sigma(x_1, \dots, x_k)). \]
   Here $(x_1, \dots, x_k) \in X^n$
   is defined using the canonical bijection
   \[X^n \cong \prod_{i=1}^k X^{n_i} \]
   when $n_1 + \cdots + n_k = n$, and $\sigma \in S_n$ acts on $X^n$ by permutation of coordinates. In other words,  $\alpha_X$ acts via $\alpha$ on the  $F(n_i)$ factors while permuting the vertex attributes $X^n$ in the same way that the vertices of the network $h_1 \sqcup \cdots \sqcup h_k$ are permuted.

   One can easily check that the projections 
   $F(n) \times X^n \to F(n)$ 
   define a homomorphism of $\CN_F$-algebras, which we call
    \[\pi_X \maps A_X \to A  .\]
    This homomorphism `forgets the vertex attributes' taking values in the set $X$.
\end{ex}

\begin{ex}[\textbf{Simple networks with a rule obeyed by edges}]
\label{ex:edge_exception_algebra}
Let $\CN_\SG$ be the simple network operad as explained in Ex.\ \ref{ex:simple_network_operad}. We can form an algebra of the operad $\CN_\SG$ that consists of simple graphs whose vertices have attributes taking values in some set $X$, but where an edge is permitted between two vertices only if their attributes obey some condition. We specify this condition using a symmetric function 
\[p \maps X\times X \to \Boole \] 
where $\Boole = \{F,T\}$. An edge is not permitted between vertices with attributes $(x_1, x_2) \in X \times X$ if this function evaluates to $F$.

To define this algebra, which we call $A_p$, we let $A_p(n) \subseteq \SG(n) \times X^n$ be the set of pairs $(g,x)$ such that for all edges $\{i,j\} \in g$ the attributes of the vertices $i$ and $j$ make $p$ true:
\[p(x(i), x(j)) = T .\]
There is a function 
\[\tau_p \maps A_X(n) \to A_p(n) \]
that discards edges $\{i,j\}$ for which $p(x(i),x(j)) = F$.  Recall that $A_X(n) = \SG(n) \times X^n$, and recall from Ex.\ \ref{ex:simple_graph_2} that we can regard $\SG(n)$ as the set of functions $g \maps \E(n) \to \Boole$.   Then we define $\tau_p$ by
\[ \tau_p(g,x) = (\overline{g},x) \]
where
\[
    \overline{g}\{i,j\} = 
    \left\{  \begin{array}{ccl} g\{i,j\} & \textrm{if} & p(x(i),x(j)) = T \\ \\
    F & \textrm{if} & p(x(i), x(j)) = F.
    \end{array} \right.
\]

We can define an action $\alpha_p$ of $\CN_\SG$ on the sets $A_p(n)$ with the help of this function. Namely, we take $\alpha_p$ to be the composite
\[\begin{tikzcd}
\CN_\SG(n_1, \dots, n_k ; n) \times A_p(n_1) \times \cdots \times A_p(n_k)  \arrow[d, hookrightarrow]
\\
\CN_\SG(n_1, \dots, n_k ; n) \times A_X(n_1) \times \cdots \times A_X(n_k)  \arrow[d, "\alpha_X"]
\\
A_X(n)\arrow[d,  "\tau_p"]
\\
A_p(n)
\end{tikzcd}\]
where the action $\alpha_X$ was defined in Ex.\ \ref{ex:vertex_attribute_algebra}. One can check that $\alpha_p$ makes the sets $A_p(n)$ into an algebra of $\CN_\SG$, which we call $A_p$. One can further check that the maps $\tau$ define a homomorphism of $\CN_\SG$-algebras, which we call
\[\tau_p \maps A_X \to A_p  .\]
\end{ex}

\begin{ex}[\textbf{Range-limited networks}]
\label{ex:range_limit_algebra}
We can use the previous examples to model range-limited communications between entities in a plane. First, let $X = \R^2$ and form the algebra $A_X$ of the simple network operad $\CN_\SG$. Elements of $A_X(n)$ are simple graphs with vertices in the plane. 

Then, choose a real number $L \ge 0$ and let $d$ be the usual Euclidean distance function on the plane. Define $p \maps X \times X \to \Boole$ by setting $p(x, y)=T$ if $d(x,y) \le L$ and $p(x,y) = F$ otherwise. Elements of $A_p(n)$ are simple graphs with vertices in the plane such that no edge has length greater than $L$.
\end{ex}
   
\begin{ex}[\textbf{Networks with edge count limits}]
\label{ex:edge_count_algebra}
    Recall the network model for multigraphs $\MGplus$, defined in Ex.\ \ref{ex:multigraph} and clarified in Ex.\ \ref{ex:multigraph_2}. An element of $\MGplus(n)$ is a multigraph on the set $\n$, namely a function $g \maps \E(n) \to \N$ where $\E(n)$ is the set of 2-element subsets of $\n$.
    If we fix a set $X$ we obtain an algebra $A_X$ of $\CN_{\MGplus}$ as in Ex.\ \ref{ex:vertex_attribute_algebra}. The set $A_X(n)$ consists of multigraphs on $\n$ where the vertices have attributes taking values in $X$. 
    
    Starting from $A_X$ we can form another algebra  where there is an upper bound on how many edges are allowed between two vertices, depending on their attributes. We specify this upper bound using a symmetric function
    \[b \maps X \times X \to \N. \]
    
    To define this algebra, which we call $A_b$, let
    $A_b(n) \subseteq \MGplus(n) \times X^n$ be the set of pairs $(g,x)$ such that for all $\{i,j\} \in \E(n)$ we have 
    \[g(i,j) \le b(x(i), x(j)) .\]
    Much as in Ex.\ \ref{ex:edge_exception_algebra} there is function 
    \[\pi \maps A_X(n) \to A_b(n) \]
    that enforces this upper bound: for each $g \in A_X(n)$ its image  $\pi(g)$ is obtained by reducing the number of edges between vertices $i$ and $j$ to the minimum of $g(i,j)$ and $\beta(i,j)$:
    \[\pi(g)(i,j) = g(i,j) \min \beta(i,j) .\]
    We can define an action $\alpha_b$ of $\CN_\MG$ on the sets $A_b(n)$ as follows:
    \[\begin{tikzcd}
    \CN_\MG(n_1, \dots, n_k ; n) \times A_p(n_1) \times \cdots \times A_p(n_k)  \arrow[d, hookrightarrow]
    \\
    \CN_\MG(n_1, \dots, n_k ; n) \times A_X(n_1) \times \cdots \times A_X(n_k)  \arrow[d, "\alpha_X"]
    \\
    A_X(n)\arrow[d,  "\pi"]
    \\
    A_p(n)
    \end{tikzcd}\]
    One can check that $\alpha_b$ indeed makes the sets $A_b(n)$ into an algebra of $\CN_\MGplus$, which we call $A_b$, and that the maps $\pi_p$ define a homomorphism of $\CN_\MGplus$-algebras, which we call
    \[\pi_p \maps A_X \to A_b  .\]
\end{ex}
    
\begin{ex}[\textbf{Range-limited networks, revisited}]
\label{ex:range_limit_algebra_2}
We can use Ex.\ \ref{ex:edge_count_algebra} to model entities in the plane that have two types of communication channel, one of which has range $L_1$ and another of which has a lesser range $L_2 < L_1$. To do this, take $X = \R^2$ and define $b \maps X \times X \to \N$ by 
\[b(x, y)= \left\{ \begin{array}{cl}
0 & \textrm{if } d(x,y) > L_1 \\
1 & \textrm{if } L_2 < d(x,y) \le L_1  \\
2 & \textrm{if } d(x,y) \le L_2 
\end{array}  \right.
\]
Elements of $A_b(n)$ are multigraphs with vertices in the plane having no edges between vertices whose distance is $> L_1$, at most one edge between vertices whose distance is $\le L_1$ but $> L_2$, and at most two edges between vertices whose distance is $\le L_2$. 

Moreover, the attentive reader may notice that the action $\alpha_b$ of $\CN_\MGplus$ for this specific choice of $b$ factors through an action of $\CN_{\G_{\Boole_2}}$, where $\G_{\Boole_2}$ is the network model defined in Ex.\ \ref{ex:multigraph_with_at_most_k}. That is, operations $\CN_{\G_{\Boole_2}}(n_1, \dots , n_k; n) = S_n \times \G_{\Boole_2}(n)$ where
$\G_{\Boole_2}(n)$ is the set of multigraphs on $\n$ \emph{with at most $2$ edges between vertices} are sufficient to compose these range-limited networks. This is due to the fact that the values of this $b \maps X \times X \to \N$ are at most 2. 
\end{ex}

These examples indicate that vertex attributes and constraints can be systematically added to the canonical algebra to build more interesting algebras, which are related by homomorphisms. Ex.\ \ref{ex:vertex_attribute_algebra} illustrates how adding extra attributes to the networks in some algebra $A$ can give networks that are elements of an algebra $A'$ equipped with a homomorphism $\pi \maps A' \to A$ that forgets these extra attributes. Ex.\ \ref{ex:edge_count_algebra} illustrates how imposing extra constraints on the networks in some algebra $A$ can give an algebra $A'$ equipped with a homomorphism $\tau \maps A \to A'$ that imposes these constraints: this works only if there is a well-behaved systematic procedure, defined by $\tau$, for imposing the constraints on any element of $A$ to get an element of $A'$.

The examples given so far scarcely begin to illustrate the rich possibilities of network operads and their algebras.  Their connection to Petri nets is explored in \cite{Catalysts}, but there is much more to do.

In particular, it is worth noting that all the specific examples of network models described here involve commutative monoids. However, noncommutative monoids are also important. Suppose, for example, that we wish to model entities with a limited number of point-to-point communication interfaces---e.g., devices with a finite number $p$ of USB ports. More formally, we wish to act on sets of degree-limited networks $A_{\rm deg} (n)\subset \SG(n) \times \N^n$ made up  of pairs $(g, p)$ such that the degree of each vertex $i$, ${\rm deg}(i),$ is at most the degree-limiting attribute of $i$: ${\rm deg}(i) \le p(i)$. Na\"ively, we might attempt to construct a map $\tau_{\rm deg} \maps A_\N \to A_{\rm deg}$ as in Ex.\ \ref{ex:edge_count_algebra} to obtain an action of the simple network operad $\CN_\SG$. However, this is turns out to be impossible. For example, if attempt to build a network from devices with a single USB port, and we attempt to connect multiple USB cables to one of these devices, the relevant network operad must include a rule saying which attempts, if any, are successful. Since we cannot prioritize links from some vertices over others---which would break the symmetry built into any network model---the order in which these attempts are made must be relevant. Since the monoids $\SG(n)$ are commutative, they cannot capture this feature of the situation.

The solution is to use a class of noncommutative monoids dubbed `graphic monoids' by Lawvere \cite{Law}: namely, those that obey the identity $aba =ab$. These allow us to construct a one-colored network model $\Gamma \maps \S \to \Mon$ whose network operad $\CN_\Gamma$ acts on $A_{\rm deg}$. For our USB device example, the relation $aba = ab$ means that first attempting to connect some USB cables between some devices ($a$), second attempting to connect some further USB cables ($b$), and third attempting to connect some USB cables \emph{precisely as attempted in the first step} ($a$, again) has the same result as only performing the first two steps ($ab$).

In fact, one-colored network models constructed from graphic monoids appear to be sufficient to model a wide array of constraints on the structural design and behavioral tasking of agents. For more on network models arising from noncommutative monoids, see \cite{Moeller}. 

\subsection*{Acknowledgements}

This work was supported by the DARPA Complex Adaptive System Composition and Design Environment (CASCADE) project. We thank Chris Boner, Tony Falcone, Marisa Hughes, Joel Kurucar, Tom Mifflin, John Paschkewitz, Thy Tran and Didier Vergamini for many helpful discussions. Christina Vasilakopoulou provided crucial assistance with Sec.\ \ref{subsec:mon_fib}. JB also thanks the Centre for Quantum Technologies, where some of this work was done.

\end{document}